\newcommand{\Z}{\mathbb{Z}}
\newcommand{\Q}{\mathbb{Q}}
\newcommand{\ord}{\mathrm{ord}}
\newtheorem{definition}{Definition}
\newtheorem*{theorem*}{Theorem}
\newtheorem{theorem}{Theorem}
\newtheorem{lemma}{Lemma}
\newtheorem{corollary}{Corollary}
\newtheorem{proposition}{Proposition}
\newtheorem{conjecture}{Conjecture}
\newtheorem{problem}{Problem}
\newtheorem{claim}{Claim}
\newtheorem{algo}{Algorithm}
\theoremstyle{remark}
\newtheorem{remark}{Remark}
\begin{document}
\title[$S$-Diophantine quadruples]{On the existence of $S$-Diophantine quadruples}
\subjclass[2010]{11D61, 11D45}
\keywords{Diophantine equations; $S$-unit equations; Diophantine tuples; $S$-Diophantine quadruples.}

\author[V. Ziegler]{Volker Ziegler}
\address{V. Ziegler,\\
Institute of Mathematics,\\
University of Salzburg,\\
Hellbrunnerstrasse 34/I,\\
A-5020 Salzburg, Austria}
\email{volker.ziegler\char'100sbg.ac.at}

\begin{abstract}
Let $S$ be a set of primes. We call an $m$-tuple $(a_1,\ldots,a_m)$ of distinct, positive integers $S$-Diophantine, if for all $i\neq j$ the integers $s_{i,j}:=a_ia_j+1$
have only prime divisors coming from the set $S$, i.e. if all $s_{i,j}$ are $S$-units. In this paper, we show that no $S$-Diophantine quadruple (i.e.~$m=4$) exists if
$S=\{3,q\}$. Furthermore we show that for all pairs of primes $(p,q)$ with $p<q$ and $p\equiv 3\mod 4$ no $\{p,q\}$-Diophantine quadruples exist, provided
that $(p,q)$ is not a Wieferich prime pair.
\end{abstract}


\maketitle

\section{Introduction}\label{Sec:Intro}

An $m$-tuple $(a_1,\ldots,a_m)$ of positive, distinct integers is called Diophantine, if 
\begin{equation}\label{Eq:Dioph}
a_ia_j+1=\square
\end{equation}
for $i\not=j$. Diophantine $m$-tuples have been studied since ancient times by
several authors. It was for a long time an open problem whether Diophantine quintuples exist and many mathematicians such as Fermat, Euler and in modern times
Baker and Davenport \cite{Baker:1969}, Peth\H{o} and Dujella \cite{Dujella:1998}
and Dujella \cite{Dujella:2004} investigated this problem. Only recently this problem was finally settled by He et.al.~\cite{He:2018} who showed that no Diophantine quintuples exist.

Also several variants of Diophantine tuples were considered by several authors. For instance, Bugeaud and Dujella \cite{Bugeaud:2003} considered the case where $\square$ is replaced by a $k$-th power in
\eqref{Eq:Dioph} and Luca and Szalay \cite{Luca:2008} considered the case where $\square$ is replaced by Fibonacci numbers. For an overview of all variants see \cite{Dujella:HP}.
In this paper we consider the following variant of Diophantine tuples. Let $S$ be a fixed (usually finite) set of primes. We call an $m$-tuple $(a_1,\ldots,a_m)$ of
distinct, positive integers $S$-Diophantine, if for all $i\neq j$ the integers $s_{i,j}:=a_ia_j+1$
have only prime divisors coming from the set $S$, i.e. if all $s_{i,j}$ are $S$-units. In view of classical Diophantine tuples the following question arises:

\begin{problem}
Let $S$ be a fixed finite set of primes. How large can a $S$-Diophantine tuple get? 
\end{problem}

This question has already been studied in a series of papers by Szalay and the author \cite{Szalay:2013a,Szalay:2013b,Szalay:2015}
and it is planed to continue this investigations in this paper.

In a slightly other context this problem was already studied by 
Gy\H{o}ry, S\'ark\"ozy and Stewart~\cite{Gyory:1996} who considered products of the form
\[\Pi(A,B)=\prod_{a\in A, b\in B}(ab+1),\]
where $A$ and $B$ are given sets of positive integers. They found lower bounds for the number of prime factors of $\Pi(A,B)$ in terms of $|A|$ and $|B|$. In particular, they showed that the number of
prime factors of $\Pi(A,A)$ exceeds $C \log |A|$, where $C$ is a positive, effectively computable constant, provided that $|A|\geq 3$. They also conjectured that the largest prime factor of
$(ab+1)(ac+1)(bc+1)$
tends to infinity as $\max\{a,b,c\}\rightarrow \infty$. A weaker form, namely that the largest prime factor of
$$(ab+1)(ac+1)(bd+1)(cd+1)$$
tends to infinity as $\max\{a,b,c,d\}\rightarrow \infty$, was proved by Stewart and Tijdeman \cite{Stewart:1997} and the full conjecture was proved independently 
by Corvaja and Zannier \cite{Corvaja:2003} and Hern{\'a}ndez and Luca \cite{Hernandez:2003}. In the context of $S$-Diophantine tuples the results of Corvaja, Zannier, Hern{\'a}ndez and Luca imply
that for a fixed, finite set of primes $S$ only finitely many $S$-Diophantine triples exist.

Of course for large, finite sets of primes $S$ also $S$-Diophantine $m$-tuples will exist for large $m$. Thus the following function introduced in \cite{Szalay:2013a} is of
special interest. Let $s(k)$ be the smallest integer $m$ such that for all sets of primes $S$ with $|S|=k$ no $S$-Diophantine $m$-tuple
exists. The results due to Gy\H{o}ry et.al. \cite{Gyory:1996} implies that such an $m$ exists for all $k$. In particular, their result \cite[Theorem 1 resp. Corollary 2]{Gyory:1996} yields the upper bound
$s(k)<\exp(Ck)$, where $C$ is an effectively computable absolute constant. 
On the other hand it is easy to show that $s(1)=3$ (see e.g. Lemma~\ref{lem:tripel} below). But, the exact values for $s(2)$ or any other $s(k)$ are yet unknown.
However we conjecture that $s(2)=4$. In other words we conjecture:

\begin{conjecture}\label{con:S-quadruple}
 Let $p<q$ be primes and $S=\{p,q\}$. Then no $S$-Diophantine quadruple exists.
\end{conjecture}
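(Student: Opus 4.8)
The plan is to prove the conjecture for \emph{all} pairs $p<q$ by translating a hypothetical quadruple into an overdetermined system of $S$-unit equations and then forcing a contradiction from the fact that these $S$-units are pinned too close together. Write the quadruple as $a_1<a_2<a_3<a_4$ and set $s_{ij}=a_ia_j+1=p^{x_{ij}}q^{y_{ij}}$ for $i<j$, so all six $s_{ij}$ are $S$-units. The engine is the product identities
\[
s_{12}s_{34}-s_{13}s_{24}=(a_1-a_4)(a_2-a_3),\qquad
s_{12}s_{34}-s_{14}s_{23}=(a_1-a_3)(a_2-a_4),
\]
and their difference $s_{13}s_{24}-s_{14}s_{23}=(a_1-a_2)(a_3-a_4)$. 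Putting $U=s_{12}s_{34}$, $V=s_{13}s_{24}$, $W=s_{14}s_{23}$ one checks $U>V>W$, and since each right-hand side has size $O(a_3a_4)$ while $U,V,W\asymp a_1a_2a_3a_4$, the three distinct $S$-units $U,V,W$ all lie within a factor $1+O\!\left(1/(a_1a_2)\right)$ of one another. A quadruple therefore forces three distinct $\{p,q\}$-units to be extraordinarily close, and the whole proof consists in ruling this out uniformly in $(p,q)$.

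First I would extract elementary congruence information, which has the advantage of being uniform in the pair. Each ratio $U/V,\,U/W,\,V/W$ is again of the shape $p^{X}q^{Y}$, so closeness reads $p^{X}q^{Y}=1+\delta$ with $|\delta|\asymp 1/(a_1a_2)$. Since every $s_{ij}$ is odd (when $p,q$ are odd) at most one $a_i$ is odd, so at least three $s_{ij}$ are $\equiv 1\pmod 8$; reducing the equations modulo $4$, $8$ and modulo $p^2,q^2$ then constrains the parities of the $x_{ij},y_{ij}$ through the residues of $p,q$. When $p$ or $q$ is $\equiv 3\pmod 4$ this already forces certain exponents to be even, so $U,V,W$ become perfect squares up to a bounded factor and the difference identities degenerate to a Pell-type equation that can be solved directly. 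The genuinely new ingredient the conjecture demands is a substitute obstruction when $p\equiv q\equiv 1\pmod 4$ and when $p=2$, where mod-$4$ information disappears; here the plan is to replace it by reduction modulo an auxiliary prime $\ell$ in which $p,q$ have small multiplicative order, using the power-residue symbols $\left(\tfrac{p}{\ell}\right),\left(\tfrac{q}{\ell}\right)$ to recover a parity/divisibility congruence on the exponents.

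Independently of the residue class, I would feed $p^{X}q^{Y}=1+\delta$ into a lower bound for linear forms in two logarithms (Laurent--Mignotte--Nesterenko, or the explicit $S$-unit bounds of Győry--Yu), giving $|X\log p+Y\log q|\gg (\log p\log q)^{-1}\max(|X|,|Y|)^{-\kappa}$, and compare it with the upper bound $|\log(U/V)|\ll 1/(a_1a_2)$. Because $a_1a_2<s_{12}=p^{x_{12}}q^{y_{12}}$, this confrontation bounds $\max(x_{ij},y_{ij})$, hence bounds every $s_{ij}$ and therefore every $a_i$, reducing the problem for each fixed pair to a finite, checkable search (consistent with the finiteness of triples from \cite{Corvaja:2003,Hernandez:2003}). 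The obstacle at this stage is that the resulting bound grows with $p$ and $q$, so a pair-by-pair verification cannot exhaust the infinitely many $(p,q)$; the argument must instead be made uniform.

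The crux — and the reason the statement is still only a conjecture — is to make this exponent bound uniform in $(p,q)$. The quantity that must be controlled is the valuation $v_p(q^{n}-1)$ (and symmetrically $v_q(p^{n}-1)$), which governs how large $x_{ij}$ can grow relative to $y_{ij}$. By lifting the exponent, $v_p(q^{n}-1)=v_p\!\left(q^{\ord_p q}-1\right)+v_p(n)$ whenever $\ord_p q\mid n$, and the leading term $v_p\!\left(q^{\ord_p q}-1\right)$ equals $1$ \emph{precisely when} the Wieferich congruence $q^{p-1}\not\equiv 1\pmod{p^2}$ holds. Thus the uniform bound, and with it the entire argument, passes for every non-Wieferich pair. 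To capture the Wieferich pairs as well I would attempt either to replace the first-order valuation estimate by a second-order one working modulo $p^3$ that stays bounded, or to show that the auxiliary-prime congruence of the second paragraph already excludes the close-$S$-unit configuration $U>V>W$ outright, independently of valuations. Supplying such an unconditional, uniform valuation bound is the single hardest step, and it is exactly here that the available techniques stop short of the full conjecture.
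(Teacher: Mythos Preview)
The statement you are attempting is Conjecture~\ref{con:S-quadruple}, which the paper leaves \emph{open}; there is no proof in the paper to compare against. Your proposal is not a proof either --- you concede in the final sentence that the Wieferich obstruction is ``exactly here that the available techniques stop short of the full conjecture.'' What you have written is a strategic outline that correctly identifies the main difficulty but does not overcome it.

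Your outline is broadly consonant with what the paper actually proves: the conjecture holds for $p\in\{2,3\}$ (Theorem~\ref{th:p23}) and for $p\equiv 3\pmod 4$ provided $(p,q)$ is not an extreme Wieferich pair (Theorem~\ref{th:gen}). The engine there is the same system of $S$-unit equations you set up, combined with Laurent's bound for linear forms in two logarithms (Lemma~\ref{lem:twologs}). But the paper's handling of $p\equiv 3\pmod 4$ is not a ``Pell-type degeneration'': it is a quadratic-residue argument (Lemma~\ref{lem:quad_res}) forcing a pair of $\alpha$-exponents to vanish, followed by a lengthy case analysis (Proposition~\ref{prop:p=3mod4}, Table~\ref{tab:p=3mod4}) and separate $p$-adic and $q$-adic valuation arguments (Propositions~\ref{prop:Wief_padic} and~\ref{prop:Wief_qadic}). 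Your assertion that the difference identities reduce to a directly solvable Pell equation is unsubstantiated.

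Several intermediate claims are loose or false. From ``at most one $a_i$ is odd'' it does not follow that ``at least three $s_{ij}$ are $\equiv 1\pmod 8$'': if $a_2\equiv a_3\equiv 2\pmod 4$ then $a_2a_3+1\equiv 5\pmod 8$. The auxiliary-prime idea for $p\equiv q\equiv 1\pmod 4$ is purely programmatic --- you give no construction of $\ell$ and no mechanism linking power-residue symbols at $\ell$ to the exponents $x_{ij},y_{ij}$. And the proposed cures for the Wieferich case (a ``second-order'' estimate modulo $p^3$, or the auxiliary-prime congruence) are hopes, not arguments. Your diagnosis matches the paper's, but nothing here closes the gap.
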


The author together with Szalay have solved several instances of this conjecture in a series of papers \cite{Szalay:2013a,Szalay:2013b,Szalay:2015}. In particular, they proved:

\begin{theorem*}[Szalay and Ziegler]
Let $S=\{p,q\}$ be a set of two primes. Then the following holds:
\begin{itemize}
 \item If $p^2\nmid q^{\ord_p(q)}-1$, $q^2\nmid p^{\ord_q(p)}-1$, and $q<p^\xi$ holds for
some $\xi>1$. Then there exists an effectively computable constant $C=C(\xi)$ such that for all such primes $p,q>C$
no $S$-Diophantine quadruple exists (see \cite{Szalay:2013a}).
 \item No $S$-Diophantine quadruple exists, if $p\equiv q\equiv 3 \mod 4$ (see \cite{Szalay:2013b}).
 \item No $S$-Diophantine quadruple exists, if $p=2$ and $q\equiv 3 \mod 4$ (see \cite{Szalay:2015}).
 \item No $S$-Diophantine quadruple exists, if $p=2$ and $q<10^9$  (see \cite{Szalay:2015}).
 \item No $S$-Diophantine quadruple exists, if $p,q<10^5$ (see \cite{Szalay:2015}).
\end{itemize}
\end{theorem*}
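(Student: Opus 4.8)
The plan is to fix a putative $S$-Diophantine quadruple $(a_1,a_2,a_3,a_4)$ with $a_1<a_2<a_3<a_4$, write each of the six products as an $S$-unit $s_{ij}=a_ia_j+1=p^{x_{ij}}q^{y_{ij}}$, and extract enough algebraic relations to reduce the problem to a finite, decidable system. The relations I would start from are the three ``quadrilateral'' identities
\begin{align*}
s_{12}s_{34}-s_{13}s_{24}&=(a_1-a_4)(a_2-a_3),\\
s_{12}s_{34}-s_{14}s_{23}&=(a_1-a_3)(a_2-a_4),\\
s_{13}s_{24}-s_{14}s_{23}&=(a_1-a_2)(a_3-a_4),
\end{align*}
together with the observation that $(s_{ij}-1)(s_{ik}-1)(s_{jk}-1)=(a_ia_ja_k)^2$ is a perfect square for each triple. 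Setting $P_1=s_{12}s_{34}$, $P_2=s_{13}s_{24}$, $P_3=s_{14}s_{23}$, the first block shows that $P_1>P_2>P_3$ are three $S$-units whose pairwise differences are products of differences of the $a_i$; since each $P_\ell\asymp a_1a_2a_3a_4$ while $P_1-P_3=(a_1-a_3)(a_2-a_4)$ has size $\ll a_3a_4$, the ratio $P_1/P_3$ is an $S$-unit lying in $1+O\!\left(1/(a_1a_2)\right)$. This near-equality of $S$-units is the engine that drives every case.

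For the two congruence statements — $p\equiv q\equiv 3\pmod 4$ and $p=2,\ q\equiv 3\pmod 4$ — I would need no transcendence input. When $p,q\equiv 3\pmod 4$ one has $s_{ij}\equiv(-1)^{x_{ij}+y_{ij}}\pmod 4$, so $a_ia_j\equiv s_{ij}-1\in\{0,2\}\pmod 4$; running through the residue patterns of $(a_1,a_2,a_3,a_4)\bmod 4$ already discards most configurations. The survivors are eliminated using that $-1$ is a quadratic non-residue modulo $p$: whenever $p\mid s_{ij}$ the relation $a_ia_j\equiv-1\pmod p$ becomes a statement about Legendre symbols, and the squareness of $(s_{ij}-1)(s_{ik}-1)(s_{jk}-1)$ then forces incompatible sign patterns. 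The case $p=2$ is handled the same way but more cleanly, since $x_{ij}\ge 1$ forces $s_{ij}$ even and hence both $a_i,a_j$ odd, while all the quadratic-residue information is carried by the single prime $q\equiv 3\pmod 4$.

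The asymptotic statement is where genuine Diophantine approximation is required, and it is the step I expect to be hardest. Writing $P_1/P_3=p^{e}q^{f}$, the estimate $|e\log p+f\log q|\ll 1/(a_1a_2)$ combined with Baker-type lower bounds for linear forms in two logarithms bounds $\max(|e|,|f|)$ — and hence all exponents $x_{ij},y_{ij}$ — in terms of $\log a_i$, unless the linear form vanishes, which would give $P_1=P_3$ and a contradiction. The hypothesis $q<p^\xi$ keeps $\log p$ and $\log q$ comparable, making this bound uniform in the pair, while the non-Wieferich conditions $p^2\nmid q^{\ord_p(q)}-1$ and $q^2\nmid p^{\ord_q(p)}-1$ are exactly what guarantees, via lifting-the-exponent, that $v_p(q^n-1)$ and $v_q(p^n-1)$ grow in the minimal regular way; this lets one recover the exponents of $p$ (resp.\ of $q$) effectively from the $a_i$ and close the loop. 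The conclusion is an absolute bound on $a_4$ once $p,q>C(\xi)$, contradicting the existence of a quadruple.

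Finally, the two effective statements ($p=2,\ q<10^9$ and $p,q<10^5$) follow by making the previous analysis explicit for each admissible pair: the linear-forms bound yields a huge but explicit ceiling on the exponents, which is then reduced to a searchable range by lattice reduction or by continued fractions applied to $e\log p+f\log q$, after which a finite sieve — again exploiting the $\bmod\,4$ and $\bmod\,p$ restrictions — eliminates the remaining candidates. The main practical obstacle here, and the crux of the whole argument, is bridging the gap between the astronomically large theoretical bounds on the exponents and a genuinely finite verification; everything else is a matter of organizing the residue bookkeeping so that the case distinctions stay manageable.
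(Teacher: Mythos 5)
Your proposal is correct in outline and takes essentially the same route as the cited Szalay--Ziegler proofs (whose tools the present paper reproduces): your ratio $P_1/P_3$ is precisely the Stewart--Tijdeman quantity $T=\frac{s_1s_6}{s_3s_4}$ of \eqref{eq:Stewart}, your squareness/quadratic-residue argument is exactly the proof of Lemma~\ref{lem:quad_res}, and the two-logarithm lower bound combined with non-Wieferich valuation control, followed by continued-fraction reduction and a finite sieve, is the method of \cite{Szalay:2013a} and \cite{Szalay:2015} (cf.\ Lemmas~\ref{lem:twologs}, \ref{lem:p-adic}, \ref{lem:CFracMethod} and Algorithm~\ref{algo}). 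Since no genuinely different decomposition or key lemma appears, there is nothing further to compare.
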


The next step toward proving Conjecture \ref{con:S-quadruple} is to prove the conjecture for small but fixed $p$. For instance, let $S=\{3,q\}$ and we may ask whether there exists
such a $S$-Diophantine quadruple. Indeed we can show the following.

\begin{theorem}\label{th:p23}
 Let $q>3$ be a prime and $S=\{3,q\}$ or $S=\{2,q\}$. Then no $S$-Diophantine quadruple exists.
\end{theorem}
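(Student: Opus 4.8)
The plan is to assume a quadruple $(a,b,c,d)$ with $a<b<c<d$ exists and to write each $s_{i,j}=p^{\alpha_{i,j}}q^{\beta_{i,j}}$. The first step is a parity reduction that exploits Lemma~\ref{lem:tripel} (the fact that $s(1)=3$, i.e.\ that no single prime admits a Diophantine triple). Note that $a_ia_j+1$ is odd, hence a pure power $q^{\beta_{i,j}}$, exactly when at least one of $a_i,a_j$ is even. For $p=3$ every $s_{i,j}=3^{\alpha}q^{\beta}$ is odd, so $a_ia_j$ is always even and therefore at most one of $a,b,c,d$ can be odd; at least three are even. For $p=2$ I would argue dually: if at most two of the $a_i$ were odd, then among the (at least two) even elements together with one further element one finds a triple all of whose pairwise products are powers of $q$, a $\{q\}$-Diophantine triple excluded by Lemma~\ref{lem:tripel}; hence at least three of $a,b,c,d$ are odd.

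Next I would run a congruence analysis to pin down residues. For $p=3$ I look modulo $3$: a pair with equal nonzero residues $a_i\equiv a_j\not\equiv0\pmod3$ forces $s_{i,j}\equiv2\pmod3$, which is impossible when $q\equiv1\pmod3$ and otherwise forces $s_{i,j}$ to be an \emph{odd} power of $q$. A short case check shows that if $q\equiv1\pmod3$ one always produces two elements divisible by $3$ and hence a $\{q\}$-Diophantine triple (contradiction), while if $q\equiv2\pmod3$ the only surviving residue pattern is $\{1,1,2,2\}\pmod3$. Labelling so that $a\equiv b\equiv1$ and $c\equiv d\equiv2\pmod3$, this pattern forces $s_{a,b}=q^{m}$ and $s_{c,d}=q^{n}$ with $m,n$ odd, while each of the four ``cross'' values $s_{a,c},s_{a,d},s_{b,c},s_{b,d}$ is divisible by $3$. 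For $p=2$ the analogous tool is reduction modulo $4$ (and, where needed, modulo $8$): recalling that the classes $q\equiv3\pmod4$ and $q<10^9$ are already settled by Szalay and Ziegler, I may assume $q\equiv1\pmod4$, and then the residues modulo $4$ of the odd elements determine each $2$-adic valuation $\alpha_{i,j}$, typically pinning the odd--odd products to the shape $2q^{\beta}$.

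The engine of the argument is then the family of identities
\[
 s_{i,j}s_{k,l}-s_{i,k}s_{j,l}=(a_i-a_l)(a_j-a_k),
\]
together with its two companions obtained by permuting the indices, in which each left--hand side is a difference of two $S$-units. Feeding in the valuation data from the previous step collapses these into a small number of \emph{two-term} relations of the form $2^{a}q^{b}-2^{c}q^{d}=(a_i-a_l)(a_j-a_k)$ (for $p=2$) or $q^{b}-3^{e}q^{d}=(a_i-a_l)(a_j-a_k)$ (for $p=3$); comparing $p$-adic and $q$-adic valuations on both sides, and using that the right--hand side is squeezed between controlled multiples of the $a_i$, bounds all exponents and reduces the problem to finitely many $S$-unit equations, which one solves explicitly and checks against the size ordering $a<b<c<d$.

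I expect the main obstacle to be precisely this last reduction: the right--hand sides $(a_i-a_l)(a_j-a_k)$ are \emph{not} $S$-units, so turning the identities into genuinely finite $S$-unit equations uniformly in $q$ is delicate. The crux is to show that the factors of $3$ (resp.\ of $2$) dictated by the congruence step, combined with the rigid $\{1,1,2,2\}$ residue pattern and the quadratic--residue constraints modulo $q$ coming from $ab\equiv cd\equiv-1\pmod q$, force the exponents $\alpha_{i,j},\beta_{i,j}$ to be so small that only a bounded, $q$-independent list of configurations remains; each of these is then eliminated either by a direct contradiction or by appeal to the already--known cases of Conjecture~\ref{con:S-quadruple}.
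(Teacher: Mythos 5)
Your elementary reductions are sound, and they in fact parallel the paper's own structural step: your residue analysis mod~$3$ (the surviving pattern $\{1,1,2,2\}$, two ``parallel'' products being pure powers of $q$, the four cross products divisible by $3$) recovers essentially what the paper gets from Lemma~\ref{lem:quad_res} and Proposition~\ref{prop:p=3mod4}, and your identity $s_{i,j}s_{k,l}-s_{i,k}s_{j,l}=(a_i-a_l)(a_j-a_k)$ is an equivalent form of the system \eqref{eq:SUnit}. But the step you yourself flag as the ``main obstacle'' is a genuine gap, and it is exactly where all the quantitative work of the paper lives. Comparing $3$-adic and $q$-adic valuations of the two sides of a relation such as $q^{m+n}-3^{E}Q=(a-d)(b-c)$ can only ever yield further congruence information (e.g.\ $3\nmid(a-d)(b-c)$, which is already consistent with the residue pattern); it cannot bound the exponents, because the right-hand side is not an $S$-unit and nothing elementary prevents both sides from being astronomically large. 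The paper needs two quantitative inputs that are absent from your plan. First, an archimedean one: Laurent's lower bound for linear forms in two logarithms (Lemma~\ref{lem:twologs}), applied to the Stewart--Tijdeman quantity $T=s_1s_6/(s_3s_4)$, which yields $\max\{A\log p,B\log q\}<52038\log p\log q$ (Proposition~\ref{prop:bound_AB}, Corollary~\ref{cor:bound_23}). Second, a non-archimedean one: the lifting-the-exponent bound $v_3(q^x-1)\le v_3(q^2-1)+v_3(x)$ combined with the observation that $3\nmid\gcd(q-1,q+1)=2$, so that $3^{v_3(q^2-1)}\le(q+1)/2$; this ``automatic Wieferich condition'' is the entire reason $p=3$ (and, via $v_2(q^x-1)=v_2(q-1)+v_2(x)$ for $q\equiv 1\bmod 4$, also $p=2$) can be handled while general $p$ cannot, and it is what converts divisibilities like $3^{\alpha_2-\alpha}\mid q^{\beta_5-\beta_4}-1$ into usable bounds like $3^{\alpha_2-\alpha}<q^2$ (Lemma~\ref{lem:p-adic-bound}). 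Note moreover that the two tools are intertwined: the $3$-adic bound needs the archimedean bound $|\beta_*-\beta_\dagger|<52038\log 3$ as input in order to control $v_3(|\beta_*-\beta_\dagger|)$.

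Even granting both bounds, the endgame is not the ``bounded, $q$-independent list of configurations, each eliminated by a direct contradiction'' that you hope for: the paper still needs a delicate case analysis with $L$-notation expansions of $a^2=(s_1-1)(s_2-1)/(s_4-1)$ and related quantities (Sections~\ref{Sec:Wief_padic} and~\ref{Sec:p=3}), which produces contradictions only for $q$ large, and the remaining range ($q<57170$ for $p=3$, $q<10^9$ for $p=2$) is disposed of by the computer searches of \cite{Szalay:2015} (Lemma~\ref{lem:max_pq}), not by hand. So your proposal captures a correct elementary skeleton, but the two theorems it would need in order to become a proof --- a linear-forms-in-logarithms bound for the exponents and a $p$-adic lifting-the-exponent bound --- are missing, and without them the reduction to finitely many cases does not go through.
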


Unfortunately the cases $p=2,3$ are somehow special and with our current method we cannot extend Theorem \ref{th:p23} to other fixed primes $p$. However as it was shown in \cite{Szalay:2013b}
the case that $p\equiv q \equiv 3 \mod 4$ is
rather easy. Therefore it is reasonable to investigate the case that either $p\equiv 3 \mod 4$ or $q\equiv 3 \mod 4$. Unfortunately we can exclude the existence of $S$-Diophantine quadruples in this case
only under the additional assumption that $p$ and $q$ form a Wieferich pair\footnote{That is $p^2|q^{p-1}-1$ and $q^2|p^{q-1}-1$. Note that some authors call this a double Wieferich prime pair.}.
Indeed even a weaker form is sufficient.

\begin{definition}
Let $p<q$ be primes. We call $(p,q)$ an extreme Wieferich pair if
\begin{equation}\label{eq:Wieferich-ext}
 v_q(p^{q-1}-1)\geq 2 \qquad \text{and} \qquad v_p(q^{p-1}-1)\geq \max\left\{2,\frac {\log q}{\log p}\right\},
\end{equation}
where $v_p(x)$ and $v_q(x)$ denote the $p$-adic and $q$-adic valuation of $x$, respectively.
\end{definition}

It is obvious that every extreme Wieferich pair is also an ordinary Wieferich pair, i.e. a pair of primes satisfying $v_q(p^{q-1}-1)\geq 2$ and
$v_p(q^{p-1}-1)\geq 2$. It is also obvious that in case that $p<q<p^2$ every ordinary Wieferich pair is also extreme. Up to now there are only seven known Wieferich pairs $(p,q)$,
but non of which is extreme and satisfies $q>p^2$.

However with this notation we can prove the following theorem: 

\begin{theorem}\label{th:gen}
 Let $p<q$ be primes and assume that $p\equiv 3 \mod 4$. Then a $\{p,q\}$-Diophantine quadruple exists only if $(p,q)$ is an extreme Wieferich pair, i.e. satisfies \eqref{eq:Wieferich-ext}.
\end{theorem}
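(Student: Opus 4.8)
The plan is to assume that a $\{p,q\}$-Diophantine quadruple $(a_1,a_2,a_3,a_4)$ exists, normalized by $a_1<a_2<a_3<a_4$, and to squeeze the divisibility conditions \eqref{eq:Wieferich-ext} out of its structure. First I would record the elementary constraints. Every $s_{ij}=a_ia_j+1$ is odd, so each $a_ia_j$ is even; two odd entries would make some $a_ia_j$ odd, hence at least three of the $a_i$ are even. For an edge joining two even entries one has $s_{ij}\equiv 1\pmod 4$, and since $p\equiv 3\pmod 4$ this forces the exponent of $p$ (and, if $q\equiv 3\pmod4$, the joint exponent) in $s_{ij}=p^{x_{ij}}q^{y_{ij}}$ to be even. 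Next comes a quadratic-residue pigeonhole: for any triple $(a,b,c)$, if $p$ divided all of $ab+1,ac+1,bc+1$ then $(abc)^2\equiv-1\pmod p$, forcing $p\equiv 1\pmod4$, a contradiction. Thus every triangle carries an edge coprime to $p$, i.e. a pure power $q^{y}$. Equivalently the graph $G_p$ on $\{1,2,3,4\}$ with edges $\{i,j\}$ such that $p\mid s_{ij}$ is triangle-free, so it has at most four edges and at least two edges are pure $q$-powers; classifying triangle-free graphs on four vertices (matchings, paths, the $4$-cycle) yields a short list of admissible divisibility patterns.

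In each admissible pattern I would eliminate the $a_i$ by multiplying edge-relations along complementary perfect matchings. For instance, in the $4$-cycle pattern the two diagonals are pure $q$-powers with $(a_1a_3)(a_2a_4)=a_1a_2a_3a_4=(q^{C}-1)(q^{D}-1)$, while the four cycle edges are $p$-divisible and give $a_1a_2a_3a_4=(p^{A}-1)(p^{B}-1)$ as well, and the remaining patterns reduce similarly. This collapses the entire quadruple to an identity of the shape
\[(p^{A}-1)(p^{B}-1)=(q^{C}-1)(q^{D}-1),\]
supplemented by the parity information above and by the size ordering of the $a_i$.

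The heart of the argument is then to read the reduced relations modulo $p^2$ and $q^2$. The parity step guarantees squared moduli: a pure $p$-power edge between even vertices satisfies $a_ia_j\equiv-1\pmod{p^2}$ (for either residue class of $q$), and a pure $q$-power edge between even vertices satisfies $a_ia_j\equiv-1\pmod{q^2}$ when $q\equiv 3\pmod4$. Substituting these square-congruences into the matching identities produces relations of the form $q^{B}\equiv 1\pmod{p^2}$ and $p^{A}\equiv 1\pmod{q^2}$ for suitable exponents whose multiplicative orders divide $p-1$ and $q-1$ respectively. Writing $d_1=\ord_p(q)$ and using that $d_1\mid p-1$ with $p\nmid (p-1)/d_1$, lifting the exponent gives $v_p(q^{B}-1)=v_p(q^{p-1}-1)$, so $q^{B}\equiv 1\pmod{p^2}$ is exactly $v_p(q^{p-1}-1)\ge 2$; symmetrically one obtains $v_q(p^{q-1}-1)\ge 2$. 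The ordering $a_1<\dots<a_4$ then forces a power of $p$ of $p$-adic valuation $k$ to exceed a quantity of magnitude $\approx q$, which upgrades the $p$-adic bound to $v_p(q^{p-1}-1)\ge \log q/\log p$. Together these are precisely \eqref{eq:Wieferich-ext}.

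I expect the decisive difficulty to lie in this last step. The parity mechanism supplies the squared modulus $q^2$ only when $q\equiv 3\pmod4$, whereas the theorem must also yield $v_q(p^{q-1}-1)\ge 2$ when $q\equiv 1\pmod4$; in that case the $q$-adic valuation of the reduced identity is uninformative, so the first Wieferich divisibility has to be extracted by a separate congruence or gap estimate. Organizing the valuation-versus-size dichotomy so that the ``large exponent'' branch (where an exponent is divisible by $q$, making $a_1a_2a_3a_4$ too large) is always excluded except through the Wieferich divisibilities, and doing this uniformly over the admissible patterns while tracking the $2$-adic parities, will be the main obstacle. Keeping the case analysis bounded — ideally funnelling every pattern into the same key identity — will be essential.
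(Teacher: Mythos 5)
Your reduction to the identity $(p^{A}-1)(p^{B}-1)=(q^{C}-1)(q^{D}-1)$ is where the argument breaks. The graph $G_p$ records only divisibility by $p$, so in the $4$-cycle pattern the cycle edges have the form $s_{ij}=p^{x}q^{y}$ with $x\geq 1$ but $y$ arbitrary; they are not pure $p$-powers, and multiplying along a matching gives $(p^{x}q^{y}-1)(p^{x'}q^{y'}-1)$, not $(p^{A}-1)(p^{B}-1)$. The one configuration in which your key identity would actually arise --- one perfect matching of pure $q$-powers and another of pure $p$-powers --- is precisely the system \eqref{eq:alpha_beta_zero}, which Szalay and Ziegler showed has no solutions (Lemma \ref{lem:alpha_beta_zero}); so the identity you plan to funnel every pattern into never occurs, and all of the difficulty lives in the mixed-exponent configurations your outline discards. (Your classification of triangle-free graphs on four vertices is also incomplete: the star $K_{1,3}$ is triangle-free, its complement has no perfect matching, and it must be excluded via the divisibility statement $s\nmid t$ of Lemma \ref{lem:tripel}, not via triangle-freeness.) In the paper, taming the mixed configurations is the content of Sections \ref{Sec:UB} and \ref{Sec:Prime}: Laurent's lower bound for linear forms in two logarithms (Lemma \ref{lem:twologs}) gives the exponent bound $\max\{A\log p,B\log q\}\leq 52038\log p\log q$ (Proposition \ref{prop:bound_AB}, Corollary \ref{cor:bound_AB}), and a long case analysis then pins the exponents to the four patterns of Table \ref{tab:p=3mod4}.

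The valuation step cannot be closed without exactly that missing transcendence input. Lifting the exponent gives $v_p\left(q^{B}-1\right)=v_p\left(q^{p-1}-1\right)+v_p(B)$, so a congruence $q^{B}\equiv 1\pmod{p^2}$ may hold simply because $p\mid B$ and carries no Wieferich information unless $v_p(B)$ is bounded; this is what Lemma \ref{lem:p-adic-bound} does, and it needs Proposition \ref{prop:bound_AB} (hence Baker--Laurent bounds) as input, which appear nowhere in your proposal. Moreover, even with those bounds, the paper's $p$-adic and $q$-adic arguments (Sections \ref{Sec:Wief_padic} and \ref{Sec:Wief_qadic}) only yield a contradiction when $q\geq 52038\log p$, respectively $q>700393$; the theorem for the remaining finitely many pairs --- over $3\times 10^{8}$ of them --- is established by a computer search based on the continued-fraction reduction of Lemmas \ref{lem:CFracMethod} and \ref{lem:alpha12_beta12_small} (Section \ref{Sec:Small}). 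Your proposal has no counterpart to either ingredient, and the closing difficulty you yourself flag (the ``large exponent'' branch, and extracting $v_q(p^{q-1}-1)\geq 2$ when $q\equiv 1\pmod 4$) is exactly this gap: it is resolved in the paper by effective bounds plus computation, not by congruences, and no purely combinatorial route around it is offered or known.
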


 In other words if $p<q$, $p\equiv 3\mod 4$ and if $(p,q)$ is not an extreme Wieferich pair, then no $\{p,q\}$-Diophantine quadruple exists.

\section{Strategy of the paper}\label{Sec:strategy}

Before we explain the strategy of the paper we start with some notations. Therefore let $S=\{p,q\}$ be a set of two distinct primes with $p<q$ and let $(a,b,c,d)$ be a hypothetical $S$-Diophantine quadruple
satisfying $a<b<c<d$. These two assumptions will be kept throughout the paper and it will be stated explicitly if we do not assume them (this happens mainly in Section \ref{Sec:Prime} where we will drop the
assumption that $p<q$ holds). Then we write:
\begin{align*}
 ab+1=&p^{\alpha_1}q^{\beta_1}:=s_1, &  bc+1=&p^{\alpha_4}q^{\beta_4}:=s_4, \\
 ac+1=&p^{\alpha_2}q^{\beta_2}:=s_2, &  bd+1=&p^{\alpha_5}q^{\beta_5}:=s_5, \\
 ad+1=&p^{\alpha_3}q^{\beta_3}:=s_3, &  cd+1=&p^{\alpha_6}q^{\beta_6}:=s_6.
\end{align*}
Moreover, we let $A=\max_{i=1,\dots,6}\{\alpha_i\}$ and $B=\max_{i=1,\dots,6}\{\beta_i\}$. If we compute 
$$abcd=(s_1-1)(s_6-1)=(s_2-1)(s_5-1)=(s_3-1)(s_4-1)$$
in different ways we obtain the following three non-linear $S$-unit equations
\begin{equation}\label{eq:SUnit}
 \begin{split}
  s_1s_6-s_1-s_6=&s_2s_5-s_2-s_5,\\
  s_3s_4-s_3-s_4=&s_2s_5-s_2-s_5,\\
  s_1s_6-s_1-s_6=&s_3s_4-s_3-s_4.
 \end{split}
\end{equation}
A thorough study of this system of $S$-unit equations will yield Theorems \ref{th:p23} and \ref{th:gen}. Let us give a rough overview of the ideas that allow us to derive our main results from \eqref{eq:SUnit}.

In the next section we will gather all auxiliary results which are essential in proving our main Theorems \ref{th:p23} and \ref{th:gen}. In Section \ref{Sec:UB} we will prove upper bounds for the exponents
$\alpha_1,\dots,\alpha_6$ and $\beta_1,\dots,\beta_6$. In particular, the main result of Section \ref{Sec:UB} is
that 
$$\max\{A\log p ,B\log q\}< 52038 \log p\log q,$$
provided that $p\equiv 3\mod 4$ or $q\equiv 3\mod 4$. In Section \ref{Sec:Prime} we show that if $p\equiv 3 \mod 4$ or $q\equiv 3\mod 4$, then the exponents $\alpha_1,\dots,\alpha_6$ and $\beta_1,\dots,\beta_6$ 
have to fulfill rather restrictive relations (see Table \ref{tab:p=3mod4}). These restrictions allow us to show in Section \ref{Sec:Wief_padic} that 
under the assumption that $p\equiv 3 \mod 4$ and $q$ is large, i.e. $q>700393$, no $S$-Diophantine quadruple exists, if the $p$-adic Wieferich condition
 $$v_p(q^{p-1}-1)< \max\left\{2,\frac {\log q}{\log p}\right\}$$
 is fulfilled. An almost immediate consequence of this result is that no $\{3,q\}$-Diophantine quadruple exists
 and with a little bit more effort we can also show that no $\{2,q\}$-Diophantine quadruple exists. This is subject to Section \ref{Sec:p=3}. In Section \ref{Sec:Wief_qadic}
 we are interested in the case that the $q$-adic Wieferich condition $v_q(p^{q-1}-1)=1$ is fulfilled. In particular,
 we show that no $S$-Diophantine quadruple exists, if the $q$-adic Wieferich condition is fulfilled, $p\equiv 3 \mod 4$ and $q$ is large, i.e. $q>700393$. This proves Theorem \ref{th:gen}
 in the case that $q$ is large and we are left with the case that $p<q\leq 700393$. However these finitely many instances can be resolved by an algorithm due to Szalay and Ziegler~\cite{Szalay:2015} and
 we will discuss the implementation of the algorithm in Section \ref{Sec:Small}. In the final section we discuss further possible results and open problems.

\section{Auxiliary results}\label{Sec:aux}

We start with some lemmas that have been established already in \cite{Szalay:2013a,Szalay:2013b,Szalay:2015}. We start with the following simple divisibility condition which was proved in \cite[Lemma 2.1]{Szalay:2013a}:

\begin{lemma}\label{lem:tripel}
 Let $(a,b,c)$ be a $S$-Diophantine triple, with $a<b<c$, then $s\nmid t$ with $s=ac+1$ and $t=bc+1$.
\end{lemma}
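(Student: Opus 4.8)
The plan is to argue by contradiction, reducing the claimed non-divisibility to an elementary size estimate. I would assume that $s \mid t$, where $s = ac+1$ and $t = bc+1$, and derive a contradiction from the ordering $a < b < c$ alone. It is worth noting that the $S$-Diophantine hypothesis plays no role in this argument; the statement holds for arbitrary positive integers with $a<b<c$.

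First I would pass to the difference $t - s = (bc+1) - (ac+1) = (b-a)c$. Since $s \mid t$ and trivially $s \mid s$, the assumed divisibility yields $s \mid (t-s)$, that is, $(ac+1) \mid (b-a)c$.

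The key observation is that $\gcd(ac+1,\,c) = 1$, because any common divisor of $ac+1$ and $c$ must also divide the combination $(ac+1) - a\cdot c = 1$. Consequently the factor $c$ may be stripped off, and I obtain $(ac+1) \mid (b-a)$.

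Finally I would invoke the size constraints: since $a\ge 1$ we have $s = ac+1 \ge c+1 > c > b > b-a > 0$, so $0 < b-a < s$. A positive integer strictly smaller than $s$ cannot be a multiple of $s$, which contradicts $s \mid (b-a)$. The only step requiring a moment's care is the coprimality claim $\gcd(ac+1,c)=1$, which unlocks the reduction; everything else is immediate, so I do not anticipate a genuine obstacle here.
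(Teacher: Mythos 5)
Your proof is correct, and it is essentially the standard argument: the paper itself does not reproduce a proof (it cites Lemma~2.1 of Szalay--Ziegler), and that argument likewise reduces the assumed divisibility $ac+1 \mid bc+1$ to $ac+1 \mid b-a$ (there by forming $a(bc+1)-b(ac+1)$ rather than subtracting and invoking $\gcd(ac+1,c)=1$) and then derives the same size contradiction $0 < b-a < ac+1$. Your observation that the $S$-unit hypothesis is irrelevant is also accurate.
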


Let us note that Lemma \ref{lem:tripel} implies that $\{p\}$-Diophantine triples do not exist, i.e. $s(1)=3$.
An immediate consequence of Lemma \ref{lem:tripel} is that we can exclude the following relations between exponents:
\begin{gather*}
 \alpha_2=\alpha_4, \quad \alpha_3=\alpha_5, \quad \alpha_3=\alpha_6, \quad \alpha_5=\alpha_6\\
 \beta_2=\beta_4, \quad \beta_3=\beta_5, \quad \beta_3=\beta_6, \quad \beta_5=\beta_6.
\end{gather*}
On the other hand we have the following lemma (cf. \cite[Proposition 1]{Szalay:2013a} or \cite[Lemma 2.1]{Szalay:2015}). This lemma is obtained by considering the equations
of system \eqref{eq:SUnit} and applying $p$-adic and $q$-adic valuations after we transformed them into a suitable form.

\begin{lemma}\label{lem:min_al}
  The smallest two exponents of the quadruples $(\alpha_2 , \alpha_3 , \alpha_4 , \alpha_5)$,
$(\alpha_1 , \alpha_2$, $\alpha_5 , \alpha_6)$ and $(\alpha_1 , \alpha_3, \alpha_4 , \alpha_6 )$ coincide, respectively. The same statement holds also 
with $\alpha$ replaced by $\beta$.
\end{lemma}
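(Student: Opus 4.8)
The plan is to read the claim directly off the three $S$-unit equations in \eqref{eq:SUnit}, treating each $s_i$ purely as an $S$-unit and exploiting the ultrametric property of the valuations. Each of the three equations has the common shape $s_is_j-s_i-s_j=s_ks_l-s_k-s_l$, and the four indices $\{i,j,k,l\}$ occurring in it are exactly the four indices of one of the three quadruples in the statement: the equation coming from $(s_2-1)(s_5-1)=(s_3-1)(s_4-1)$ carries the quadruple $(\alpha_2,\alpha_3,\alpha_4,\alpha_5)$, the one from $(s_1-1)(s_6-1)=(s_2-1)(s_5-1)$ carries $(\alpha_1,\alpha_2,\alpha_5,\alpha_6)$, and the one from $(s_1-1)(s_6-1)=(s_3-1)(s_4-1)$ carries $(\alpha_1,\alpha_3,\alpha_4,\alpha_6)$. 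Since all three equations share the same structure, it suffices to treat a single generic equation; and since $p$ and $q$ enter symmetrically, it suffices to argue for $v_p$, the statement for $\beta$ following by replacing $v_p$ with $v_q$ throughout.

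First I would rewrite the chosen equation in the suitable form
\[s_is_j-s_ks_l=s_i+s_j-s_k-s_l,\]
separating the quadratic $S$-units on the left from the linear ones on the right. Writing $v_p(s_r)=\alpha_r$ and setting $m=\min\{\alpha_i,\alpha_j,\alpha_k,\alpha_l\}$, I argue by contradiction: assume that this minimum is attained by a \emph{unique} index, say $\alpha_i<\alpha_j,\alpha_k,\alpha_l$, the remaining three cases being entirely symmetric. The heart of the argument is then a comparison of the $p$-adic valuations of the two sides. On the right-hand side the uniqueness of the minimum forces, by the ultrametric equality for a sum whose minimal term is isolated, the exact value $v_p(s_i+s_j-s_k-s_l)=\alpha_i=m$.

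On the left-hand side I use $v_p(s_is_j)=\alpha_i+\alpha_j$ and $v_p(s_ks_l)=\alpha_k+\alpha_l$; because each of $\alpha_j,\alpha_k,\alpha_l$ strictly exceeds $\alpha_i\geq 0$, both sums are strictly larger than $\alpha_i$ (explicitly $\alpha_i+\alpha_j\geq 2\alpha_i+1$ and $\alpha_k+\alpha_l\geq 2\alpha_i+2$), whence $v_p(s_is_j-s_ks_l)>\alpha_i$. Comparing the two sides yields $m=\alpha_i>\alpha_i$, a contradiction. Thus the minimum cannot be unique, i.e.\ the two smallest of $\alpha_i,\alpha_j,\alpha_k,\alpha_l$ coincide, which is exactly the assertion. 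The one step to be careful about — the only place the inequality could fail — is the claim that \emph{both} quadratic terms on the left have valuation exceeding $m$; this is precisely why one compares against the linear side and not the quadratic one, and it relies on $\alpha_i\geq 0$ together with the strictness of $\alpha_i<\alpha_j,\alpha_k,\alpha_l$ (the argument still works when $m=0$). Running the same computation for the remaining two equations, and then replacing $v_p$ by $v_q$ to handle the $\beta$'s, completes the proof.
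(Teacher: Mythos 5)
Your proof is correct and is essentially the paper's own argument: the paper does not prove this lemma in detail but cites Szalay--Ziegler and describes the proof as considering the equations of system \eqref{eq:SUnit} and ``applying $p$-adic and $q$-adic valuations after we transformed them into a suitable form'', which is exactly your rearrangement $s_is_j-s_ks_l=s_i+s_j-s_k-s_l$ followed by the ultrametric comparison of the two sides. Your contradiction (a uniquely attained minimal exponent forces the linear side to have valuation exactly $\alpha_i$ while the quadratic side has valuation strictly larger, using $\alpha_r\geq 0$) is the same mechanism as in the cited proof, so nothing further is needed.
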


Also the following lemma proves to be useful and yields some elementary upper bounds for $a,b,c$ and $d$.

\begin{lemma}\label{lem:abcd-div}
We have
\begin{align*}
a&|\gcd\left(\frac{s_2-s_1}{\gcd(s_2,s_1)},\frac{s_3-s_1}{\gcd(s_3,s_1)},\frac{
s_3-s_2}{\gcd(s_3,s_2)}\right),\\
b&|\gcd\left(\frac{s_4-s_1}{\gcd(s_4,s_1)},\frac{s_5-s_1}{\gcd(s_5,s_1)},\frac{
s_5-s_4}{\gcd(s_5,s_4)}\right),\\
c&|\gcd\left(\frac{s_4-s_2}{\gcd(s_4,s_2)},\frac{s_6-s_2}{\gcd(s_6,s_2)},\frac{
s_6-s_4}{\gcd(s_6,s_4)}\right),\\
d&|\gcd\left(\frac{s_5-s_3}{\gcd(s_5,s_3)},\frac{s_6-s_3}{\gcd(s_6,s_3)},\frac{
s_6-s_5}{\gcd(s_6,s_5)}\right).
\end{align*}
\end{lemma}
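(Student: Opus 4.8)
The plan is to treat the four divisibility relations uniformly, since each concerns one of the variables $a,b,c,d$ together with exactly the three products $s_i$ in which that variable occurs. I would first record the observation that drives everything: for a fixed variable, say $a$, the three relevant products $s_1=ab+1$, $s_2=ac+1$, $s_3=ad+1$ are all congruent to $1$ modulo $a$, and their pairwise differences telescope nicely, namely $s_2-s_1=a(c-b)$, $s_3-s_1=a(d-b)$ and $s_3-s_2=a(d-c)$. In particular $a$ divides each of these three differences.

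Next I would handle the division by the gcd, which is the only point requiring a small argument. Set $g=\gcd(s_1,s_2)$. Since $g\mid s_1=ab+1$, any common prime factor of $g$ and $a$ would divide both $ab$ and $ab+1$, hence divide $1$; thus $\gcd(g,a)=1$. Now $g\mid s_2-s_1=a(c-b)$ together with $\gcd(g,a)=1$ forces $g\mid c-b$, so that $(s_2-s_1)/g=a\cdot\bigl((c-b)/g\bigr)$ is $a$ times an integer. Hence $a\mid(s_2-s_1)/\gcd(s_2,s_1)$, and the same reasoning applied to the pairs $(s_1,s_3)$ and $(s_2,s_3)$ shows that $a$ divides the other two quotients as well. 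Since $a$ divides all three entries, it divides their gcd, which is the first assertion.

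The remaining three relations are proved identically, only the bookkeeping of indices changing. For $b$ one uses the products $s_1,s_4,s_5$ and the identities $s_4-s_1=b(c-a)$, $s_5-s_1=b(d-a)$, $s_5-s_4=b(d-c)$; for $c$ the products $s_2,s_4,s_6$ with $s_4-s_2=c(b-a)$, $s_6-s_2=c(d-a)$, $s_6-s_4=c(d-b)$; and for $d$ the products $s_3,s_5,s_6$ with $s_5-s_3=d(b-a)$, $s_6-s_3=d(c-a)$, $s_6-s_5=d(c-b)$. In each case the three products are $\equiv 1$ modulo the variable in question, so the coprimality-with-the-gcd step goes through verbatim.

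There is really no hard step here: the lemma is an elementary consequence of the defining relations $s_i=x\cdot(\,\cdot\,)+1$. The only place one must be slightly careful is the coprimality observation $\gcd\bigl(\gcd(s_i,s_j),x\bigr)=1$ for the variable $x$ under consideration, which is exactly what legitimizes cancelling the gcd while preserving divisibility by $x$; everything else is just the telescoping of the differences.
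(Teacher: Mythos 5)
Your proof is correct and complete: the telescoping identities $s_j-s_i=x(\cdot)$ together with the observation that $\gcd(s_i,s_j)$ is coprime to the variable $x$ (since each $s_i\equiv 1\bmod x$) give exactly the claimed divisibilities. The paper itself does not prove the lemma but only cites \cite[Lemma 3]{Szalay:2013a}, and your argument is precisely the standard elementary one that reference uses, so your proposal matches the intended proof.
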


\begin{proof}
 A proof can be found in \cite[Lemma 3]{Szalay:2013a}
\end{proof}

Another useful lemma is the following:

\begin{lemma}\label{lem:s4_low_bound}
 Let $(a, b, c, d) \in \Z^4$ be an $S$-Diophantine quadruple, such that $a<b<c<d$. Then
\begin{align*}
\gcd(s_4 , s_2 ) \gcd(s_4 , s_1 ) &< s_4,\\
\gcd(s_5 , s_3 ) \gcd(s_5 , s_1 ) &< s_5,\\
\gcd(s_6 , s_3 ) \gcd(s_6 , s_2 ) &< s_6,\\
\gcd(s_6 , s_5 ) \gcd(s_6 , s_4 ) &< s_6.
\end{align*}

\end{lemma}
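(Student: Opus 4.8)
The plan is to prove all four inequalities by one and the same elementary elimination argument, so I would first isolate the common mechanism and then apply it four times. The key observation is that in each of the four lines the two greatest common divisors share one common $s$-unit, and that this common $s$-unit is exactly the one built from the two \emph{largest} of the variables $a,b,c,d$ occurring in the triple of units involved. The strategy is therefore: bound each of the two gcd factors above by a difference of two of the variables, and then note that the product of these two differences stays strictly below the shared $s$-unit.

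Concretely, I would begin with the general remark that whenever two of the units share a variable, say $s_i=xy+1$ and $s_j=xz+1$, the combination $z\,s_i-y\,s_j=z-y$ cancels the quadratic term $xyz$, so that $\gcd(s_i,s_j)$ divides $z-y$. I would record this once and reuse it. For the first inequality, apply it to the pair $s_4=bc+1,\,s_1=ab+1$ (common variable $b$) to get $\gcd(s_4,s_1)\mid c-a$, and to the pair $s_4=bc+1,\,s_2=ac+1$ (common variable $c$) to get $\gcd(s_4,s_2)\mid b-a$. Since $0<c-a<c$ and $0<b-a<b$ by the ordering $a<b<c<d$, each gcd is at most the corresponding difference, and hence
\[
\gcd(s_4,s_2)\,\gcd(s_4,s_1)\le (b-a)(c-a)< bc < bc+1 = s_4.
\]

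The remaining three inequalities follow verbatim from the same computation with relabelled variables: for the second I would pair $s_5=bd+1$ with $s_3=ad+1$ and with $s_1=ab+1$ to obtain the divisors $b-a$ and $d-a$, whose product lies below $bd<s_5$; for the third I would pair $s_6=cd+1$ with $s_3=ad+1$ and with $s_2=ac+1$, giving $c-a$ and $d-a$ and a product below $cd<s_6$; and for the fourth I would pair $s_6=cd+1$ with $s_5=bd+1$ and with $s_4=bc+1$, giving $c-b$ and $d-b$ and again a product below $cd<s_6$. In every case the strict ordering $a<b<c<d$ ensures that the two differences are positive and strictly smaller than the two factors of the shared $s$-unit, which is precisely what forces the product below it. Note that the argument never uses the $S$-unit property of the $s_i$, only their shape $(\text{product})+1$.

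There is essentially no hard step here; the only thing to get right is the bookkeeping. For each pair of units one must pick the correct multipliers — namely the two variables \emph{not} shared by the pair — so that the product term cancels, and one must verify that the two gcd's in a given line genuinely share the unit formed from the two largest variables. I would use the divisibility relations of Lemma~\ref{lem:abcd-div} as a sanity check that the differences $c-a$, $b-a$, etc.\ are the ones forced by the elimination, and confirm that no degenerate case (a vanishing difference) can arise, which is ruled out exactly by the distinctness and ordering of $a,b,c,d$.
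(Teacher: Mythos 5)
Your proof is correct, and it is complete in a place where the paper itself gives no argument at all: the paper's ``proof'' of this lemma is a citation to \cite[Lemma 4]{Szalay:2013a} (which covers only the first inequality) together with the remark that the other three follow by adjusting that proof. Your cross-multiplication identity is sound: with $s_i=xy+1$, $s_j=xz+1$ one has $z\,s_i-y\,s_j=z-y$, so $\gcd(s_i,s_j)\mid z-y$, and the orderings $a<b<c<d$ guarantee each such difference is a nonzero positive integer, hence an upper bound for the gcd. The four pairings you chose are the right ones (in each line the shared unit is the one built from the two largest variables of the triple), and the final step
\[
\gcd(s_4,s_2)\,\gcd(s_4,s_1)\le (b-a)(c-a)<bc<s_4
\]
and its three analogues are strict exactly because $a\geq 1$. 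The argument in the cited paper reaches the same divisibilities slightly differently: from $s_4-s_1=b(c-a)$ one gets $\gcd(s_4,s_1)\mid b(c-a)$, and since any common divisor of $s_1=ab+1$ and $b$ divides $1$, the factor $b$ can be removed, giving $\gcd(s_4,s_1)\mid c-a$. Your identity buys you this conclusion in one line without the separate coprimality observation, and it makes the ``adjusting the proof'' for the remaining three inequalities genuinely verbatim, so as a self-contained replacement for the citation your write-up is if anything cleaner. Your closing remark that the $S$-unit property is never used is also accurate; the lemma holds for any quadruple of distinct positive integers with the $s_i$ defined as products plus one.
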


\begin{proof}
 A proof can be found in \cite[Lemma 4]{Szalay:2013a} for the first inequality. By adjusting the proof it is easy to obtain the other inequalities.
\end{proof}

The next lemma can be seen as a summary of the results obtained in \cite[Sections 2 and 3]{Szalay:2013b} (for the general case) and \cite[Section 2]{Szalay:2015} (for the special case $p=2$):

\begin{lemma}\label{lem:quad_res}
 Let $p,q$ be primes (not necessarily $p<q$) and assume that $p\equiv 3 \mod 4$ (resp. that $p=2$). Then one of the following statements holds:
 \begin{itemize}
  \item $\alpha_1=\alpha_6=0$ (resp. $\alpha_1=\alpha_6= 1$),
  \item $\alpha_2=\alpha_5=0$ (resp. $\alpha_2=\alpha_5= 1$),
  \item $\alpha_3=\alpha_4=0$ (resp. $\alpha_3=\alpha_4= 1$).
 \end{itemize}
\end{lemma}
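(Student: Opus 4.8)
The plan is to encode the divisibility of the $s_i$ by $p$ as a graph and argue combinatorially. Define $G_p$ to be the graph on the four vertices $\{a,b,c,d\}$ containing the edge $\{a_i,a_j\}$ precisely when $p\mid a_ia_j+1$, i.e.\ when the corresponding exponent $\alpha$ is positive. The three pairs in the statement, $(\alpha_1,\alpha_6)$, $(\alpha_2,\alpha_5)$, $(\alpha_3,\alpha_4)$, correspond exactly to the three perfect matchings $\{ab,cd\}$, $\{ac,bd\}$, $\{ad,bc\}$ of $K_4$, and these three matchings partition the six edges. The assertion $\alpha_i=\alpha_j=0$ for one matching pair is then the statement that this matching contains no edge of $G_p$. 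Hence it suffices to prove that at least one perfect matching of $K_4$ avoids $G_p$, equivalently that the complement $\overline{G_p}$ contains a perfect matching.

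First I would record that $G_p$ is triangle-free when $p\equiv 3\pmod 4$. Indeed, if $\{a_i,a_j\}$ and $\{a_j,a_k\}$ were both edges, then $a_ia_j\equiv a_ka_j\equiv -1\pmod p$, whence $a_i\equiv a_k\pmod p$ and so $a_ia_k\equiv a_i^2\pmod p$; since $-1$ is a quadratic non-residue modulo $p$, we get $a_i^2\not\equiv -1\pmod p$, so the edge $\{a_i,a_k\}$ is absent and no triangle can occur. Now comes the purely combinatorial step: a short enumeration of graphs on four vertices shows that the only triangle-free graphs $G_p$ for which $\overline{G_p}$ has no perfect matching are the stars $K_{1,3}$ (any graph whose edges pairwise meet is a star or a triangle, and the triangle is excluded). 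Thus, if no matching avoids $G_p$, then $G_p$ must be a star centred at one of the vertices, say at $a$, which means $p\mid ab+1,\ p\mid ac+1,\ p\mid ad+1$ while $p\nmid bc+1,\ p\nmid bd+1,\ p\nmid cd+1$.

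It then remains to rule out the star, and this is the key point. If $G_p$ is the star centred at $a$, then $\alpha_4=\alpha_5=\alpha_6=0$, so $s_4=q^{\beta_4}$, $s_5=q^{\beta_5}$, $s_6=q^{\beta_6}$ are pure powers of $q$; in other words $(b,c,d)$ is a $\{q\}$-Diophantine triple. But Lemma~\ref{lem:tripel} shows that $\{q\}$-Diophantine triples do not exist, a contradiction. The same argument applied to a star centred at $b$, $c$ or $d$ exhibits a $\{q\}$-Diophantine triple among the remaining three integers, so every star is excluded. Consequently some matching avoids $G_p$, giving one of the three vanishing pairs and proving the case $p\equiv 3\pmod 4$.

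For the case $p=2$ I would run the analogous argument modulo $4$, declaring $\{a_i,a_j\}$ an edge when $\alpha_i\geq 2$ (that is, $a_ia_j\equiv 3\pmod 4$), so that ``$\alpha=1$'' now plays the role previously played by ``$\alpha=0$''. The main obstacle lies exactly here: the reduction to a $\{q\}$-triple is no longer available, since in the critical configuration (three of the integers in one residue class and one in the other modulo $4$) none of the exponents vanishes, so Lemma~\ref{lem:tripel} cannot dismiss the star. Excluding this configuration requires the finer residue analysis carried out in \cite{Szalay:2015}, and this step is the one I expect to be genuinely harder than the case $p\equiv 3\pmod 4$.
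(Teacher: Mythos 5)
Your argument for the case $p\equiv 3\pmod 4$ is correct, and it is essentially the paper's proof in cleaner packaging: the triangle-freeness of $G_p$ is exactly the paper's observation that three pairwise edges would force $(a_ia_ja_k)^2\equiv -1\pmod p$, and your exclusion of the star via Lemma~\ref{lem:tripel} (a star at one vertex makes the remaining three elements a $\{q\}$-Diophantine triple) replaces the paper's explicit case chase, in which Lemma~\ref{lem:tripel} is instead used to kill coincidences such as $\alpha_2=\alpha_4=0$. The combinatorial reduction (triangle-free on four vertices with no perfect matching in the complement forces $K_{1,3}$) is a transparent way to isolate the star as the unique obstruction, and that half of your write-up is sound as it stands.

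The case $p=2$, however, is genuinely incomplete, and you say so yourself. Two things are missing. First, with edges declared when $\alpha_i\ge 2$, a matching avoiding your graph only yields a pair with $\alpha_i\le 1$; to reach the asserted equality $\alpha_i=1$ you must know that $a,b,c,d$ are all odd, which is not automatic for a $\{2,q\}$-Diophantine quadruple, since an even element simply makes the corresponding $s_i$ odd pure powers of $q$. Second, and more seriously, the star configuration (residues $(1,1,1,3)$ or $(1,3,3,3)$ modulo $4$) is not a configuration in which the conclusion can be salvaged: there every one of the three matchings contains an exponent $\ge 2$, so the lemma is simply false for such a hypothetical quadruple, and the entire content of the $p=2$ statement is that this configuration (and the even one) cannot occur. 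Deferring this to ``finer residue analysis'' in the literature, without stating the result or carrying out the deduction, leaves the proof unfinished. The paper closes exactly this hole by quoting \cite[Lemma 2.5]{Szalay:2013b}: for a $\{2,q\}$-Diophantine quadruple the residues of $(a,b,c,d)$ modulo $4$ are $(1,1,3,3)$ up to permutation; from this the conclusion is read off directly, since the matching that pairs the two elements $\equiv 1$ with each other and the two elements $\equiv 3$ with each other gives $s_i\equiv 2\pmod 4$, i.e.\ $\alpha_i=1$, while the other four $s_i$ are divisible by $4$. If you import that lemma, your graph argument closes (it excludes both the star and the even case at once); without it, the $p=2$ half is a statement of intent rather than a proof.
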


Unfortunately neither in \cite{Szalay:2013b} nor in \cite{Szalay:2015} the statement of Lemma \ref{lem:quad_res} is proved in this form. Therefore we give a proof for the sake of completeness.

\begin{proof}
 We start with the case that $p\equiv 3 \mod 4$. Let us assume for the moment that $\alpha_1,\alpha_2,\alpha_4>0$. Then we have
 $$(abc)^2=(p^{\alpha_1}q^{\beta_1}-1)(p^{\alpha_2}q^{\beta_2}-1)(p^{\alpha_4}q^{\beta_4}-1)\equiv -1 \mod p.$$
 Thus $-1$ is a quadratic residue modulo $p$ which is impossible since $p\equiv 3 \mod 4$. Thus at least one of
 $\alpha_1,\alpha_2$ or $\alpha_4$ is zero. Repeating the same argument with $(abc)^2$ replaced by $(abd)^2,(acd)^2$ and $(bcd)^2$ respectively we obtain the following statements:
 \begin{itemize}
  \item $0\in \{\alpha_1,\alpha_2,\alpha_4\}$,
  \item $0\in \{\alpha_1,\alpha_3,\alpha_5\}$,
  \item $0\in \{\alpha_2,\alpha_3,\alpha_6\}$,
  \item $0\in \{\alpha_4,\alpha_5,\alpha_6\}$.
 \end{itemize}
In view of the first statement we distinguish between the three cases $\alpha_1=0$ (Case A), $\alpha_2=0$ (Case B) and $\alpha_4=0$ (Case C). 

First, assume that Case A holds, i.e. $\alpha_1=0$.
Then the third statement implies that
either $\alpha_2=0$ or $\alpha_3=0$ or $\alpha_6=0$. Note that if $\alpha_6=0$ we are done. Thus we may assume that $\alpha_6\neq 0$. Hence, either $\alpha_2=0$ or $\alpha_3=0$.
Assume for the moment that $\alpha_2=0$ and consider the fourth statement.
Since $\alpha_2=\alpha_4=0$ is not possible due to Lemma \ref{lem:tripel} we arrive at $\alpha_2=\alpha_5=0$ and we are done, again. Now, assume that $\alpha_3=0$ and we consider again the fourth statement.
Since $\alpha_3=\alpha_5=0$ is not possible due to Lemma \ref{lem:tripel} we arrive at $\alpha_3=\alpha_4=0$. This shows that assuming Case A implies the statement of the lemma.

Now we consider Case B, i.e. $\alpha_2=0$, and due to the previous paragraph we may assume that $\alpha_1\neq 0$. Considering the second statement we have that either $\alpha_3=0$ or $\alpha_5=0$.
In the case that $\alpha_2=\alpha_5$ we are done and therefore we may assume that $\alpha_3=0$. Let us consider the fourth statement. But since we already assume that $\alpha_2=\alpha_3=0$ none of
$\alpha_4$, $\alpha_5$ or $\alpha_6$ can be zero due to Lemma~\ref{lem:tripel}.

Thus we finally may assume that $\alpha_4=0$ (Case C), but $\alpha_1,\alpha_2 \neq 0$. Thus due to the second statement we have that either $\alpha_4=\alpha_3=0$ or $\alpha_4=\alpha_5=0$. In case that $\alpha_3=0$ we are
done and therefore we may assume that $\alpha_4=\alpha_5=0$ and $\alpha_3\neq 0$. But then the third statement yields a contradiction. This proves the lemma in the case that $p\equiv 3 \mod 4$.

In the case that $p=2$ we know by \cite[Lemma 2.5]{Szalay:2013b} that up to permutations the remainders of $(a,b,c,d)$ modulo 4 are $(1, 1, 3, 3)$. Let us assume that $(a,b,c,d)\equiv (1,1,3,3) \mod 4$,
then $s_1=ab+1\equiv s_6=cd+1\equiv 2 \mod 4$ while $s_i\equiv 0 \mod 4$ for $i=2,3,4,5$. Thus we obtain $\alpha_1=\alpha_6=1$ and $\alpha_2,\alpha_3,\alpha_4,\alpha_5>1$.
The other $5$ permutations of possible values of $(a,b,c,d)$ modulo $4$ yield the other cases. We leave this easy verification to the reader.
\end{proof}

In their proof that no $\{p,q\}$-Diophantine quadruple exists, provided that $p\equiv q \equiv 3 \mod 4$, Szalay and Ziegler showed that the following system has no solution (see \cite[Section 4]{Szalay:2013b})
\begin{equation}\label{eq:alpha_beta_zero}
\begin{split}
ab+1=q^{\beta_1},\phantom{p^{\alpha_1}}& \qquad\qquad bc+1=p^{\alpha_4}q^{\beta_4}, \\ 
ac+1=p^{\alpha_2},\phantom{q^{\beta_2}}& \qquad\qquad bd+1=p^{\alpha_5}, \\
ad+1=p^{\alpha_3}q^{\beta_3},& \qquad\qquad cd+1=q^{\beta_6}. 
\end{split}
\end{equation}
This was proved without the assumptions that $p<q$ and $a<b<c<d$. That is they proved the following lemma:

\begin{lemma}\label{lem:alpha_beta_zero}
 Let $p<q$ be odd primes and assume that $(a,b,c,d)$ is a $\{p,q\}$-Diophantine quadruple, with $a<b<c<d$. Then the following two statements 
$$ \alpha_1=\alpha_6=0\quad \text{or}\quad\alpha_2=\alpha_5=0\quad \text{or}\quad\alpha_3=\alpha_4=0$$
and
$$ \beta_1=\beta_6=0\quad \text{or}\quad\beta_2=\beta_5=0\quad \text{or}\quad\beta_3=\beta_4=0$$
cannot hold simultaneously.
\end{lemma}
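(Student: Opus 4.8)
The statement to prove is Lemma~\ref{lem:alpha_beta_zero}: that the two ``zero-patterns'' coming from Lemma~\ref{lem:quad_res} (one for the $\alpha$-exponents, one for the $\beta$-exponents) cannot hold simultaneously for a genuine $\{p,q\}$-Diophantine quadruple. The crucial observation is that Lemma~\ref{lem:alpha_beta_zero} is, up to relabelling, exactly the content of the system \eqref{eq:alpha_beta_zero}, which Szalay and Ziegler already showed has no solution in \cite[Section 4]{Szalay:2013b}. So my plan is essentially a reduction argument: assume both patterns hold simultaneously, and show that after a suitable permutation of the indices the quadruple must satisfy a system of the shape \eqref{eq:alpha_beta_zero}, whence the cited impossibility result applies.

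**Key steps.**
First I would enumerate the possibilities. Lemma~\ref{lem:quad_res} (with $p\equiv 3\bmod 4$ applied to the prime $p$, and applied again to $q$ if $q\equiv 3\bmod 4$) gives three mutually exclusive-looking cases for the $\alpha$'s and three for the $\beta$'s, so there are nine combined cases. The heart of the argument is that a \emph{diagonal} combination (the same index-pair vanishing for both $\alpha$ and $\beta$) is immediately impossible: if, say, $\alpha_1=\alpha_6=0$ and $\beta_1=\beta_6=0$, then $s_1=ab+1=1$, forcing $ab=0$, contradicting that $a,b$ are positive. This kills the three diagonal cases outright. For the six \emph{off-diagonal} cases—e.g.\ $\alpha_1=\alpha_6=0$ together with $\beta_2=\beta_5=0$—I would observe that the six products $s_1,\dots,s_6$ then split cleanly: two of them are pure powers of $q$, two are pure powers of $p$, and the remaining two are genuine products $p^{\alpha}q^{\beta}$. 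Matching this against \eqref{eq:alpha_beta_zero}, where $s_1,s_6$ are pure $q$-powers, $s_2,s_5$ are pure $p$-powers, and $s_3,s_4$ are mixed, shows that each off-diagonal case is a relabelling of \eqref{eq:alpha_beta_zero} under a permutation of $\{a,b,c,d\}$.

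**Carrying out the reduction.**
To make the relabelling precise I would exhibit, for each of the six off-diagonal cases, the explicit permutation $\sigma$ of the four integers that carries the pattern onto \eqref{eq:alpha_beta_zero}; the key point is that the six index-pairs $\{1,6\},\{2,5\},\{3,4\}$ correspond to the three ways of partitioning $\{a,b,c,d\}$ into two unordered pairs (namely $\{ab,cd\},\{ac,bd\},\{ad,bc\}$), and $S_4$ acts transitively enough on these partitions to realize any assignment of ``$q$-power,'' ``$p$-power,'' ``mixed'' to the three partition classes. Since \eqref{eq:alpha_beta_zero} was proved insolvable in \cite{Szalay:2013b} \emph{without} assuming either $p<q$ or the ordering $a<b<c<d$, the conclusion is invariant under these permutations, and the contradiction transfers back. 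Hence no off-diagonal case survives either, and the two patterns cannot hold simultaneously.

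**Anticipated obstacle.**
The routine combinatorics of the nine cases is not the real difficulty; the main thing to get right is the \emph{bookkeeping} of the relabelling. I must verify that the cited result from \cite{Szalay:2013b} genuinely makes no use of the size orderings (which the excerpt explicitly asserts: ``This was proved without the assumptions that $p<q$ and $a<b<c<d$''), because otherwise a permutation of the variables could violate the hypotheses of the cited lemma and the reduction would break. Granting that symmetry, the only care needed is to confirm that each off-diagonal pattern really is a permutation of \eqref{eq:alpha_beta_zero} and not some subtly different system—in particular that the ``mixed'' pair in my case always lands on the $\{s_3,s_4\}$ slot of \eqref{eq:alpha_beta_zero}, which follows because the two vanishing patterns pick out different partition classes, leaving the third class as the mixed one.
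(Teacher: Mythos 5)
Your proposal is correct and takes essentially the same route as the paper: the diagonal cases are dismissed because $\alpha_*=\beta_*=0$ forces $s_*=1$, and the off-diagonal cases reduce, after permuting $\{a,b,c,d\}$, to the system \eqref{eq:alpha_beta_zero} proved insoluble in \cite[Section 4]{Szalay:2013b} without the assumptions $p<q$ and $a<b<c<d$. Your explicit discussion of the $S_4$-action on the three pair-partitions simply spells out the relabelling bookkeeping that the paper's terse proof leaves implicit.
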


\begin{proof}
The proof of the lemma is more or less the content of \cite[Section 4]{Szalay:2013b}. Also note that the case that $\alpha_*=\beta_*=0$ 
would imply that $s_*=1$, with $*\in\{1,\dots,6\}$, which also yields a contradiction. 
\end{proof}

For the next lemma let us introduce the following notation for a fixed pair of primes $(p,q)$, with $p<q$. We write
$$ u_p=v_p(q^{p-1}-1) \quad \text{and} \quad u_q=v_q(p^{q-1}-1).$$

\begin{lemma}\label{lem:p-adic}
Let $p,q$ be odd primes (not necessarily $p<q$) and assume that $z=v_q(p^x-1)$. Then $z\leq u_q +\frac{x}{\log q}$. Moreover, if $2|\ord_q(p)$ and $z=v_q(p^x+1)$, then $z\leq u_q +\frac{x}{\log q}$.
If $2\nmid \ord_q(p)$, then $z=0$.
\end{lemma}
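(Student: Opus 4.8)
The plan is to analyze the $q$-adic valuation of expressions of the form $p^x\pm 1$ by reducing everything to the behavior of $\ord_q(p)$ and the lifting-the-exponent phenomenon. First I would recall the elementary fact that $v_q(p^x-1)>0$ holds if and only if $\ord_q(p)\mid x$, and treat the two parities of $\ord_q(p)$ separately. The claim about $z=v_q(p^x+1)$ in the case $2\nmid\ord_q(p)$ is the easiest: if the order is odd, then $p^x+1\equiv 0\bmod q$ would force $p^x\equiv -1$, whence $p^{2x}\equiv 1$ and $\ord_q(p)\mid 2x$; combined with the order being odd this gives $\ord_q(p)\mid x$, so $p^x\equiv 1$, contradicting $p^x\equiv -1$ unless $q=2$. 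Since $q$ is odd this is impossible, so $q\nmid p^x+1$ and therefore $z=0$.

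For the bound $z\le u_q+\frac{x}{\log q}$ in the two remaining cases, the key tool is lifting the exponent (LTE). In the case $z=v_q(p^x-1)$, if $\ord_q(p)\nmid x$ then $z=0$ and the inequality is trivial, so assume $\ord_q(p)\mid x$ and write $d=\ord_q(p)$. Applying LTE to $p^x-1=(p^d)^{x/d}-1$ gives $v_q(p^x-1)=v_q(p^d-1)+v_q(x/d)$. Now $v_q(p^d-1)$ is bounded in terms of $u_q=v_q(p^{q-1}-1)$: since $d\mid q-1$ (as $p^{q-1}\equiv 1$), a further LTE step shows $v_q(p^{q-1}-1)=v_q(p^d-1)+v_q((q-1)/d)$, and because $\gcd(q-1,q)=1$ we have $v_q((q-1)/d)=0$, giving $v_q(p^d-1)=u_q$. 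Thus $z=u_q+v_q(x/d)\le u_q+v_q(x)\le u_q+\frac{\log x}{\log q}$, and since the paper's convention evidently absorbs the logarithm into the notation $\frac{x}{\log q}$, I would present the bound as $z\le u_q+\frac{\log x}{\log q}$, matching the intended meaning that the valuation grows only logarithmically in $x$.

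The case $z=v_q(p^x+1)$ with $2\mid\ord_q(p)$ is handled by the companion LTE identity for sums, or by reducing to the difference case via $p^{2x}-1=(p^x-1)(p^x+1)$. Writing $d=\ord_q(p)=2e$, a solution of $p^x\equiv -1$ requires $e\mid x$ but $d\nmid x$, and then $v_q(p^x+1)=v_q(p^{2x}-1)-v_q(p^x-1)=v_q(p^{2x}-1)$ since $q\nmid p^x-1$ in this situation. Applying the previous paragraph's bound to $v_q(p^{2x}-1)$ yields $z\le u_q+\frac{\log(2x)}{\log q}$, which again gives the desired logarithmic control.

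The main obstacle I anticipate is the exact form of the stated inequality: the right-hand side is written $u_q+\frac{x}{\log q}$ rather than $u_q+\frac{\log x}{\log q}$, so I would first confirm whether the intended statement involves $\log x$ (the natural output of LTE) or whether $x$ here denotes something already logarithmic in the ambient notation of the paper; reconciling this is the only delicate point, since the underlying valuation estimate is a routine LTE computation. A secondary care point is verifying the edge conditions of LTE (namely $q\nmid p$ and $q$ odd, both guaranteed by $p,q$ distinct odd primes) so that the lifting-the-exponent hypotheses are genuinely met.
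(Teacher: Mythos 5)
Your proof is correct and takes essentially the same route as the paper: the paper likewise deduces the lemma from the lifting-the-exponent identity $v_q(s^n-1)=v_q(s-1)+v_q(n)$ for $s\equiv 1 \bmod q$ (citing Cohen), combined with the observation that $p^x\equiv -1 \bmod q$ forces $\ord_q(p)$ to be even with $\tfrac{\ord_q(p)}{2}\mid x$; your choice $s=p^{\ord_q(p)}$ rather than the paper's $s=p^{q-1}$ is the same idea carried out a bit more carefully. Your one worry about the right-hand side needs no reconciling: since $\log x\le x$, the bound $z\le u_q+\frac{\log x}{\log q}$ you obtain implies the stated (weaker) inequality $z\le u_q+\frac{x}{\log q}$, and the paper itself invokes the lemma in the sharper logarithmic form in its later applications.
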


\begin{proof}
The lemma is elementary and some related versions can be found in \cite[Lemma 2.1.22]{Cohen:NTI}. In particular, it is proved that $v_q(s^n-1)=v_q(s-1)+v_q(n)$ if $s\equiv 1 \mod q$.
Putting $s=p^{q-1}$ we obtain the first statement of the Lemma by noting that $v_q(n)<\frac{\log n}{\log q}$. 

To prove the second statement we note that $\Z/q\Z$ is cyclic and therefore $p^x\equiv -1 \mod q$ holds if and only if $\ord_q(p)$ is even and $\frac{\ord_q(p)}2 |x$.
The second statement is now obtained by a slight modification of the proof given in \cite[Lemma 2.1.22]{Cohen:NTI}.
\end{proof}

One can see Lemma \ref{lem:p-adic} as a lower bound for a very special linear form of two $q$-adic logarithms (cf. Yamada's work \cite{Yamada:2010} on upper bounds for $v_p(x^{p-1}-1)$).
However, we will also use lower bounds for linear forms in complex logarithms.
In particular, we will apply the very sharp bounds due to Laurent \cite{Laurent:2008} for linear forms in two logarithms:

\begin{lemma}[Laurent 2008]\label{lem:twologs}
Assume that $\gamma_1,\gamma_2$ be two positive, real, multiplicatively
independent elements in a number field of degree $D$ over $\Q$. Moreover, assume that also $\log \gamma_1$ and $\log \gamma_2$ are
positive and real. For $i=1,2$, let $a_i>1$ be a real numbers satisfying
$$ \log a_i \geq \max \{h(\gamma_i),|\log\gamma_i|/D,1/D\}.$$
Further, let $b_1$ and $b_2$ be two positive integers.
Define
$$b'=\frac {b_1}{D\log a_2}+\frac {b_2}{D\log a_1} \quad \text{and} \quad \log b=\max\left\{\log b'+0.38,12/D,1\right\}.$$
Then
$$ |b_2 \log\gamma_2-b_1 \log\gamma_1|\geq \exp \left(-23.4 D^4(\log b)^2 \log a_1 \log a_2\right). $$
\end{lemma}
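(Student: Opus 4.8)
I should note at the outset that the final statement is Laurent's sharp lower bound for linear forms in two logarithms, so the honest plan is to reproduce the interpolation-determinant method of \cite{Laurent:2008} rather than to reprove transcendence theory from scratch; for the paper itself one would simply cite \cite{Laurent:2008}. The scheme proceeds by contradiction: set $\Lambda=b_2\log\gamma_2-b_1\log\gamma_1$ and suppose that $|\Lambda|$ is smaller than the claimed bound. The whole idea is to manufacture an analytic object (a determinant) whose arithmetic nature forces it to be either zero or not too small, while the hypothesis on $|\Lambda|$ forces its analytic size to be extremely small; the clash between these two facts yields the inequality.

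The core construction is an interpolation determinant. I would fix integer parameters $L$ and $T$ (to be optimised at the very end) and form a square matrix whose entries are the values
$$\left(\frac{d}{dz}\right)^{t}\Bigl(z^{\sigma}\gamma_1^{\ell_1 z}\gamma_2^{\ell_2 z}\Bigr)\Big|_{z=s},$$
indexed on one side by the monomial data $(\ell_1,\ell_2)$ ranging over a two-dimensional box and on the other by the interpolation data, a derivation order $t$ together with an integer evaluation point $s$. Writing $\gamma_2^{\ell_2 z}=\gamma_1^{(b_1/b_2)\ell_2 z}\exp\bigl((\ell_2 z/b_2)\Lambda\bigr)$ exhibits each entry as a power of $\gamma_1$ times a factor that is close to $1$ because $|\Lambda|$ is small; this is exactly the mechanism that will make the determinant $\Delta$ analytically tiny.

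There are then three estimates to assemble. First, an arithmetic lower bound: after clearing denominators, $\Delta$ is an algebraic integer of degree at most $D$ whose height is controlled by $L$, $T$ and by $\log a_1,\log a_2$, so by the product formula (Liouville's inequality) one gets $\log|\Delta|\geq -c_1(L,T)\,D(\log a_1+\log a_2)$ whenever $\Delta\neq 0$. Second, an analytic upper bound: interpreting $\Delta$ as a value of an entire function that nearly vanishes to high order (the near-vanishing being a consequence of the smallness of $|\Lambda|$) and applying Schwarz's lemma gives $\log|\Delta|\leq -c_2(L,T)\log(1/|\Lambda|)+c_3(L,T)$. Third, a non-vanishing step: one must guarantee $\Delta\neq 0$, which follows from the multiplicative independence of $\gamma_1,\gamma_2$ through a zero estimate ensuring that the chosen monomials $z^{\sigma}\gamma_1^{\ell_1 z}\gamma_2^{\ell_2 z}$ stay linearly independent. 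Combining the three and balancing the dimension against $L$ and $T$ produces an inequality of the shape $\log(1/|\Lambda|)\leq C\,D^4(\log b)^2\log a_1\log a_2$.

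The genuine difficulty lies entirely in the constants. The qualitative bound $|\Lambda|\geq\exp(-C D^4(\log b)^2\log a_1\log a_2)$ drops out of the scheme above with little effort, but pinning down the explicit value $23.4$, together with the auxiliary quantity $b'$, the cut-offs $12/D$ and $1$, and the additive correction $0.38$ in $\log b$, requires a delicate simultaneous optimisation of $L$ and $T$, sharp height bookkeeping in the arithmetic step, and Laurent's refined Schwarz estimate in the analytic step. Since it is precisely the smallness of this constant that makes the lemma strong enough to yield the explicit bound $\max\{A\log p,B\log q\}<52038\log p\log q$ in Section~\ref{Sec:UB}, I would not attempt to re-optimise it here and would instead invoke the statement of \cite{Laurent:2008} directly.
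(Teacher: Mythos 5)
Your proposal takes essentially the same route as the paper: the paper's entire proof is a citation, and your (accurate) sketch of Laurent's interpolation-determinant method --- the arithmetic Liouville bound, the Schwarz-lemma analytic bound, and the zero estimate guaranteeing non-vanishing --- is background that the paper does not reproduce either. The one substantive point your proposal misses is that the lemma is \emph{not} a verbatim statement from \cite{Laurent:2008}: Laurent's Corollary~2 is a one-parameter family of bounds with a free integer parameter $m$, and the paper's proof consists precisely of choosing $m=12$ in that corollary. This choice is what produces the specific constant $23.4$, the cut-off $12/D$ in the definition of $\log b$ (note the $12$ there is exactly $m$), and the correction $0.38$; so ``invoking the statement directly'' is not quite enough --- one must record the specialization that turns Laurent's parametric result into the constants quoted here.
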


\begin{proof}
 Choose $m=12$ in \cite[Corollary 2]{Laurent:2008}. 
\end{proof}

The next lemma is part of the results derived in \cite{Szalay:2015}:

\begin{lemma}\label{lem:max_pq}
 If there exists a $\{p,q\}$-Diophantine quadruple, then $\max\{p,q\}>10^5$.
\end{lemma}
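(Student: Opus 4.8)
The statement is the contrapositive of the assertion that no $\{p,q\}$-Diophantine quadruple exists whenever both primes satisfy $p<q\le 10^5$. Since $10^5$ is not prime, this amounts to resolving the finitely many prime pairs $(p,q)$ with $p<q<10^5$ (roughly $5\times 10^7$ of them), so the plan is a finite verification and the real work is to make each individual case computable. I would organize the argument for one fixed pair $(p,q)$ in three stages: first bound the exponents $\alpha_i,\beta_i$ absolutely, then reduce this bound to a small range, and finally enumerate and eliminate the surviving candidates using the structural lemmas above.

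For the first stage I would extract a small linear form in two logarithms from the system \eqref{eq:SUnit}. Assuming $a<b<c<d$, the value $s_6=cd+1$ dominates, and the identities in \eqref{eq:SUnit} force ratios such as $s_1s_6/(s_2s_5)$ to be close to $1$; taking logarithms yields a form
\[
\Lambda=(\alpha_1+\alpha_6-\alpha_2-\alpha_5)\log p+(\beta_1+\beta_6-\beta_2-\beta_5)\log q
\]
of very small absolute value. Choosing the identity and grouping appropriately, and using the coincidences of minimal exponents from Lemma \ref{lem:min_al} together with the size relations $a<b<c<d$, one arranges $|\Lambda|$ to be exponentially small in $\max\{A\log p,B\log q\}$. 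Applying Lemma \ref{lem:twologs} with $\gamma_1=p$ and $\gamma_2=q$ then gives, whenever $\Lambda\ne 0$, a lower bound of the form $\exp(-c(\log N)^2)$, where $N$ bounds the integer coefficients of $\Lambda$, and comparing the two estimates bounds $A$ and $B$ absolutely. The degenerate case $\Lambda=0$ forces $s_1s_6=s_2s_5$; combined with the first identity of \eqref{eq:SUnit} this also gives $s_1+s_6=s_2+s_5$, hence $\{s_1,s_6\}=\{s_2,s_5\}$, which is impossible since $s_1=ab+1$ is strictly smaller than each of $s_2,s_5,s_6$.

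For the second stage, the Baker-type bound just obtained is far too large for direct enumeration, so I would apply a standard reduction procedure (continued-fraction or LLL-based Baker--Davenport reduction) to $\Lambda$, bringing the bound on each exponent down to a range of a few units. In the third stage, with all $\alpha_i,\beta_i$ confined to a small box, I would enumerate the finitely many candidate sextuples $(s_1,\dots,s_6)$ and discard the inconsistent ones: Lemma \ref{lem:tripel} rules out the forbidden exponent coincidences, Lemma \ref{lem:min_al} forces the prescribed pairs of minimal exponents to agree, and Lemmas \ref{lem:abcd-div} and \ref{lem:s4_low_bound} bound and constrain $a,b,c,d$ so that only a handful of candidate quadruples remain. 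For each survivor one checks directly that the six defining relations cannot hold simultaneously with $a<b<c<d$.

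The main obstacle is computational feasibility rather than any single hard estimate. The reduction step must succeed automatically and correctly for every one of the roughly $5\times 10^7$ prime pairs below $10^5$, and the enumeration after reduction must be fast enough to terminate in practice; the combinatorial constraints supplied by Lemmas \ref{lem:tripel}, \ref{lem:min_al}, \ref{lem:abcd-div} and \ref{lem:s4_low_bound} are precisely what collapse the search space from the Baker bound down to a tractable size and make the full verification run to completion. This is exactly the algorithm carried out in \cite{Szalay:2015}, whose output establishes the lemma.
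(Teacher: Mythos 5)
Your proposal is correct and, in substance, coincides with the paper's proof: the paper disposes of this lemma in one line by citing Theorem 1.3 of \cite{Szalay:2015}, and your three-stage outline (a Baker-type bound via Lemma \ref{lem:twologs}, continued-fraction reduction, then enumeration constrained by Lemmas \ref{lem:tripel}, \ref{lem:min_al}, \ref{lem:abcd-div} and \ref{lem:s4_low_bound}) is a faithful reconstruction of the computation carried out there, which the paper itself recapitulates in Section \ref{Sec:Small}. Since you ultimately rest the argument on the same finite verification in \cite{Szalay:2015}, the two proofs are essentially identical.
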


\begin{proof}
 This is part of Theorem 1.3 in \cite{Szalay:2015}.
\end{proof}

Finally we want to discuss the so-called $L$-notation (see also \cite[Section 3.1]{Heuberger:2004}) which is an exact form of the $O$-notation. 
Let $c$ be a real number, assume that $f(x),g(x)$ and $h(x)$ are real functions and $h(x)>0$ for $|x|<c$. We will write
$$f(x)=g(x)+L_c (h(x))$$
if
$$g(x)-h(x)\leq f(x) \leq g(x)+h(x)$$
holds for all $|x|<c$. The use of the $L$-notation is like the use of the $O$-Notation but with the advantage to have an explicit bound for the error term.
The following lemma is obvious from the definition of the $L$-notation and the geometric series expansion.

\begin{lemma}\label{lem:L-geometric}
 For some integer $n\geq 0$ and some real number $0<c<1$ we have that
 $$\frac{1}{1-x}=1+\dots+x^n+L_{c}\left(\frac{1}{1-c} x^{n+1}\right).$$
\end{lemma}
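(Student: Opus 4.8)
The plan is to reduce everything to the elementary finite geometric series identity and then control the tail by hand. Multiplying out $(1-x)(1+x+\cdots+x^n)=1-x^{n+1}$ gives, for every $x\neq 1$, the exact identity
$$\frac{1}{1-x}=1+x+\cdots+x^n+\frac{x^{n+1}}{1-x}.$$
Thus the difference between $\frac{1}{1-x}$ and the partial sum $g(x)=1+\cdots+x^n$ is precisely the remainder $R(x)=\frac{x^{n+1}}{1-x}$, and by the definition of the $L$-notation recalled before the lemma it suffices to verify that $R(x)$ lies in the two-sided interval $[-h(x),h(x)]$ prescribed by the error function $h(x)=\frac{1}{1-c}x^{n+1}$.

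Next I would estimate the factor $\frac{1}{1-x}$ uniformly on the admissible range $|x|<c$. Since $0<c<1$ and $x\le|x|<c$, we have $1-x>1-c>0$, and hence $0<\frac{1}{1-x}<\frac{1}{1-c}$. Combining this with the identity above yields
$$|R(x)|=\frac{|x|^{n+1}}{1-x}\le\frac{1}{1-c}\,|x|^{n+1},$$
which is the desired bound; in the applications one always has $x\ge0$, so $x^{n+1}=|x|^{n+1}$ and $R(x)$ shares the sign of $x^{n+1}$, so that $R(x)\in[-h(x),h(x)]$ as required. Invoking the definition of $L_c(\cdot)$ then gives exactly the claimed expansion.

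The statement is entirely elementary, so there is no real obstacle; the two points deserving a moment of care are that the hypothesis $c<1$ is precisely what keeps $1-x$ bounded away from $0$ uniformly on $|x|<c$ (guaranteeing both the positivity of $h$ and the finiteness of the bound), and that the remainder must be shown to satisfy the genuine two-sided $L_c$-estimate rather than a mere asymptotic $O$-type bound. Once the exact remainder $\frac{x^{n+1}}{1-x}$ is isolated, both are immediate.
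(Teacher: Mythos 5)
Your proof is correct and follows exactly the route the paper has in mind: the paper offers no written proof, declaring the lemma ``obvious from the definition of the $L$-notation and the geometric series expansion,'' which is precisely your argument of isolating the exact remainder $\frac{x^{n+1}}{1-x}$ from the finite geometric identity and bounding $\frac{1}{1-x}\leq\frac{1}{1-c}$ on $|x|<c$. Your remark that the two-sided $L_c$-estimate (and the positivity of $h$) genuinely requires $x\geq 0$ when $n+1$ is odd is a careful and worthwhile observation, consistent with the paper's own note that the lemma is only ever applied with $x=\frac{1}{p^{\alpha}q^{\beta}}>0$.
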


In all instances we will apply Lemma \ref{lem:L-geometric} only in the case that $x$ is of the form $x=\frac 1{p^\alpha q^\beta}$, with $\beta>0$. In view of Lemma \ref{lem:max_pq}
we have that $x<10^{-5}$ and by dropping the index of the $L$-notation we have 
$$\frac{1}{1-x}=1+x+L(1.001 x^2).$$ 

\section{An upper bound for the exponents}\label{Sec:UB}

The purpose of this section is to derive sharp upper bounds for $A$ and $B$ under the assumption that at least one of $p$ and $q$ is $\equiv 3 \mod 4$. However most
of the intermediate results of this section remain true for general prime pairs $(p,q)$. To be as general as possible we formulate and prove these intermediate results
without assuming that $p$ or $q$ is $\equiv 3 \mod 4$. However we start with the following lemma:

\begin{lemma}\label{lem:ub_beta12}
 Let $p<q$ and assume that a $\{p,q\}$-Diophantine quadruple exists. Moreover let
 $$A_1:=24.92 \left(\log \left(\frac{4.001A}{\log q}\right)\right)^2 \quad \text{and} \quad B_1:=24.92 \left(\log \left(\frac{4.001B}{\log p}\right)\right)^2.$$
 If we assume that $B>27826 \log p$, then we have 
 $$ \beta_1,\beta_2<B_1 \log p \quad \text{and} \quad \alpha_1,\alpha_2<B_1 \log q.$$
 If we assume that $A>27826 \log q$, then we have 
 $$ \beta_1,\beta_2<A_1 \log p \quad \text{and} \quad \alpha_1,\alpha_2<A_1 \log q.$$
\end{lemma}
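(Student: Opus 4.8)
The plan is to reduce everything to upper bounds for $\log s_1$ and $\log s_2$: since $\log s_i=\alpha_i\log p+\beta_i\log q$ is a sum of two nonnegative terms, an upper bound for $\log s_1$ (resp. $\log s_2$), divided by $\log q$ or by $\log p$, yields all four claimed bounds on $\alpha_1,\beta_1$ (resp. $\alpha_2,\beta_2$). To bound $\log s_1$ I would use the first equation of \eqref{eq:SUnit} in multiplicative form. A direct expansion gives
\[s_1s_6-s_2s_5=(ab+1)(cd+1)-(ac+1)(bd+1)=(c-b)(d-a)>0,\]
so the two-term linear form
\[\Lambda_{12}:=\log\frac{s_1s_6}{s_2s_5}=(\alpha_1+\alpha_6-\alpha_2-\alpha_5)\log p+(\beta_1+\beta_6-\beta_2-\beta_5)\log q\]
is strictly positive. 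Since $s_2s_5>abcd$ and $(c-b)(d-a)<cd$, it is also very small:
\[0<\Lambda_{12}\le\frac{(c-b)(d-a)}{s_2s_5}<\frac{cd}{abcd}=\frac1{ab}=\frac1{s_1-1}.\]
The second and third equations of \eqref{eq:SUnit} give in the same way $s_2s_5-s_3s_4=(d-c)(b-a)>0$, hence a positive form $\Lambda_{23}:=\log\bigl(s_2s_5/(s_3s_4)\bigr)$ with $0<\Lambda_{23}<1/(s_2-1)$. Because each $\Lambda$ is positive yet smaller than $1/(s_i-1)$, its two coefficients must have opposite signs and be nonzero, so each form is genuinely of the difference type $b_2\log q-b_1\log p$ required by Lemma \ref{lem:twologs}.

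I would then apply Laurent's lower bound (Lemma \ref{lem:twologs}) to $\Lambda_{12}$ and $\Lambda_{23}$ with $\gamma_1=p$, $\gamma_2=q$, $D=1$ and $\log a_1=\log p$, $\log a_2=\log q$; this is legitimate because $p,q$ are distinct primes (hence multiplicatively independent) and $q>10^5$ by Lemma \ref{lem:max_pq}, so the geometric-series corrections of Lemma \ref{lem:L-geometric} are negligible. For $\Lambda_{12}$ this yields $\Lambda_{12}\ge\exp(-23.4\,(\log b)^2\log p\log q)$, and comparing with $\Lambda_{12}<1/(s_1-1)$ gives $\log(s_1-1)<23.4\,(\log b)^2\log p\log q$. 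Dividing $\log s_1=\alpha_1\log p+\beta_1\log q$ by $\log q$ and by $\log p$ then gives $\beta_1<23.4\,(\log b)^2\log p$ and $\alpha_1<23.4\,(\log b)^2\log q$, up to a negligible correction coming from $s_1$ versus $s_1-1$. The form $\Lambda_{23}$ produces the same bounds for $\alpha_2,\beta_2$.

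It remains to replace $23.4\,(\log b)^2$ by $B_1$ (resp. $A_1$). Here $\log b=\max\{\log b'+0.38,\,12,\,1\}$ with $b'=b_1/\log q+b_2/\log p$, where $b_1,b_2$ are the moduli of the coefficients of $\log p,\log q$. Crudely $b_1\le 2A$ and $b_2\le 2B$; Lemma \ref{lem:min_al} (the two smallest of $\alpha_1,\alpha_2,\alpha_5,\alpha_6$, and of the corresponding $\beta$'s, coincide), together with the coincidences excluded after Lemma \ref{lem:tripel}, produces a cancellation sharpening this to $b_1\le A$, $b_2\le B$ in the decisive configurations. In the regime where $B/\log p$ is the dominant ratio this gives $b'\le A/\log q+B/\log p\le 2B/\log p$, so that $\log b\le\log(2B/\log p)+0.38=\log(2.92\,B/\log p)\le\log(4.001\,B/\log p)$, whence $23.4\,(\log b)^2<B_1$; this is the case $B>27826\log p$. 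The symmetric estimate in the regime where $A/\log q$ dominates gives the bounds with $A_1$, the case $A>27826\log q$. The threshold $27826$ is chosen precisely so that $\log(4.001\,B/\log p)$ exceeds the constant branch $12$ of the maximum and the resulting numerical inequality closes.

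The main obstacle is this last calibration. The gap between Laurent's constant $23.4$ and the stated $24.92$, the factor $4.001$, and the threshold $27826$ are all tightly tuned, and one must carry the lower-order error terms explicitly—the difference between $s_1$ and $s_1-1$, the correction in passing from $\log s_1$ to $\beta_1\log q$, and the exact bound for $b'$. This is exactly where the coincidence of the two smallest exponents from Lemma \ref{lem:min_al} has to be exploited, since the most naive coefficient bounds $b_1\le 2A$, $b_2\le 2B$ only barely fail to deliver the claimed constants.
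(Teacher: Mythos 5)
Your overall architecture is the same as the paper's: form Stewart--Tijdeman ratios of the $s_i$ (you take $s_1s_6/(s_2s_5)$ and $s_2s_5/(s_3s_4)$; the paper takes $T=s_1s_6/(s_3s_4)$ and $T'=s_2s_5/(s_3s_4)$, which works equally well), apply Laurent's bound (Lemma~\ref{lem:twologs}) with $\gamma_1=p$, $\gamma_2=q$, $D=1$, and compare the resulting lower bound with the upper bound of size $1/s_1$ (resp.\ $1/s_2$) to bound $\log s_1$ and $\log s_2$, hence all four exponents. Your remark that positivity plus smallness forces both coefficients to be nonzero and of opposite sign is a legitimate point the paper glosses over.

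However, the step you yourself call ``the main obstacle'' --- bounding $b'$, and hence $\log b$, purely in terms of $B$ --- is where your argument has a genuine gap, and your proposed repair does not work. Under the hypothesis $B>27826\log p$ there is no control whatsoever on $A$: nothing forbids $A/\log q\gg B/\log p$, so restricting to ``the regime where $B/\log p$ is the dominant ratio'' is an unjustified extra assumption, not a harmless normalization (this is exactly why the lemma has two separate statements with separate hypotheses; in the later applications, e.g.\ Lemma~\ref{lem:Alt1_bound}, one assumes only $B>27826\log p$ and knows nothing about which ratio dominates, so a dominant-regime version of the lemma would not suffice). Moreover, the sharpening $b_1\le A$, $b_2\le B$ that you want to extract from Lemma~\ref{lem:min_al} fails in general: for the quadruple $(\alpha_1,\alpha_2,\alpha_5,\alpha_6)$ the two smallest entries may well be $\alpha_1=\alpha_6$ (or $\alpha_2=\alpha_5$), coincidences not excluded by Lemma~\ref{lem:tripel}, and then $|\alpha_1+\alpha_6-\alpha_2-\alpha_5|$ can be of size $2A$. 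The idea your write-up is missing is the paper's: the near-vanishing of the linear form itself ties the two coefficients together, so the $\log p$-coefficient can be bounded by $B$ rather than by $A$. Writing the form as $A'\log p+B'\log q$ with $|B'|\le 2B$, the inequality $1/2<p^{A'}q^{B'}<3/2$ gives $|A'|\log p\le |B'|\log q+\log(3/2)\le 2B\log q+\log(3/2)$, hence
$$b'=\frac{|A'|}{\log q}+\frac{|B'|}{\log p}\le \frac{4B}{\log p}+\frac{\log(3/2)}{\log p\log q}<\frac{4B}{\log p}+0.117,$$
with no reference to $A$ at all; the same computation applies verbatim to your $\Lambda_{12}$. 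The constants in the statement --- the factor $4.001$, the constant $24.92$, and the threshold $27826$ (which ensures $\log b'+0.38$ clears the constant branch $12$ in Laurent's maximum) --- are calibrated to this $4B$ bound, not to your claimed $2B$. Once the coefficient bound is repaired in this way, the rest of your argument goes through essentially as in the paper.
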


\begin{proof}
 We will only prove the first statement, since the second statement is obtained by exchanging the roles of $p$ and $q$. 
 
 As already Stewart and Tijdeman \cite{Stewart:1997} observed, estimating the quantity $T=\frac{s_1s_6}{s_3s_4}$ proves to be useful. In particular, we obtain
 \begin{equation}\label{eq:Stewart}
 T=p^{A'}q^{B'}=\frac{(ab+1)(cd+1)}{(ad+1)(bc+1)}=1+\frac{(d-b)(c-a)}{abcd+ad+bc+1}<1+\frac{1}{ab}\leq 1+\frac 32 \cdot \frac{1}{ab+1}
 \end{equation}
 with $A'=\alpha_1+\alpha_6-\alpha_3-\alpha_4$ and $B'= \beta_1+\beta_6-\beta_3-\beta_4$. Moreover, we know that
 $ab\geq 2$ and therefore $1/2<p^{A'}q^{B'}<3/2$. This implies the following inequalities:
$$|A'|\leq \frac{2B \log q+\log(3/2)}{\log p}\quad \text{and} \quad |B'|\leq 2B.$$  
 We want to apply Lemma \ref{lem:twologs} with
 \begin{gather*}
  \gamma_1=p, \quad \gamma_2=q, \quad b_1=|A'|, \quad b_2=|B'|,\\
  D=1, \quad \log a_1=\log p, \quad \log a_2=\log q,
 \end{gather*}
 and therefore we estimate
 $$ b'=\frac {b_1}{D\log a_2}+\frac {b_2}{D\log a_1}\leq \frac{4B}{\log p}+ \frac{\log (3/2)}{\log p \log q}< \frac{4B}{\log p}+0.117.$$
 Now Lemma \ref{lem:twologs} yields
 \begin{equation}\label{eq:lin_form_ieq_I}
 \begin{split}
  \log |\log T|>&-23.4 \log p \log q \left( \log\left(\frac{4B}{\log p}+0.117\right)+0.38\right)^2\\
  >&- 24.91\log p \log q \left( \log \left(\frac{4.001 B}{\log p}\right)\right)^2
  \end{split}
  \end{equation}
 provided that $\log\left(\frac{4B}{\log p}+0.117\right)+0.38>12$, i.e. that $B>27826 \log p$.
 
 On the other hand we know that $|\log(1+x)|<2|x|$ provided that $|x|<1/2$ and therefore inequality \eqref{eq:Stewart} yields
 \begin{equation}\label{eq:lin_form_ieq_II}
 \log |\log T|< \log\left(\frac{3}{p^{\alpha_1}q^{\beta_1}}\right)<\log 3 - \beta_1 \log q. 
 \end{equation}
Comparing the lower bound \eqref{eq:lin_form_ieq_I} with the upper bound \eqref{eq:lin_form_ieq_II} we obtain
$$\beta_1<24.91 \log p \left( \log \left(\frac{4.001B}{\log p}\right)\right)^2+\frac{\log 3}{\log q}<24.92 \log p \left( \log \left(\frac{4.001B}{\log p}\right)\right)^2.$$

We obtain the inequality for $\beta_2$ by considering instead of $T$ the quantity
$$
 T'= p^{A''}q^{B''}=\frac{(ac+1)(bd+1)}{(ad+1)(bc+1)}=1+\frac{(d-c)(b-a)}{abcd+ad+bc+1}<1+\frac{1}{ac}\leq 1+\frac 32 \cdot\frac{1}{ac+1}
$$
with $A''=\alpha_2+\alpha_5-\alpha_3-\alpha_4$ and $B''=\beta_1+\beta_6-\beta_3-\beta_4$. Similar computations as above yield the same upper bound for $\beta_2$.

The upper bound for $\alpha_1$ is obtained by noting that instead of \eqref{eq:lin_form_ieq_II} one can use the upper bound
$$ \log |\log T|<\log\left(\frac{3}{p^{\alpha_1}q^{\beta_1}}\right)<\log 3 - \alpha_1 \log p. $$
 A slight modification finally yields an upper bound for $\alpha_2$. 
\end{proof}

Our next step is to show that with $ab+1$ and $ac+1$ also $bc+1$ stays small. To be more precise we prove

\begin{lemma}\label{lem:ub_beta4}
 Let $p<q$ and assume that a $\{p,q\}$-Diophantine quadruple exists. 
 If we assume that $B>27826 \log p$, then we have
 $$\beta_4<4B_1 \log p \quad \text{and} \quad \alpha_4<4B_1 \log q.$$
 If we assume that $A>27826 \log q$, then we have 
 $$\beta_4<4A_1 \log p \quad \text{and} \quad \alpha_4<4A_1 \log q.$$
\end{lemma}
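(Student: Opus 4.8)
Lemma~\ref{lem:ub_beta4} bounds $\beta_4$ and $\alpha_4$—the exponents of $bc+1=s_4$. The previous lemma (Lemma~\ref{lem:ub_beta12}) bounded the exponents of $s_1=ab+1$ and $s_2=ac+1$. So the idea is: given that $ab+1$ and $ac+1$ are "small" (small exponents), show $bc+1$ must also be small, with a loss of only a constant factor (here $4$).

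**How to connect $s_4$ to $s_1,s_2$.** The natural tool is Lemma~\ref{lem:s4_low_bound}, which gives $\gcd(s_4,s_2)\gcd(s_4,s_1)<s_4$. I should think about what the gcd's are. Write $s_4 = p^{\alpha_4}q^{\beta_4}$, $s_1=p^{\alpha_1}q^{\beta_1}$, $s_2=p^{\alpha_2}q^{\beta_2}$. Then $\gcd(s_4,s_1)=p^{\min(\alpha_4,\alpha_1)}q^{\min(\beta_4,\beta_1)}$ and similarly for $s_2$. Now Lemma~\ref{lem:min_al} says that among $(\alpha_2,\alpha_3,\alpha_4,\alpha_5)$ the two smallest coincide, and among $(\alpha_1,\alpha_2,\alpha_5,\alpha_6)$ the two smallest coincide, etc.—and the same for $\beta$. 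The plan is to use these coincidences to argue that $\min(\alpha_4,\alpha_1)+\min(\alpha_4,\alpha_2)$ is large, i.e. close to $\alpha_4$; taking $q$-adic and $p$-adic valuations of the inequality $\gcd(s_4,s_2)\gcd(s_4,s_1)<s_4$ should then force $\alpha_4$ to be bounded in terms of $\alpha_1,\alpha_2$ (hence by $B_1\log q$ via Lemma~\ref{lem:ub_beta12}).

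**Carrying it out.** Taking the $q$-adic valuation of $\gcd(s_4,s_2)\gcd(s_4,s_1)<s_4$ is not directly valid (it's an inequality, not a divisibility), but I can take logarithms: since $s_4<q^{\beta_4}p^{\alpha_4}\le q^{\beta_4+1}$ roughly, and $\gcd(s_4,s_1)\ge q^{\min(\beta_4,\beta_1)}$ etc., the inequality $\gcd(s_4,s_2)\gcd(s_4,s_1)<s_4$ gives
$$
\min(\beta_4,\beta_1)+\min(\beta_4,\beta_2)\le \beta_4+\frac{\alpha_4\log p}{\log q}+O(1),
$$
after bounding the $p$-parts. Now I would like to conclude $\min(\beta_4,\beta_1)=\beta_1$ and $\min(\beta_4,\beta_2)=\beta_2$ (i.e. $\beta_4$ is among the largest), which would give $\beta_1+\beta_2\le\beta_4+\cdots$—the wrong direction. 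So instead the productive case analysis is the reverse: if $\beta_4\le\beta_1$ and $\beta_4\le\beta_2$ then the left side is $2\beta_4$ and we get $\beta_4\le\alpha_4\log p/\log q+O(1)$, which combined with the symmetric $p$-adic inequality $\alpha_4\le\beta_4\log q/\log p+O(1)$ would be contradictory unless both are small. If on the other hand $\beta_4$ exceeds one of $\beta_1,\beta_2$, then that minimum equals $\beta_1$ or $\beta_2$, which is already bounded by $B_1\log p$ from Lemma~\ref{lem:ub_beta12}; I then need Lemma~\ref{lem:min_al} to control the \emph{other} minimum. This is where the coincidence-of-smallest-two structure does the real work: it lets me replace $\min(\beta_4,\beta_i)$ by one of the small already-bounded exponents.

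**The main obstacle.** I expect the crux to be the bookkeeping in the case analysis: correctly using Lemma~\ref{lem:min_al} (the two smallest of $(\alpha_2,\alpha_3,\alpha_4,\alpha_5)$, and separately of the $\beta$-quadruple) together with the exclusions from Lemma~\ref{lem:tripel} to show that whenever $\beta_4$ is large, the minima $\min(\beta_4,\beta_1),\min(\beta_4,\beta_2)$ are forced to equal small bounded exponents, so that $\gcd(s_4,s_2)\gcd(s_4,s_1)$ cannot stay below $s_4$ unless $\beta_4<4B_1\log p$. The factor $4$ presumably arises from combining the two gcd-contributions and the $p$-part/$q$-part cross term. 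The $\alpha_4$ bound follows symmetrically by reading off $p$-adic rather than $q$-adic valuations, and the second half of the lemma (the $A>27826\log q$ hypothesis) follows verbatim by swapping the roles of $p$ and $q$, exactly as in the proof of Lemma~\ref{lem:ub_beta12}.
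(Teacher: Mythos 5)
There is a genuine gap, and it is structural: your central tool, Lemma~\ref{lem:s4_low_bound}, cannot produce an upper bound on $\beta_4$. That lemma bounds the gcd product \emph{from above} by $s_4$, so the inequality
$$\min(\alpha_4,\alpha_1)\log p+\min(\alpha_4,\alpha_2)\log p+\min(\beta_4,\beta_1)\log q+\min(\beta_4,\beta_2)\log q<\alpha_4\log p+\beta_4\log q$$
only gets \emph{easier} to satisfy as $\alpha_4,\beta_4$ grow: if $\alpha_4,\beta_4$ are both enormous compared to the (already bounded) exponents of $s_1,s_2$, the left side is at most $\log(s_1s_2)$, which is small, and no contradiction arises. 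Your closing claim that the gcd product ``cannot stay below $s_4$ unless $\beta_4<4B_1\log p$'' is exactly backwards. The case analysis you sketch reflects this: the only case where you extract anything is $\beta_4\le\beta_1,\beta_2$ \emph{and} $\alpha_4\le\alpha_1,\alpha_2$ (which is indeed contradictory), but in every remaining case the minima are bounded quantities sitting on the small side of the inequality, and $\beta_4$ appears only on the large side --- so no bound follows, no matter how the bookkeeping with Lemma~\ref{lem:min_al} and Lemma~\ref{lem:tripel} is arranged.

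What is missing is an upper bound on $s_4$ itself in terms of $s_1$ and $s_2$, and this is supplied by a one-line elementary observation that your proposal never states: since $a\ge 1$,
$$bc+1<(ab+1)(ac+1)=s_1s_2=p^{\alpha_1+\alpha_2}q^{\beta_1+\beta_2}.$$
This is the paper's entire proof. Combined with Lemma~\ref{lem:ub_beta12} (valid under $B>27826\log p$), which gives $\alpha_1,\alpha_2<B_1\log q$ and $\beta_1,\beta_2<B_1\log p$, one gets
$$q^{\beta_4}\le s_4<\exp\left(2B_1\log q\log p+2B_1\log p\log q\right)=\exp\left(4B_1\log p\log q\right),$$
hence $\beta_4<4B_1\log p$; reading off $p^{\alpha_4}\le s_4$ instead gives $\alpha_4<4B_1\log q$, and swapping $p$ and $q$ handles the second half. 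The factor $4$ is simply $2+2$ from the four bounded exponents of $s_1s_2$, not a product of gcd cross terms.
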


\begin{proof}
 We only prove the upper bound for $\beta_4$ in the case that $B>27826 \log p$. All other instances are obtained by slight modifications of the argument. However, the upper bound for $\beta_4$ is obtained by the
 following inequality
 $$  \exp(\beta_4 \log q) \leq  bc+1 <(ab+1)(bc+1) =p^{\alpha_1+\alpha_2}q^{\beta_1+\beta_2} <\exp(4 B_1 \log p \log q).$$
\end{proof}

The Lemmas \ref{lem:ub_beta12} and \ref{lem:ub_beta4} show that the exponents $\beta_1,\beta_2,\beta_4$ and $\alpha_1,\alpha_2,\alpha_4$ are rather small and therefore also the $S$-units $s_1,s_2,s_4$ stay small.
The next lemma shows that if one further $S$-unit is small, then all exponents are small.

\begin{lemma}\label{lem:Alt1_bound}
 Let $*\in\{3,5,6\}$ and assume that the following two inequalities hold
 \begin{itemize}
 \item $\beta_*\leq \max\{\beta_1,\beta_2,\beta_4\}$ and 
 \item $\alpha_*\leq \max\{\alpha_1,\alpha_2,\alpha_4\}$.
 \end{itemize}
 Then $B<34990 \log p$ and $A<34990 \log q$.
\end{lemma}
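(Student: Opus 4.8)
The plan is to show that under the two hypotheses the product $abcd$ is bounded, from which every $s_j$ and hence every exponent is bounded. The only case needing real work is the one where at least one of $A,B$ is large: if both $B\le 27826\log p$ and $A\le 27826\log q$ hold, then $B<34990\log p$ and $A<34990\log q$ are immediate since $27826<34990$. So I would assume, say, $B>27826\log p$ (the case $A>27826\log q$ is symmetric, with $B_1$ replaced by $A_1$ and the roles of $p,q$ interchanged). By Lemmas~\ref{lem:ub_beta12} and~\ref{lem:ub_beta4} the units $s_1,s_2,s_4$ are small; concretely $\log s_1,\log s_2<2B_1\log p\log q$, and since $s_4=bc+1<(ab+1)(ac+1)=s_1s_2$ also $\log s_4<4B_1\log p\log q$.

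The crucial step is to bound $s_*$. From the hypotheses $\alpha_*\le\max\{\alpha_1,\alpha_2,\alpha_4\}$ and $\beta_*\le\max\{\beta_1,\beta_2,\beta_4\}$, so
\[
\log s_*=\alpha_*\log p+\beta_*\log q\le \max_{i\in\{1,2,4\}}(\alpha_i\log p)+\max_{i\in\{1,2,4\}}(\beta_i\log q).
\]
I would argue this is at most $6B_1\log p\log q$: if the two maxima are attained at a common index $i$, their sum is simply $\log s_i<4B_1\log p\log q$; if they are attained at distinct indices $i_0\ne i_1$, then $\alpha_{i_0}\log p\le\log s_{i_0}$ and $\beta_{i_1}\log q\le\log s_{i_1}$, so the sum is at most $\log s_{i_0}+\log s_{i_1}\le\log s_4+\max\{\log s_1,\log s_2\}<4B_1\log p\log q+2B_1\log p\log q$. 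The point, which I expect to be the main obstacle to getting the stated constant, is that the individual exponent bounds for $s_4$ cannot both be sharp at once, because $\log s_4$ itself is only of size $4B_1\log p\log q$; this is exactly what rules out the naive estimate $\log s_*<8B_1\log p\log q$ that would otherwise spoil the numerics.

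Next I would feed this into the factorization of $abcd$. Depending on $*$ one has $abcd=(s_3-1)(s_4-1)$ (if $*=3$), $abcd=(s_2-1)(s_5-1)$ (if $*=5$), or $abcd=(s_1-1)(s_6-1)$ (if $*=6$); in each case one factor is a small unit among $s_1,s_2,s_4$ and the other is $s_*$. Using $\log s_4<4B_1\log p\log q$ in the worst case $*=3$, and $\log s_1,\log s_2<2B_1\log p\log q$ otherwise, together with $\log s_*<6B_1\log p\log q$, gives $\log(abcd)<10B_1\log p\log q$ uniformly. Since each $s_j=xy+1$ with $xy\mid abcd$ satisfies $s_j\le abcd+1$, and since $q^{B}\le s_j$ for the index attaining $B$, I obtain $B\log q\le\log(abcd+1)<10B_1\log p\log q+\varepsilon$ with $\varepsilon$ negligible (as $q>10^5$ by Lemma~\ref{lem:max_pq}). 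Hence $B<10B_1\log p=249.2\bigl(\log(4.001B/\log p)\bigr)^2\log p$.

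Finally I would solve this transcendental inequality. Writing $x=B/\log p$ it reads $x<249.2\bigl(\log(4.001x)\bigr)^2$; since the right-hand side at $x=34990$ evaluates to just under $34990$ while it grows strictly slower than $x$ for $x\ge 34990$, no such $x$ can satisfy it, so $B<34990\log p$. This forces $B_1<3499$, and the identical chain $A\log p\le\log(abcd+1)<10B_1\log p\log q$ then yields $A<10B_1\log q<34990\log q$. The two delicate points are the factor‑$6$ estimate for $\log s_*$ and verifying that the closing numerics hold with the precise constant $34990$.
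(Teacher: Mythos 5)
Your proof is correct, and it lands on the paper's key inequality $q^B<\exp(10B_1\log p\log q)$ — but via a different decomposition, and the step you flag as the main obstacle is in fact avoidable. The paper keeps the naive estimate $\log s_*<8B_1\log p\log q$ (simply inserting the exponent bounds of Lemmas \ref{lem:ub_beta12} and \ref{lem:ub_beta4} into the two hypotheses); what it sharpens instead is the containment: since $*\in\{3,5,6\}$, the unit $s_*$ is one of $ad+1$, $bd+1$, $cd+1$, hence $d<s_*<\exp(8B_1\log p\log q)$, and then
$$q^B\leq cd+1<d(ac+1)<\exp\bigl((8+2)B_1\log p\log q\bigr),$$
the same factor $10$. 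You go the other way: you sharpen the bound on $s_*$ itself to $6B_1\log p\log q$ (your case analysis on where the two maxima among $\{\alpha_1,\alpha_2,\alpha_4\}$ and $\{\beta_1,\beta_2,\beta_4\}$ are attained is valid, and it correctly relies on $\log s_4<4B_1\log p\log q$ coming from $s_4<s_1s_2$ rather than from adding the two exponent bounds of Lemma \ref{lem:ub_beta4}), and you can then afford the cruder containment $q^B\leq abcd+1$ with $abcd=(s_*-1)(s_j-1)$, worst case $j=4$, giving $(6+4)B_1$. So the two refinements are interchangeable — one needs either your sharper $s_*$ bound or the paper's sharper product $d(ac+1)$, not both — and your remark that the naive $8B_1$ estimate ``would spoil the numerics'' is true only inside your own decomposition. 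The closing numerics agree in both treatments ($B<249.2(\log(4.001B/\log p))^2\log p$ forces $B<34990\log p$), and your derivation of $A<10B_1\log q<34990\log q$ from $B_1<3499$ is a legitimate shortcut; the paper instead repeats the whole argument with the roles of $p$ and $q$ (and $B_1$, $A_1$) interchanged, starting from $A>27826\log q$.
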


\begin{proof}
 We start with proving the inequality for $B$. Therefore we may assume that $B>27826 \log p$ and we obtain
 the inequality
 $$d<s_*=p^{\alpha_*}q^{\beta_*}<\exp(8B_1 \log p \log q).$$
 This implies
 $$q^B\leq cd+1<d(ac+1)<\exp(10B_1 \log p \log q)$$
 and we obtain the inequality
 $$B<249.2 \log p \left(\log \left(\frac{4.001B}{\log p}\right)\right)^2.$$
 Let us write $x=\frac{4.001B}{\log p}$, then we obtain the inequality $x<249.2\cdot 4.001 (\log x)^2$
 which yields $x<139993$. Thus we obtain $B<34990 \log p$.
 
 The inequality for $A$ is obtained by exchanging the roles of $p$ and $q$.
\end{proof}

The next lemma shows that there cannot be a single large exponent out of $\beta_1,\dots,\beta_6$ or $\alpha_1,\dots,\alpha_6$ respectively.

\begin{lemma}\label{lem:Alt2_bound}
 Let $*<\dagger \in \{3,5,6\}$ and assume that at least one out of the two following inequalities holds
 \begin{itemize}
  \item $\beta_*,\beta_\dagger\leq \max\{\beta_1,\beta_2,\beta_4\}$ or
  \item $\alpha_*,\alpha_\dagger\leq \max\{\alpha_1,\alpha_2,\alpha_4\}$.
 \end{itemize}
 Then $B< 52038\log p$ and $A<52038 \log q$.
\end{lemma}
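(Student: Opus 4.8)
The plan is to reduce the statement to Lemma~\ref{lem:Alt1_bound}. As in the previous lemmas we may assume $B>27826\log p$ when bounding $B$ (otherwise $B<27826\log p<52038\log p$ is trivial) and, separately, $A>27826\log q$ when bounding $A$; I will treat the bound for $B$ in detail and obtain the bound for $A$ by running the same argument with the $A_1$-versions of Lemmas~\ref{lem:ub_beta12} and~\ref{lem:ub_beta4}. Moreover, by exchanging the roles of $p$ and $q$ (equivalently of the $\alpha$'s and $\beta$'s) it suffices to treat the first alternative, namely $\beta_*,\beta_\dagger\le\max\{\beta_1,\beta_2,\beta_4\}$.

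The starting observation is that $a,b,c$ are always small: Lemma~\ref{lem:ub_beta4} gives $s_4=bc+1=p^{\alpha_4}q^{\beta_4}<\exp(8B_1\log p\log q)=:M$, hence $a<b<c<M$. Thus only $d$, and with it the three $S$-units $s_3=ad+1$, $s_5=bd+1$, $s_6=cd+1$, can be large, and $B=\max\{\beta_3,\beta_5,\beta_6\}$. The key device is that $d$ can be eliminated between any two of $s_3,s_5,s_6$: one checks the identities $c\,s_5-b\,s_6=c-b$, $\;c\,s_3-a\,s_6=c-a$, $\;b\,s_3-a\,s_5=b-a$, whose right-hand sides are nonzero (as $a<b<c$) and bounded by $M$.

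Now choose the identity corresponding to the pair $\{*,\dagger\}$. Writing $s_*=p^{\alpha_*}q^{\beta_*}$ and $s_\dagger=p^{\alpha_\dagger}q^{\beta_\dagger}$, both terms on its left-hand side are divisible by $p^{\min\{\alpha_*,\alpha_\dagger\}}$, so this power of $p$ divides the bounded nonzero integer on the right. Hence $p^{\min\{\alpha_*,\alpha_\dagger\}}<M$, i.e.\ $\min\{\alpha_*,\alpha_\dagger\}<8B_1\log q$. Let $\star\in\{*,\dagger\}$ attain this minimum. Then $\alpha_\star<8B_1\log q$, while the hypothesis gives $\beta_\star\le\max\{\beta_1,\beta_2,\beta_4\}<4B_1\log p$, so that $s_\star<\exp(12B_1\log p\log q)$. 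Since $\star\in\{3,5,6\}$ we have $d<s_\star$, which places us exactly in the situation of Lemma~\ref{lem:Alt1_bound} (with a slightly larger bound on one exponent).

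Running the final estimate of Lemma~\ref{lem:Alt1_bound} with this input, namely $q^{B}\le cd+1<d(ac+1)<s_\star s_2<\exp(14B_1\log p\log q)$, yields $B<14B_1\log p\approx349\,(\log(4.001B/\log p))^2\log p$; solving the resulting transcendental inequality $x<1396(\log x)^2$ for $x=4.001B/\log p$ gives the claimed bound $B<52038\log p$, and $A<52038\log q$ follows symmetrically. The main obstacle is the middle step: recognising the $d$-eliminating relations $c\,s_5-b\,s_6=c-b$ and its companions, and exploiting that their bounded right-hand sides force $p^{\min\{\alpha_*,\alpha_\dagger\}}$ (resp.\ $q^{\min\{\beta_*,\beta_\dagger\}}$ in the symmetric case) to be small. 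This is precisely what upgrades the two-index hypothesis into the single-index hypothesis of Lemma~\ref{lem:Alt1_bound}; without it one is left with a three-term linear form in logarithms, which the two-logarithm estimate of Lemma~\ref{lem:twologs} cannot handle.
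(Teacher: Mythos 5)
Your proposal is correct, and it reaches the paper's conclusion by a genuinely different middle step. The paper shows that $d$ is small by combining three tools: the Stewart--Tijdeman-type quantity $T_*=\frac{s_*s_{7-*}}{s_\dagger s_{7-\dagger}}$, trapped between $2/3$ and $3/2$, which bounds the exponent differences $|\alpha_*-\alpha_\dagger|\log q$ and $|\beta_*-\beta_\dagger|\log p$ by roughly $12B_1\log p\log q$ (Claim~\ref{claim:ieq}); the non-divisibility $s_*\nmid s_\dagger$ from Lemma~\ref{lem:tripel}, which forces these two differences to have opposite signs; and Lemma~\ref{lem:abcd-div}, which gives $d\le s_\dagger/\gcd(s_\dagger,s_*)=\max\left\{p^{\alpha_\dagger-\alpha_*},q^{\beta_\dagger-\beta_*}\right\}\le\frac 32\exp(12B_1\log p\log q)$. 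You instead eliminate $d$ linearly through the identities $cs_5-bs_6=c-b$, $cs_3-as_6=c-a$, $bs_3-as_5=b-a$, observe that $p^{\min\{\alpha_*,\alpha_\dagger\}}$ divides the nonzero right-hand side, which is $<\exp(8B_1\log p\log q)$ by Lemma~\ref{lem:ub_beta4}, and conclude that the $S$-unit $s_\star$ attaining the minimum is itself small, whence $d<s_\star<\exp(12B_1\log p\log q)$. Both proofs then finish with the identical estimate $q^B\le cd+1<d(ac+1)<\frac 32\exp(14B_1\log p\log q)$ (your version even saves the factor $\frac 32$) and the same transcendental inequality, so the constant $52038$ is unchanged. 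What your route buys is economy and transparency: it bypasses Claim~\ref{claim:ieq}, Lemma~\ref{lem:tripel} and Lemma~\ref{lem:abcd-div} entirely, and it isolates the structural reason the two-index hypothesis suffices, namely that one of the two large $S$-units must itself be small. What the paper's route buys is uniformity, since it recycles the quantity from \eqref{eq:Stewart} and the divisibility lemmas that are used throughout the rest of the paper. Your dispatch of the four hypothesis/conclusion combinations by symmetry is also sound, because the divisibility step applies verbatim to powers of $q$ (giving $q^{\min\{\beta_*,\beta_\dagger\}}\le c-b$, and so on) and is insensitive to whether $A>27826\log q$ or $B>27826\log p$ is the running largeness assumption.
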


\begin{proof}
 We only prove the inequality for $B$ since the inequality for $A$ can be shown by the same way of reasoning. 
 In view of the content of the lemma we may assume that $B\geq 27826 \log p$ and due to Lemmas \ref{lem:ub_beta12} and \ref{lem:ub_beta4} we have that
 \begin{align*}
  \alpha_1,\alpha_2&<B_1 \log q,& \alpha_4&<4B_1 \log q ,\\
  \beta_1,\beta_2&<B_1 \log p,& \beta_4&<4B_1 \log p .
 \end{align*}
 However, let us start with the following claim.
 
 \begin{claim}\label{claim:ieq}
  If $\beta_*,\beta_\dagger\leq \max\{\beta_1,\beta_2,\beta_4\}$, then 
  $$|\alpha_*-\alpha_\dagger|\leq 12 B_1 \log q +\frac{\log (3/2)}{\log p}.$$
  If $\alpha_*,\alpha_\dagger\leq \max\{\alpha_1,\alpha_2,\alpha_4\}$, then
  $$|\beta_*-\beta_\dagger|\leq 12 B_1 \log q +\frac{\log (3/2)}{\log q}.$$
  \end{claim}

\begin{proof}[Proof of Claim \ref{claim:ieq}]
We consider the quantity $T_*=\frac{s_*s_{7-*}}{s_\dagger s_{7-\dagger}}$ and obtain an estimate similar to inequality~\eqref{eq:Stewart}
$$T_*=p^{\alpha_*-\alpha_\dagger+\alpha_{7-*}-\alpha_{7-\dagger}}q^{\beta_*-\beta_\dagger+\beta_{7-*}-\beta_{7-\dagger}}<\frac 32 $$
which yields
$$p^{|\alpha_*-\alpha_\dagger|}<\frac 32 q^{\beta*+\beta_{7-*}} p^{|\alpha_{7-*}-\alpha_{7-\dagger}|}<\frac 32 \exp(12 B_1 \log p \log q )$$
and therefore we obtain the first claim. The second claim is obtained by a similar argument.
\end{proof}

We continue the proof of Lemma \ref{lem:Alt2_bound}. Claim \ref{claim:ieq} shows that in any case we have the inequality
$$\max\{|\alpha_*-\alpha_\dagger|\log q, |\beta_*-\beta_\dagger|\log p\}<12 B_1 \log p \log q+\log(3/2).$$

Next we observe that due to Lemma \ref{lem:tripel} we have $s_*\nmid s_\dagger$, that is either 
\begin{itemize}
 \item $\alpha_*<\alpha_\dagger$ and $\beta_\dagger<\beta_*$, or
 \item $\beta_*<\beta_\dagger$ and $\alpha_\dagger<\alpha_*$.
\end{itemize}
Now we apply Lemma \ref{lem:abcd-div} and obtain that $d|\frac{s_\dagger-s_*}{\gcd (s_\dagger,s_*)}$. Hence, we have
$$d\leq \frac{s_\dagger}{\gcd (s_\dagger,s_*)} =\max\left\{p^{\alpha_\dagger-\alpha_*},q^{\beta_\dagger-\beta_*}\right\}\leq \frac 32 \exp(12 B_1 \log p \log q).$$
But, we also have
$$q^B\leq cd+1<d(ac+1)< \frac 32 \exp(14 B_1 \log p \log q)$$
which yields the inequality
$$B< 348.89 \log p \left(\log \frac{4.001 B}{\log p}\right)^2.$$
If we substitute $x=\frac{4.001 B}{\log p}$, we obtain the inequality $x<4.001 \cdot 348.89 (\log x)^2$ which implies that $x< 209283$, hence $B<52038 \log p$.
\end{proof}

Lemmas \ref{lem:Alt1_bound} and \ref{lem:Alt2_bound} result in many restrictions on the exponents $\alpha_1,\dots,\alpha_6$ and $\beta_1,\dots, \beta_6$ and in combination with Lemmas \ref{lem:tripel} and \ref{lem:min_al}
it is possible to prove the main result of this section:

\begin{proposition}\label{prop:bound_AB}
 Let $p<q$ be primes and assume that there exists a $\{p,q\}$-Dioph\-antine quadruple. Then either
 $$\max\{A\log p,B\log q\} < 52038 \log p \log q$$
 or one of the following two cases holds:
 \begin{description}
  \item[Case I\phantom{I}] $\alpha_1=\alpha_2=\alpha_3<\alpha_4<\alpha_6<\alpha_5$ and $\beta_1=\beta_4=\beta_5<\beta_2<\beta_6<\beta_3$, or
  \item[Case II] $\alpha_1=\alpha_4=\alpha_5<\alpha_2<\alpha_6<\alpha_3$ and $\beta_1=\beta_2=\beta_3<\beta_4<\beta_6<\beta_5$.
 \end{description}
\end{proposition}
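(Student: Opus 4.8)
The plan is to argue by contraposition against Lemmas~\ref{lem:Alt1_bound} and~\ref{lem:Alt2_bound}. Set $M_\alpha=\max\{\alpha_1,\alpha_2,\alpha_4\}$ and $M_\beta=\max\{\beta_1,\beta_2,\beta_4\}$, and call an index $*\in\{3,5,6\}$ \emph{$\alpha$-large} if $\alpha_*>M_\alpha$ and \emph{$\beta$-large} if $\beta_*>M_\beta$. Suppose the asserted bound $\max\{A\log p,B\log q\}<52038\log p\log q$ fails; this is exactly $A\ge 52038\log q$ or $B\ge 52038\log p$, so the conclusions of Lemmas~\ref{lem:Alt1_bound} and~\ref{lem:Alt2_bound} must fail for every admissible index choice. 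From Lemma~\ref{lem:Alt1_bound} this forces each $*\in\{3,5,6\}$ to be $\alpha$-large or $\beta$-large; from Lemma~\ref{lem:Alt2_bound} no pair $*<\dagger$ in $\{3,5,6\}$ may have both members non-$\alpha$-large, nor both non-$\beta$-large. Consequently at least two of $\alpha_3,\alpha_5,\alpha_6$ exceed $M_\alpha$, at least two of $\beta_3,\beta_5,\beta_6$ exceed $M_\beta$, and the $\alpha$-large and $\beta$-large index sets together cover $\{3,5,6\}$.

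Next I would pin these sets down using Lemma~\ref{lem:min_al} together with the exclusions $\alpha_2\neq\alpha_4,\ \alpha_3\neq\alpha_5,\ \alpha_3\neq\alpha_6,\ \alpha_5\neq\alpha_6$ (and their $\beta$-analogues) coming from Lemma~\ref{lem:tripel}. If all three of $\alpha_3,\alpha_5,\alpha_6$ were $\alpha$-large, then in the quadruples $(\alpha_1,\alpha_2,\alpha_5,\alpha_6)$ and $(\alpha_1,\alpha_3,\alpha_4,\alpha_6)$ the large exponents are the strict top two, so Lemma~\ref{lem:min_al} (two smallest coincide) gives $\alpha_1=\alpha_2$ and $\alpha_1=\alpha_4$, contradicting $\alpha_2\neq\alpha_4$; hence exactly two are $\alpha$-large, and likewise exactly two are $\beta$-large. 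If $\alpha_6$ were not $\alpha$-large, then $3$ and $5$ would be, and Lemma~\ref{lem:min_al} on $(\alpha_2,\alpha_3,\alpha_4,\alpha_5)$ would force $\alpha_2=\alpha_4$, impossible; so $6$ is both $\alpha$-large and $\beta$-large. Since the two sets cover $\{3,5,6\}$ and each omits a single index of $\{3,5\}$, only two configurations survive: $\alpha$-large $=\{5,6\}$, $\beta$-large $=\{3,6\}$ (to become Case~I), and the mirror image $\alpha$-large $=\{3,6\}$, $\beta$-large $=\{5,6\}$ (Case~II).

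Finally, in the first configuration I would extract the exact orderings. Lemma~\ref{lem:min_al} on $(\alpha_1,\alpha_2,\alpha_5,\alpha_6)$ gives $\alpha_1=\alpha_2$, and on $(\beta_1,\beta_3,\beta_4,\beta_6)$ gives $\beta_1=\beta_4$; reading the signs of the exponent differences in the anti-divisibilities $s_5\nmid s_6$ and $s_3\nmid s_6$ of Lemma~\ref{lem:tripel} forces $\alpha_5>\alpha_6$ and $\beta_3>\beta_6$. Lemma~\ref{lem:min_al} on $(\alpha_1,\alpha_3,\alpha_4,\alpha_6)$ and on $(\beta_2,\beta_3,\beta_4,\beta_5)$ then makes the two smallest of $\{\alpha_1,\alpha_3,\alpha_4\}$ and of $\{\beta_2,\beta_4,\beta_5\}$ coincide, leaving a two-fold ambiguity in each. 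I would resolve this with the elementary size inequalities $s_1<s_2<s_4$ valid because $a<b<c<d$: since $\alpha_1=\alpha_2$, the relation $s_1<s_2$ gives $\beta_1<\beta_2$, hence $\beta_4<\beta_2$, excluding the branch $\beta_2=\beta_5<\beta_4$ and leaving $\beta_1=\beta_4=\beta_5<\beta_2$; symmetrically $\beta_1=\beta_4$ and $s_1<s_4$ give $\alpha_1<\alpha_4$, forcing $\alpha_1=\alpha_2=\alpha_3<\alpha_4$. With $\alpha_5>\alpha_6>M_\alpha=\alpha_4$ and $\beta_3>\beta_6>M_\beta=\beta_2$ this is precisely Case~I, and the second configuration yields Case~II after interchanging $p\leftrightarrow q$ and $\alpha\leftrightarrow\beta$.

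I expect the combinatorial bookkeeping of the second paragraph to be the main obstacle: each invocation of Lemma~\ref{lem:min_al} is legitimate only once one knows which exponent of the relevant quadruple is the strict maximum, so the large/small dichotomy and the exclusions from Lemma~\ref{lem:tripel} must be tracked at every branching. Once the two configurations are isolated, the size inequalities close the argument with essentially no further input.
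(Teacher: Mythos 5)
Your proposal is correct, and it rests on exactly the same toolkit as the paper's own proof: the contrapositive set-up, Lemma~\ref{lem:min_al} applied to the three quadruples, the exclusions $\alpha_2\neq\alpha_4$, $\beta_2\neq\beta_4$, $\alpha_5\neq\alpha_6$, etc.\ from Lemma~\ref{lem:tripel}, Lemmas~\ref{lem:Alt1_bound} and~\ref{lem:Alt2_bound}, and elementary size and divisibility relations. What differs is the organization of the combinatorics, and your version is genuinely tidier. The paper enumerates the four admissible orderings of $(\beta_2,\beta_3,\beta_4,\beta_5)$, crosses each with the forced $\alpha$-ordering, and then runs a second layer of case distinctions on which pair of $(\beta_1,\beta_2,\beta_5,\beta_6)$ is minimal, killing branches one at a time (via $s_4<s_1$, $s_2\mid s_4$, and similar contradictions). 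You instead pin down the coarse structure first --- the $\alpha$-large and $\beta$-large index sets must each equal $\{3,6\}$ or $\{5,6\}$, must contain $6$, and must jointly cover $\{3,5,6\}$ --- which collapses the entire case tree to two configurations in a few lines; the refinement to the exact orderings then uses only tools that are symmetric in $p$ and $q$ (Lemma~\ref{lem:min_al}, Lemma~\ref{lem:tripel}, and $s_1<s_2<s_4$, which needs only $a<b<c<d$), so your appeal to the $\alpha\leftrightarrow\beta$ interchange to obtain Case~II from Case~I is legitimate and avoids repeating the derivation. The one point worth making explicit in a write-up is precisely that last observation: interchanging the roles of $p$ and $q$ does not clash with the standing assumption $p<q$, because the asymmetric lemmas (\ref{lem:Alt1_bound}, \ref{lem:Alt2_bound}) are only used to isolate the two configurations, while nothing invoked in the final refinement step depends on $p<q$.
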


\begin{proof}
In view of the content of the proposition, we assume that 
$$\max\{A\log p,B\log q\} \geq 52038 \log p \log q$$
and we will show that either Case I or II holds.

We start by applying Lemma \ref{lem:min_al} and deduce that the two smallest exponents of the quadruple $(\beta_2,\beta_3,\beta_4,\beta_5)$ coincide. Since by Lemma \ref{lem:tripel} we may exclude
the case that $\beta_2=\beta_4$ and $\beta_3=\beta_5$ and in combination with Lemma \ref{lem:Alt2_bound} and the fact that $A$ or $B$ is large we are left
with the following four cases:
\begin{enumerate}
 \item $\beta_2=\beta_3<\beta_4<\beta_5$,
 \item $\beta_2=\beta_5<\beta_4<\beta_3$,
 \item $\beta_3=\beta_4<\beta_2<\beta_5$,
 \item $\beta_4=\beta_5<\beta_2<\beta_3$.
\end{enumerate}
Considering $\alpha$ instead of $\beta$ we obtain the same list of cases with $\beta$ replaced by $\alpha$. Let us have a closer look on each individual case.

We start with Case (1). Since Lemma \ref{lem:Alt1_bound} we deduce that $\alpha_3$ is not minimal. Moreover $\alpha_2<\alpha_4$ is not possible, since otherwise $s_2|s_4$ which
contradicts Lemma \ref{lem:tripel}. Therefore the only possibility left for the $\alpha$-exponents is $\alpha_4=\alpha_5<\alpha_2<\alpha_3$.

In Case (2) we conclude by Lemma \ref{lem:Alt1_bound} that $\alpha_5$ cannot be minimal and by Lemma \ref{lem:tripel} we have $\alpha_4<\alpha_2$. Thus $\alpha_4=\alpha_3<\alpha_2<\alpha_5$. Note that 
$\alpha_5\leq \alpha_2$ can be excluded due to Lemma \ref{lem:Alt2_bound}.

In Case (3) we have that $\alpha_3$ is not minimal (Lemma \ref{lem:Alt1_bound}), $\alpha_2<\alpha_4$ (Lemma \ref{lem:tripel}) and $\alpha_4<\alpha_3$ (Lemma \ref{lem:Alt2_bound}). Thus we obtain
$\alpha_2=\alpha_5<\alpha_4<\alpha_3$.

Finally in Case (4) we have that $\alpha_5$ is not minimal (Lemma \ref{lem:Alt1_bound}), $\alpha_2<\alpha_4$ (Lemma \ref{lem:tripel}) and $\alpha_4<\alpha_5$ (Lemma \ref{lem:Alt2_bound}). Thus we obtain
$\alpha_2=\alpha_3<\alpha_4<\alpha_5$. 

Therefore we have to distinguish between the following four cases:
\begin{enumerate}
 \item $\beta_2=\beta_3<\beta_4<\beta_5$ and $\alpha_4=\alpha_5<\alpha_2<\alpha_3$,
 \item $\beta_2=\beta_5<\beta_4<\beta_3$ and $\alpha_4=\alpha_3<\alpha_2<\alpha_5$,
 \item $\beta_3=\beta_4<\beta_2<\beta_5$ and $\alpha_2=\alpha_5<\alpha_4<\alpha_3$,
 \item $\beta_4=\beta_5<\beta_2<\beta_3$ and $\alpha_2=\alpha_3<\alpha_4<\alpha_5$. 
\end{enumerate}

Next, we apply again Lemma \ref{lem:min_al} and deduce that the two smallest exponents of the quadruple $(\beta_1,\beta_2,\beta_5,\beta_6)$ coincide. Note
that $\beta_6$ cannot be minimal because of Lemma \ref{lem:Alt2_bound}.
Otherwise two out of $\beta_3,\beta_5,\beta_6$ would be small in any of the above discussed cases. Therefore either $\beta_2=\beta_5$ or $\beta_1=\beta_2$ or $\beta_1=\beta_5$ is minimal.

Let us start with discussing the case that $\beta_2=\beta_5$ is minimal. The only case that admits $\beta_2=\beta_5$ is Case~(2). We want to apply Lemma \ref{lem:min_al} to the quadruple
$(\alpha_1,\alpha_2,\alpha_5,\alpha_6)$.
Since already $\alpha_3$ is small neither $\alpha_5$ nor $\alpha_6$ can be minimal, because otherwise
this would contradict Lemma \ref{lem:Alt2_bound}. Therefore we conclude that $\alpha_1=\alpha_2<\alpha_5,\alpha_6$ and in combination with Case (2) we obtain
$$ \beta_2=\beta_5<\beta_4<\beta_3 \quad \text{and} \quad \alpha_4=\alpha_3<\alpha_1=\alpha_2<\alpha_5,\alpha_6.$$
Once again we apply Lemma \ref{lem:min_al} and conclude that the two smallest exponents of the quadruple $(\beta_1,\beta_3,\beta_4,\beta_6)$ must coincide.
Since the minimality of $\beta_3$ or $\beta_6$ would yield that two exponents out
of $\beta_3,\beta_5,\beta_6$ would be small, we conclude that $\beta_1=\beta_4$. On the other hand we have that $\alpha_4<\alpha_1$ and therefore we get $s_4<s_1$ an obvious contradiction and the case that
$\beta_2=\beta_5$ is minimal cannot occur.

Next we consider the case that $\beta_1=\beta_2$ is minimal. Since the minimality of $\beta_2=\beta_5$ has been excluded in the previous paragraph, 
we deduce that $\beta_2<\beta_5$ and that either Case (1) or Case (3) holds.
Let us assume for the moment that Case (1) holds. Since $\beta_3$ is small we deduce that $\beta_6$ cannot be small because of Lemma \ref{lem:Alt2_bound} and we deduce that
$$ \beta_1=\beta_2=\beta_3<\beta_4<\beta_5,\beta_6 \quad \text{and} \quad \alpha_4=\alpha_5<\alpha_2<\alpha_3.$$
However Lemma \ref{lem:min_al} tells us that the two smallest exponents out of the quadruple $(\alpha_1,\alpha_2,\alpha_5,\alpha_6)$ must coincide.
Again Lemma \ref{lem:Alt2_bound} shows that $\alpha_6$ cannot be minimal. Therefore
we have either $\alpha_1=\alpha_2$ or $\alpha_1=\alpha_5$ or $\alpha_2=\alpha_5$ is minimal. But $\alpha_1=\alpha_2$ is impossible, since otherwise $s_1=s_2$, an obvious contradiction. Also $\alpha_2=\alpha_5$ is impossible, since
this would imply $\alpha_2=\alpha_4=\alpha_5$ and $s_2|s_4$ which contradicts Lemma~\ref{lem:tripel}. Therefore we have $\alpha_1=\alpha_5$ and we conclude that
$$ \beta_1=\beta_2=\beta_3<\beta_4<\beta_5,\beta_6 \quad \text{and} \quad \alpha_1=\alpha_4=\alpha_5<\alpha_2<\alpha_3,\alpha_6.$$
Since $s_3\nmid s_6$ and $s_5\nmid s_6$ we finally arrive at Case II of the proposition.

Now let us assume that $\beta_1=\beta_2$ is minimal and that Case (3) holds. Similar as in the paragraph above we deduce that
$$ \beta_1=\beta_2<\beta_3=\beta_4<\beta_5,\beta_6 \quad \text{and} \quad \alpha_2=\alpha_5<\alpha_4<\alpha_3.$$
But this immediately implies $s_2|s_4$, which contradicts Lemma \ref{lem:tripel}.
 
Therefore we are left with the case that $\beta_1=\beta_5$ is minimal. Since due to the previous cases we may exclude that $\beta_2=\beta_5$. Therefore only Case (4) may hold and we obtain
$$ \beta_1=\beta_4=\beta_5<\beta_2<\beta_3,\beta_6 \quad \text{and} \quad \alpha_2=\alpha_3<\alpha_4<\alpha_5.$$
Once again we utilize Lemma \ref{lem:min_al} and use the fact that the two smallest exponents of the quadruple $(\alpha_1,\alpha_2,\alpha_5,\alpha_6)$ must coincide. Since already $\alpha_3$ is small
neither $\alpha_5$ nor $\alpha_6$ can be minimal and we obtain that $\alpha_1=\alpha_2$ is minimal. Moreover, $\alpha_6<\alpha_4$ would contradict Lemma \ref{lem:Alt2_bound}. Putting everything together we obtain 
$$ \beta_1=\beta_4=\beta_5<\beta_2<\beta_3,\beta_6 \quad \text{and} \quad \alpha_1=\alpha_2=\alpha_3<\alpha_4<\alpha_5,\alpha_6.$$
By noting that $s_5\nmid s_6$ and $s_3\nmid s_6$ we obtain Case I of the proposition.
\end{proof}

In what follows the following consequence of Proposition \ref{prop:bound_AB} will be useful:

\begin{corollary}\label{cor:bound_AB}
 Let $p<q$ be primes not both $\equiv 1 \mod 4$ and assume that there exists a $\{p,q\}$-Diophantine quadruple. Then we have 
 $$\max\{A\log p,B\log q\} \leq 52038 \log p \log q.$$
\end{corollary}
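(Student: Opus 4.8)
The plan is to derive the Corollary from Proposition \ref{prop:bound_AB} by excluding its two exceptional cases under the additional congruence hypothesis. By Proposition \ref{prop:bound_AB}, either the desired bound holds already (in fact with strict inequality, hence a fortiori with $\leq$), or one of Case I or Case II occurs. It therefore suffices to show that the hypothesis that $p$ and $q$ are not both $\equiv 1 \mod 4$ rules out both exceptional cases.

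First I would unwind the hypothesis: not both $\equiv 1 \mod 4$ means that at least one of the two primes equals $2$ or is $\equiv 3 \mod 4$. If $p=2$ or $p \equiv 3 \mod 4$, then Lemma \ref{lem:quad_res} applies verbatim and forces one of the equalities $\alpha_1=\alpha_6$, $\alpha_2=\alpha_5$, $\alpha_3=\alpha_4$ to hold. If instead it is $q$ that is $\equiv 3 \mod 4$ (note that $q>p\geq 2$ is necessarily odd here), I would apply Lemma \ref{lem:quad_res} after interchanging the roles of $p$ and $q$. This is legitimate because the lemma is stated without assuming $p<q$, and the whole system \eqref{eq:SUnit}, together with the ordering $a<b<c<d$, is symmetric under swapping the two primes, which merely interchanges the exponents $\alpha_i$ and $\beta_i$. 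In this case the conclusion instead yields one of $\beta_1=\beta_6$, $\beta_2=\beta_5$, $\beta_3=\beta_4$.

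It then remains to confront these equalities with the exponent patterns of Cases I and II. In both cases the $\alpha$-exponents form a strictly increasing chain in which the three smallest coincide and the remaining three are pairwise distinct and strictly larger, and the same is true of the $\beta$-exponents. Inspecting the four chains, one checks in each instance that every candidate pair $\{\alpha_1,\alpha_6\}$, $\{\alpha_2,\alpha_5\}$, $\{\alpha_3,\alpha_4\}$ (and likewise $\{\beta_1,\beta_6\}$, $\{\beta_2,\beta_5\}$, $\{\beta_3,\beta_4\}$) contains an index lying in the ``large'' group, so its two exponents are strictly different; these are one-line comparisons requiring no computation. Consequently, whichever prime satisfies the congruence condition, the corresponding ($\alpha$- or $\beta$-) form of Lemma \ref{lem:quad_res} contradicts both Case I and Case II, so neither exceptional case can occur and the strict bound of Proposition \ref{prop:bound_AB} holds, which is stronger than the claimed inequality.

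The argument involves essentially no computation, and the only point I would treat with care is the justification for applying Lemma \ref{lem:quad_res} with $p$ and $q$ interchanged when $q \equiv 3 \mod 4$; I would make this explicit by appealing to the symmetry of the system \eqref{eq:SUnit} in the two primes and to the fact that the lemma does not assume $p<q$.
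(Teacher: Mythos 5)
Your proposal is correct and takes essentially the same route as the paper: invoke Proposition \ref{prop:bound_AB}, apply Lemma \ref{lem:quad_res} to whichever prime is $2$ or $\equiv 3 \mod 4$ (legitimate, as the lemma does not assume $p<q$), and observe that the resulting equality $\alpha_1=\alpha_6$, $\alpha_2=\alpha_5$, $\alpha_3=\alpha_4$ (or its $\beta$-analogue) is incompatible with the exponent chains of Cases I and II. The only cosmetic difference is that the paper derives this incompatibility by combining the equalities inside the chains with the forbidden equalities of Lemma \ref{lem:tripel}, whereas you read it off directly from the strict inequalities in the chains; both checks are immediate and valid.
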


\begin{proof}
 Let us assume that $p\not \equiv 1 \mod 4$. Then due to Lemma \ref{lem:quad_res} we have that either $\alpha_1=\alpha_6$ or $\alpha_2=\alpha_5$ or $\alpha_3=\alpha_4$. But, if also
 Case I or II of Proposition \ref{prop:bound_AB} holds, then this yields a contradiction to the content of Lemma \ref{lem:tripel}. Thus we obtain the claimed upper bound.
 
 In the case that $q\not \equiv 1 \mod 4$ a similar argument can be applied to obtain the corollary.
\end{proof}

In the case that $p=2$ or $p=3$ we immediately obtain the following absolute upper bounds for~$B$:

\begin{corollary}\label{cor:bound_23}
 Let $q\neq 2,3$ be a prime and assume that there exists a $\{2,q\}$-Diophantine quadruple or a $\{3,q\}$-Diophantine quadruple. 
 Then $B \leq 36070$ and $B\leq 57170$ respectively.
\end{corollary}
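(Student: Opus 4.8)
The plan is to derive this directly from Corollary~\ref{cor:bound_AB}, which already contains all the substantive work; the statement at hand is essentially a numerical specialization of that result.

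First I would check that the hypotheses of Corollary~\ref{cor:bound_AB} are met in both cases. Since $q\neq 2,3$ forces $q\geq 5$, we have $p<q$ whenever $p\in\{2,3\}$. Moreover $2\equiv 2\pmod 4$ and $3\equiv 3\pmod 4$, so in either case $p\not\equiv 1\pmod 4$; in particular $p$ and $q$ are not both $\equiv 1\pmod 4$, which is exactly the residue hypothesis required by the corollary.

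Next I would invoke Corollary~\ref{cor:bound_AB} to obtain
$$\max\{A\log p,B\log q\}\leq 52038\log p\log q,$$
and in particular $B\log q\leq 52038\log p\log q$. Dividing through by $\log q>0$ yields the clean, $q$-free bound $B\leq 52038\log p$.

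Finally I would substitute the two admissible values of $p$ and round in the safe direction. For $p=2$ one checks $52038\log 2<36070$, and for $p=3$ one checks $52038\log 3<57170$, which gives the two claimed inequalities. There is no genuine obstacle here: the only points requiring a moment's care are the direction of the rounding in the last numerical step and the remark that $p\not\equiv 1\pmod 4$ is precisely what permits the use of the sharp Corollary~\ref{cor:bound_AB} instead of the weaker dichotomy of Proposition~\ref{prop:bound_AB}.
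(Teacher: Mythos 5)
Your proof is correct and is precisely the argument the paper intends: the corollary is presented as an immediate consequence of Corollary~\ref{cor:bound_AB} (whose hypothesis holds since $p\in\{2,3\}$ is not $\equiv 1\bmod 4$ and $q\geq 5>p$), with $B\log q\leq 52038\log p\log q$ giving $B\leq 52038\log p$. Your numerical evaluations are also accurate, since $52038\log 2\approx 36069.99<36070$ and $52038\log 3\approx 57169.59<57170$.
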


\begin{remark}
 We want to stress out that the key to obtain rather small upper bounds for $A$ and $B$ is that we can show that Cases I and II cannot hold unless $p\equiv q \equiv 1 \mod 4$.
 The author does not see how to obtain such small bounds in the case that $p\equiv q \equiv 1 \mod 4$. However, let us mention that due to the method of Stewart and Tijdeman \cite{Stewart:1997}
 one can show that 
  $$\max\{A\log p,B\log q\} \ll (\log p \log q)^3 \left(\log\log p+\log\log q\right)^4.$$
  This upper bound can be obtained by applying \cite[Lemma 7]{Szalay:2013a} (see also \cite[Section 4]{Szalay:2015}) together with a result due to Peth\H{o} and de Weger \cite{Pethoe:1986} on the upper bound for solutions to
 $x=u+v(\log x)^h$. 
\end{remark}

\section{The presence of a prime $\not\equiv 1 \mod 4$}\label{Sec:Prime}

In this section we want to derive some consequences from the fact that $p\equiv 3 \mod 4$. However to keep our results as general as possible we drop the assumption that
$p<q$ in this section, but keep the assumption that $a<b<c<d$. However, we prove the following proposition, which is also the main result of this section:

\begin{proposition}\label{prop:p=3mod4}
 Let $p,q$ be odd, distinct primes (not necessarily $p<q$), with $p\equiv 3 \mod 4$. If there exists a $\{p,q\}$-Diophantine quadruple, then one of the four cases in Table \ref{tab:p=3mod4} holds.
\end{proposition}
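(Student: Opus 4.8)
The plan is to run a finite but intricate case analysis driven by Lemmas~\ref{lem:min_al}, \ref{lem:tripel} and \ref{lem:quad_res}, pruning the resulting tree with Lemma~\ref{lem:alpha_beta_zero}. First I record the two facts I shall use repeatedly. Since $s_i=p^{\alpha_i}q^{\beta_i}$, we have $s_i\mid s_j$ if and only if $\alpha_i\leq\alpha_j$ and $\beta_i\leq\beta_j$; hence the relations $s_2\nmid s_4$, $s_3\nmid s_5$, $s_3\nmid s_6$ and $s_5\nmid s_6$ furnished by Lemma~\ref{lem:tripel} each forbid one exponent vector from being componentwise dominated by another. By Lemma~\ref{lem:min_al}, in each of the three quadruples $(\alpha_2,\alpha_3,\alpha_4,\alpha_5)$, $(\alpha_1,\alpha_2,\alpha_5,\alpha_6)$, $(\alpha_1,\alpha_3,\alpha_4,\alpha_6)$ the two smallest entries coincide, and likewise for the $\beta$'s.

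Since $p\equiv 3\mod 4$, Lemma~\ref{lem:quad_res} forces exactly one of the three matchings $\{\alpha_1,\alpha_6\}$, $\{\alpha_2,\alpha_5\}$, $\{\alpha_3,\alpha_4\}$ to vanish. These matchings are precisely the index-complements of the three quadruples above, so in each case the two coincidence conditions attached to the vanishing pair hold automatically (the two zero exponents are the global minima), and I only have to exploit the coincidence condition of the remaining quadruple --- for instance $(\alpha_2,\alpha_3,\alpha_4,\alpha_5)$ when $\alpha_1=\alpha_6=0$. Discarding via the non-divisibility relations those orderings that would make some $s_i\mid s_j$, I read off the admissible orderings of the four non-vanishing $\alpha$-exponents, exactly as in the proof of Proposition~\ref{prop:bound_AB}. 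For the $\beta$-exponents none of the matchings need vanish, so there all three coincidence conditions of Lemma~\ref{lem:min_al} remain active; combined again with $s_2\nmid s_4,\dots,s_5\nmid s_6$ this produces the candidate list of $\beta$-orderings.

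Each surviving branch now carries an $\alpha$-pattern together with a $\beta$-pattern, and I would cut the tree in two ways. Lemma~\ref{lem:alpha_beta_zero} kills every branch in which some $\beta$-matching also vanishes, since an $\alpha$-matching already does; and a second round of Lemma~\ref{lem:tripel}, now reading the chosen $\alpha$-ordering and $\beta$-ordering together, removes every branch that would force a forbidden $s_i\mid s_j$ or the degenerate $s_i=s_j$. I expect exactly four branches to survive across the three starting cases, and these are the four rows of Table~\ref{tab:p=3mod4}.

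The hard part will be the bookkeeping rather than any single estimate. The ordering $a<b<c<d$ breaks the symmetry among the three matchings, so the three cases of Lemma~\ref{lem:quad_res} must each be followed through separately, and every one branches several times under the successive uses of Lemma~\ref{lem:min_al}. The genuinely delicate point is that Lemma~\ref{lem:tripel} constrains the pair $(\alpha_i,\beta_i)$ jointly rather than $\alpha$ and $\beta$ separately, so the $\alpha$- and $\beta$-analyses cannot be fully decoupled; the care needed is to guarantee that every branch terminates either in one of the four tabulated patterns or in an explicit contradiction ($s_i=s_j$, $s_i\mid s_j$, or a simultaneous vanishing excluded by Lemma~\ref{lem:alpha_beta_zero}), with no case left unchecked.
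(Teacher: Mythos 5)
Your overall skeleton matches the paper's: split into the three cases of Lemma~\ref{lem:quad_res}, then chase orderings via Lemma~\ref{lem:min_al}, Lemma~\ref{lem:tripel}, size restrictions from $a<b<c<d$, and Lemma~\ref{lem:alpha_beta_zero}. But there is a genuine gap: this purely combinatorial sieve does \emph{not} leave only the four rows of Table~\ref{tab:p=3mod4}; several additional exponent patterns survive every one of the tools you list, and eliminating them requires machinery your proposal never invokes. A concrete example is the configuration
\begin{equation*}
0=\alpha_1=\alpha_6<\alpha_3=\alpha_4<\alpha_2<\alpha_5
\qquad\text{and}\qquad
\beta_2=\beta_5<\beta_1=\beta_3\leq \beta_4<\beta_6 ,
\end{equation*}
which satisfies all three coincidence conditions of Lemma~\ref{lem:min_al} (for both $\alpha$'s and $\beta$'s), violates none of the non-divisibility relations $s_2\nmid s_4$, $s_3\nmid s_5$, $s_3\nmid s_6$, $s_5\nmid s_6$, is compatible with the size ordering of the $s_i$, and survives Lemma~\ref{lem:alpha_beta_zero} as long as $\beta_2=\beta_5>0$. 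The paper kills this branch by an arithmetic-analytic argument: it expands $a^2=(s_1-1)(s_2-1)/(s_4-1)$ with the $L$-notation (Lemma~\ref{lem:L-geometric}, legitimized by $\max\{p,q\}>10^5$ from Lemma~\ref{lem:max_pq}), concludes that $p^{\alpha_2-\alpha}/q^{\beta_4-\beta-\beta'}$ must be an integer, and then plays this off against the gcd inequality of Lemma~\ref{lem:s4_low_bound} and the size relation $s_2<s_3$ to reach a contradiction. No rearrangement of Lemmas~\ref{lem:min_al}, \ref{lem:tripel}, \ref{lem:quad_res}, \ref{lem:alpha_beta_zero} alone accomplishes this.

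The same issue recurs in several other branches. The pattern $\beta_1=\beta_4=\beta_5<\beta_2,\beta_3<\beta_6$ with $0=\alpha_1=\alpha_6<\alpha_2=\alpha_3<\alpha_4<\alpha_5$ (Case~B, second option, in the paper) is removed only via Lemma~\ref{lem:abcd-div}, which gives $d\mid (s_5-s_3)/\gcd(s_5,s_3)$ and hence $d<p^{\alpha_5-\alpha}$, contradicting $b<s_1$; and the second and third options in the case $\alpha_2=\alpha_5=0$ are removed only via the gcd bound $\gcd(s_4,s_1)\gcd(s_4,s_2)<s_4$ of Lemma~\ref{lem:s4_low_bound}. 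Since your proposal cites neither Lemma~\ref{lem:abcd-div} nor Lemma~\ref{lem:s4_low_bound} nor any analytic estimate, the claim that ``exactly four branches survive'' your pruning is false as stated, and the proof cannot terminate: the surviving spurious branches are genuine exponent patterns that only the heavier arithmetic input rules out. To repair the argument you would need to add precisely these three ingredients (the gcd inequalities, the divisibility relations for $a,b,c,d$, and the $L$-notation expansion of $a^2$), which is in effect the rest of the paper's proof.
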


\begin{table}[ht]
\caption{Restrictions to the exponents}\label{tab:p=3mod4}
 \begin{tabular}{|c|l|l|}
 \hline Case & The $\alpha$ exponents & The $\beta$ exponents \\ \hline\hline
 I & $0=\alpha_1=\alpha_6<\alpha_4=\alpha_5<\alpha_2<\alpha_3$ & $\beta_1=\beta_2=\beta_3<\beta_4<\beta_5<\beta_6$ \\\hline
 II & $0=\alpha_1=\alpha_6<\alpha_2=\alpha_5<\alpha_3,\alpha_4$ & $\beta_3=\beta_4<\beta_1=\beta_2< \beta_5<\beta_6$ \\\hline
 III & $0=\alpha_2=\alpha_5<\alpha_1=\alpha_3\leq \alpha_4<\alpha_6$ & $\beta_1=\beta_6<\beta_3=\beta_4<\beta_2<\beta_5$ \\\hline
 IV & $0=\alpha_3=\alpha_4<\alpha_1=\alpha_2<\alpha_5<\alpha_6$ & $\beta_1=\beta_6<\beta_2=\beta_5< \beta_3,\beta_4$\\ \hline
 \end{tabular}
\end{table}

Since $p\equiv 3 \mod 4$ we may apply Lemma \ref{lem:quad_res} and we have to distinguish between the three cases $\alpha_1=\alpha_6=0$, $\alpha_2=\alpha_5=0$ and $\alpha_3=\alpha_4=0$.
We prove Proposition \ref{prop:p=3mod4} in each of these three individual cases.

\subsection{The case that $\alpha_1=\alpha_6=0$}

If $\alpha_1=\alpha_6=0$, then we deduce that $\beta_1<\beta_6$ and in view of Lemma~\ref{lem:min_al} applied to the quadruple $(\beta_1,\beta_2,\beta_5,\beta_6)$ we 
have to distinguish between the following three subcases:
\begin{description}
 \item[Case A] $\beta_1=\beta_2\leq \beta_5<\beta_6$;
 \item[Case B] $\beta_1=\beta_5\leq \beta_2<\beta_6$;
 \item[Case C] $\beta_2=\beta_5\leq \beta_1<\beta_6$.
\end{description}
Note that since $\alpha_6=0$ the exponent $\beta_6$ is the largest exponent among $\beta_1,\dots,\beta_6$.

\subsubsection{Case A} 
In this case we apply Lemma \ref{lem:min_al} to the quadruple $(\beta_1,\beta_3,\beta_4,\beta_6)$ and since $\beta_1<\beta_6$ we have that one of the following three options holds:
\begin{itemize}
 \item $\beta_1=\beta_3\leq \beta_4<\beta_6$;
 \item $\beta_1=\beta_4\leq \beta_3<\beta_6$;
 \item $\beta_3=\beta_4\leq \beta_1<\beta_6$.
\end{itemize}
The second option can be dismissed since otherwise we would have $\beta_1=\beta_2=\beta_4$ which implies $s_2|s_4$, a contradiction to Lemma \ref{lem:tripel}. Moreover,
we may assume for the other options that $\beta_1<\beta_4$.

Let us consider the first option. This yields
$$\beta_1=\beta_2=\beta_3<\beta_4,\beta_5<\beta_6 \quad \text{and}\quad 0=\alpha_1=\alpha_6.$$
Since Lemma \ref{lem:tripel} we cannot have $s_3|s_5$ and $s_2|s_4$, i.e. we have $\alpha_4<\alpha_2$ and $\alpha_5<\alpha_3$. Therefore Lemma \ref{lem:min_al} applied
to the quadruple $(\alpha_2,\alpha_3,\alpha_4,\alpha_5)$ we have $\alpha_4=\alpha_5<\alpha_2,\alpha_3$. Let us note that $\beta_4<\beta_5$, since $\alpha_4=\alpha_5$ and $s_4<s_5$ holds,
and let us note that $\alpha_2<\alpha_3$ since $\beta_2=\beta_3$ and $s_2<s_3$ holds. Putting everything together, we obtain 
$$0=\alpha_1=\alpha_6<\alpha_4=\alpha_5<\alpha_2<\alpha_3\quad \text{and} \quad\beta_1=\beta_2=\beta_3<\beta_4<\beta_5<\beta_6$$
and get Case I in Table \ref{tab:p=3mod4}.

The argument for the third case, i.e. for the case that
$$\beta_3=\beta_4<\beta_1=\beta_2\leq \beta_5<\beta_6 \quad \text{and}\quad 0=\alpha_1=\alpha_6 $$
is similar. First, note that due to size restrictions (i.e. $s_2<s_4$) and Lemma \ref{lem:tripel} we have that $\alpha_2<\alpha_4$ and $\alpha_5<\alpha_3$, i.e. $\alpha_2=\alpha_5<\alpha_3,\alpha_4$. Further size
and divisibility restrictions (Lemma \ref{lem:tripel}) yield Case II in Table \ref{tab:p=3mod4}.

\subsubsection{Case B}
Similarly as in Case A we deduce that one of the three options from Case A holds.
But this time the first option cannot hold, since this would imply $\beta_1=\beta_3=\beta_5$ and therefore $s_3|s_5$, which contradicts Lemma \ref{lem:tripel}. Also the third option yields a contradiction.
Note that under the hypothesis of the third option we have $\beta_3=\beta_4<\beta_1=\beta_5\leq\beta_2<\beta_6$. But since $s_2<s_5$ we have $\alpha_2<\alpha_5$ and since $s_3\nmid s_5$ we have $\alpha_5<\alpha_3$
and Lemma \ref{lem:min_al} applied to the quadruple $(\alpha_2,\alpha_3,\alpha_4,\alpha_5)$ yields $\alpha_2=\alpha_4$, a contradiction to Lemma \ref{lem:tripel}.

Therefore we are left to consider the second option, which yields
$$\beta_1=\beta_4=\beta_5<\beta_2,\beta_3<\beta_6 \quad \text{and}\quad 0=\alpha_1=\alpha_6.$$
Due to size and divisibility restrictions we conclude that $\alpha_2<\alpha_4$ and $\alpha_3<\alpha_5$, thus $\alpha_2=\alpha_3< \alpha_4,\alpha_5$ by an application of Lemma \ref{lem:min_al}.
Further size and divisibility restrictions yield 
$$0=\alpha_1=\alpha_6<\alpha_2=\alpha_3<\alpha_4<\alpha_5\quad \text{and} \quad \beta_1=\beta_4=\beta_5<\beta_2< \beta_3<\beta_6.$$
Let us write $\alpha:=\alpha_2=\alpha_3$ and $\beta:=\beta_1=\beta_4=\beta_5$.
Now we apply Lemma \ref{lem:abcd-div} and deduce that
$$d\left|\frac{s_5-s_3}{\gcd(s_5,s_3)}\right.=\frac{p^{\alpha_5}q^\beta-p^\alpha q^{\beta_3}}{p^{\alpha}q^{\beta}}$$
and therefore we have that $d<p^{\alpha_5-\alpha}$. But, in view of $s_5=bd+1=p^{\alpha_5}q^\beta$ we get $b>p^{\alpha}q^{\beta}$ which is impossible since $b<s_1=ab+1=p^{\alpha}q^{\beta}$.

\subsubsection{Case C}
We have the same options as in Cases A and B respectively. The first option together with the hypotheses of Case C yields
$$\beta_2=\beta_5<\beta_1=\beta_3\leq \beta_4<\beta_6 \quad \text{and}\quad 0=\alpha_1=\alpha_6.$$
Size and divisibility restrictions yield $\alpha_4<\alpha_2$ and $\alpha_3<\alpha_5$, i.e. $\alpha_3=\alpha_4<\alpha_2<\alpha_5$ due to Lemma \ref{lem:min_al}. Thus the first option leads to 
 $$0=\alpha_1=\alpha_6<\alpha_3=\alpha_4<\alpha_2<\alpha_5\quad\text{and}\quad\beta_2=\beta_5<\beta_1=\beta_3\leq \beta_4<\beta_6.$$
We aim to show that this cannot hold. Let us write $\alpha:=\alpha_3=\alpha_4$, $\beta:=\beta_2=\beta_5$ and $\beta':=\beta_1=\beta_3$. Note that due to
Lemma \ref{lem:alpha_beta_zero} we have $\beta>0$. By using the $L$-notation (see Lemma \ref{lem:L-geometric}) and since $s_4>10^5$ we compute
\begin{align*}
 a^2=&\frac{(s_1-1)(s_2-1)}{s_4-1}\\
 =&\frac{s_1s_2}{s_4}+\frac{s_1s_2}{s_4(s_4-1)}-\frac{s_2}{s_4-1}-\frac{s_1}{s_4-1}+\frac{1}{s_4-1}\\
 =&\frac{p^{\alpha_2-\alpha}}{q^{\beta_4-\beta-\beta'}}+L\left(1.001 \cdot \left(\frac{q^{\beta+\beta'}p^{\alpha_2}}{q^{2\beta_4} p^{2\alpha}}+\frac{p^{\alpha_2-\alpha}}{q^{\beta_4-\beta}}+
 \frac{1}{q^{\beta_4-\beta'}p^\alpha}+\frac 1{q^{\beta_4} p^\alpha}\right) \right)
\end{align*}
It is easy to see that each summand in the $L$-term is less than $\frac{1}{5q^{\beta_4-\beta-\beta'}}$ provided that $q^{\beta'}>\frac 15 p^{\alpha_2-\alpha}$. Therefore we obtain
$$a^2=\frac{p^{\alpha_2-\alpha}}{q^{\beta_4-\beta-\beta'}}+L\left(\frac{0.9}{q^{\beta_4-\beta-\beta'}}\right).$$
Thus we conclude that $\frac{p^{\alpha_2-\alpha}}{q^{\beta_4-\beta-\beta'}}$ has to be an integer, if $q^{\beta'}>\frac 15 p^{\alpha_2-\alpha}$. In other words we have shown
that $\beta_4>\beta+\beta'$ implies $p^{\alpha_2-\alpha}\geq 5q^{\beta'}$. However Lemma \ref{lem:s4_low_bound} yields
$$q^{\beta'}\cdot p^\alpha q^\beta =\gcd(s_1,s_4)\gcd(s_2,s_4)<s_4=q^{\beta_4}p^\alpha,$$
hence $\beta_4>\beta+\beta'$. Thus we may deduce that indeed $p^{\alpha_2-\alpha}\geq 5q^{\beta'}$. But considering the inequality $s_2<s_3$ reveals that
$p^{\alpha_2}q^{\beta}<p^{\alpha}q^{\beta'}$ and therefore 
we have the inequality
$$q^{\beta'-\beta}>p^{\alpha_2-\alpha}\geq 5q^{\beta'},$$
which cannot hold.

The second option yields
$$0=\alpha_1=\alpha_6 \quad \text{and}\quad \beta_2=\beta_5<\beta_1=\beta_4\leq \beta_3<\beta_6.$$
By similar size and divisibility restrictions we obtain
$$0=\alpha_1=\alpha_6<\alpha_3=\alpha_4<\alpha_2<\alpha_5 \quad \text{and}\quad \beta_2=\beta_5<\beta_1=\beta_4\leq \beta_3<\beta_6.$$
However we have $\gcd(s_1,s_4)= p^{\alpha_1}q^{\beta_4}$ and $\gcd(s_2,s_4)=p^{\alpha_4} q^{\beta_2}$ which yields in view of Lemma \ref{lem:s4_low_bound} the inequality
$$p^{\alpha_1}q^{\beta_4}\cdot p^{\alpha_4}q^{\beta_2}=\gcd(s_1,s_4)\gcd(s_2,s_4) <s_4=p^{\alpha_4}q^{\beta_4}.$$
Thus the second option cannot hold.

If we consider the third option we obtain the following two possibilities
$$0=\alpha_1=\alpha_6 \quad \text{and}\quad \beta_2=\beta_5<\beta_3=\beta_4\leq \beta_1<\beta_6 $$
or
$$0=\alpha_1=\alpha_6 \quad \text{and}\quad \beta_3=\beta_4<\beta_2=\beta_5\leq \beta_1<\beta_6$$
respectively. By applying Lemma \ref{lem:min_al} to the quadruple $(\alpha_2,\alpha_3,\alpha_4,\alpha_5)$ and taking size and divisibility considerations into account we obtain
$$0=\alpha_1=\alpha_6<\alpha_3=\alpha_4<\alpha_2<\alpha_5 \quad \text{and}\quad \beta_2=\beta_5<\beta_3=\beta_4\leq \beta_1<\beta_6 $$
or
$$0=\alpha_1=\alpha_6<\alpha_2=\alpha_5<\alpha_3,\alpha_4 \quad \text{and}\quad \beta_3=\beta_4<\beta_2=\beta_5\leq \beta_1<\beta_6$$
respectively. But both possibilities yield a contradiction, if we apply Lemma \ref{lem:s4_low_bound}. Indeed we obtain
$$q^{\beta_4} \cdot p^{\alpha_4} q^{\beta_2}=\gcd(s_1,s_4)\gcd(s_2,s_4)<s_4=p^{\alpha_4} q^{\beta_4}$$
or
$$q^{\beta_5} \cdot p^{\alpha_5} q^{\beta_3}=\gcd(s_1,s_5)\gcd(s_3,s_5)<s_5=p^{\alpha_5} q^{\beta_5} $$
respectively.

\subsection{The case that $\alpha_2=\alpha_5=0$}

In this case we have that $\beta_2<\beta_5$ and by applying Lemma \ref{lem:min_al} to the quadruple $(\beta_2,\beta_3,\beta_4,\beta_5)$ we obtain that $\beta_3=\beta_4<\beta_2<\beta_5$. Indeed as noted
$\beta_5$ cannot be minimal, but also $\beta_2$ cannot be minimal since $0=\alpha_2<\alpha_4$ and we would obtain that $s_2|s_4$, a contradict to Lemma \ref{lem:tripel}. Therefore we obtain
$$0=\alpha_2=\alpha_5 \quad \text{and} \quad \beta_3=\beta_4<\beta_2<\beta_5.$$
Now we apply Lemma \ref{lem:min_al} to the quadruple $(\beta_1,\beta_2,\beta_5,\beta_6)$. But $\beta_5$ cannot be minimal since otherwise we would obtain $s_5|s_6$ contradicting Lemma \ref{lem:tripel}
and $\beta_2$ cannot be minimal since otherwise we would obtain $s_2<s_1$ also an obvious contradiction. Since neither $\beta_2$ nor $\beta_5$ can be minimal we have
$\beta_1=\beta_6<\beta_2<\beta_5$. Once again we apply Lemma \ref{lem:min_al}, this time to the quadruple $(\alpha_1,\alpha_3,\alpha_4,\alpha_6)$. Since $\beta_1=\beta_6$ we conclude that $\alpha_1<\alpha_6$ and
that $\alpha_6$ cannot be minimal. This leaves us with the following three options:
\begin{itemize}
 \item $\alpha_1=\alpha_3\leq \alpha_4,\alpha_6$;
 \item $\alpha_1=\alpha_4\leq \alpha_3,\alpha_6$;
 \item $\alpha_3=\alpha_4\leq \alpha_1<\alpha_6$.
\end{itemize}
The first option combined with the previous found restriction
$$ 0=\alpha_2=\alpha_5 \quad \text{and}\quad \beta_3=\beta_4,\beta_1=\beta_6<\beta_2<\beta_5 $$
yields Case III after taking further size and divisibility restrictions into account.

The second option yields in view of our usual size and divisibility restrictions 
$$0=\alpha_2=\alpha_5<\alpha_1=\alpha_4\leq \alpha_3< \alpha_6 \quad \text{and} \quad \beta_1=\beta_6<\beta_3=\beta_4<\beta_2<\beta_5. $$
However this yields
$$p^{\alpha_4} q^{\beta_1} \cdot p^{\alpha_2}q^{\beta_4} =\gcd(s_4,s_1)\gcd(s_4,s_2) > s_4=p^{\alpha_4}q^{\beta_4}$$
which contradicts Lemma~\ref{lem:s4_low_bound}.

For the third option we deuce that
$$0=\alpha_2=\alpha_5<\alpha_3=\alpha_4< \alpha_1< \alpha_6 \quad \text{and}\quad \beta_1=\beta_6<\beta_3=\beta_4<\beta_2<\beta_5 $$
by size and divisibility restrictions. But this yields a contradiction since by Lemma~\ref{lem:s4_low_bound} we have
$$p^{\alpha_4} q^{\beta_1} \cdot p^{\alpha_2}q^{\beta_4} =\gcd(s_4,s_1)\gcd(s_4,s_2)<s_4=p^{\alpha_4}q^{\beta_4}. $$

\subsection{The case that $\alpha_3=\alpha_4=0$}

We consider the quadruple $(\beta_2,\beta_3,\beta_4,\beta_5)$ and note that $\beta_3$ cannot be minimal since otherwise $s_3|s_5$ (a contradiction to Lemma \ref{lem:tripel}) and
that $\beta_4$ cannot be minimal since otherwise $s_4<s_2$, again a contradiction. Thus we have
$$0=\alpha_3=\alpha_4 \quad \text{and} \quad \beta_2=\beta_5<\beta_3,\beta_4.$$

Next, we consider the quadruple $(\beta_1,\beta_3,\beta_4,\beta_6)$ and similar as above we have that $\beta_3$ cannot be minimal since otherwise $s_3|s_6$ and $\beta_4$ cannot be minimal since otherwise
$s_4<s_1$. Thus we have $\beta_1=\beta_6<\beta_3,\beta_4$.

Finally, we consider the quadruple $(\alpha_1,\alpha_2,\alpha_5,\alpha_6)$ and obtain that $\alpha_2<\alpha_5$ since $\beta_2=\beta_5$ and $\alpha_1<\alpha_6$ since $\beta_1=\beta_6$. Therefore
Lemma \ref{lem:min_al} yields $\alpha_1=\alpha_2<\alpha_5,\alpha_6$. Putting all pieces together we obtain Case IV in Table \ref{tab:p=3mod4}.

We have chased down all possible cases and found no other possibilities for the exponents than those described in Table \ref{tab:p=3mod4}. Therefore Proposition \ref{prop:p=3mod4} is proved.

\subsection{The case that $p=2$}

By similar arguments we can also deal with the case that $p=2$. In particular we obtain the following proposition:

\begin{proposition}\label{prop:p=2}
  Let $q>2$ be a prime and assume that a $\{2,q\}$-Diophantine quadruple exists. Then one of the four cases in Table \ref{tab:p=2} holds.
\end{proposition}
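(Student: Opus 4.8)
The plan is to mirror, almost verbatim, the structure of the proof of Proposition~\ref{prop:p=3mod4}, since the case $p=2$ runs entirely parallel once we replace the conclusion ``the relevant $\alpha$-exponent vanishes'' by ``the relevant $\alpha$-exponent equals $1$''. Concretely, by Lemma~\ref{lem:quad_res} applied with $p=2$ we know that exactly one of the three normalizations $\alpha_1=\alpha_6=1$, $\alpha_2=\alpha_5=1$, or $\alpha_3=\alpha_4=1$ holds, and I would treat each of these three cases in a separate subsection, just as was done for $p\equiv 3\bmod 4$.

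In each case I would proceed by repeated applications of Lemma~\ref{lem:min_al} (the two-smallest-exponents-coincide principle) to the various quadruples of exponents $(\beta_2,\beta_3,\beta_4,\beta_5)$, $(\beta_1,\beta_2,\beta_5,\beta_6)$, $(\beta_1,\beta_3,\beta_4,\beta_6)$ and their $\alpha$-analogues, pruning the resulting branches using Lemma~\ref{lem:tripel} (which forbids the divisibilities $s_\ast\mid s_\dagger$ and hence the coincidences $\alpha_2=\alpha_4$, $\alpha_3=\alpha_5$, $\alpha_3=\alpha_6$, $\alpha_5=\alpha_6$ and their $\beta$-versions) together with the obvious size ordering $s_1<s_2<\cdots<s_6$ that follows from $a<b<c<d$. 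The only genuinely new bookkeeping is that now the minimal $\alpha$-exponents equal $1$ rather than $0$, so an identity like $\alpha_\ast=\alpha_\dagger=1$ no longer forces $s$ down to a pure $q$-power but to $2\cdot q^{\beta}$; I expect the resulting inequalities to have the same shape, only with an extra harmless factor of $2$. The branches that were eliminated in the $p\equiv 3\bmod 4$ analysis will be eliminated here by the same two tools, namely Lemma~\ref{lem:s4_low_bound} (the $\gcd$-product bound $\gcd(s_4,s_1)\gcd(s_4,s_2)<s_4$, etc.) and Lemma~\ref{lem:abcd-div}, which again pin down one of $a,b,c,d$ as a divisor of a small difference and yield a contradiction with its size.

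The step I expect to be the main obstacle is the analogue of Case~C under $\alpha_1=\alpha_6=1$, where in the $p\equiv 3\bmod 4$ proof one had to run the delicate $L$-notation computation expanding $a^2=\tfrac{(s_1-1)(s_2-1)}{s_4-1}$ and conclude that $\tfrac{p^{\alpha_2-\alpha}}{q^{\beta_4-\beta-\beta'}}$ is an integer, deriving the contradiction $q^{\beta'-\beta}>p^{\alpha_2-\alpha}\geq 5q^{\beta'}$. Here the base power $p^\alpha$ becomes $2^\alpha$ with $\alpha\geq 1$, so I would re-derive the same expansion, verify via Lemma~\ref{lem:L-geometric} and Lemma~\ref{lem:max_pq} (giving $s_4>10^5$) that each summand of the $L$-term is again bounded by $\tfrac{1}{5}q^{-(\beta_4-\beta-\beta')}$, and confirm that the integrality argument and the final contradiction survive verbatim. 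The Szalay--Ziegler congruence input (the remainders of $(a,b,c,d)$ modulo $4$ being a permutation of $(1,1,3,3)$) that underlies Lemma~\ref{lem:quad_res} in the $p=2$ case is the substitute for the quadratic-residue obstruction $-1\not\equiv\square\bmod p$, and everything downstream of it is formal.

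Having exhausted the three cases and pruned every branch by Lemmas~\ref{lem:tripel}, \ref{lem:abcd-div} and~\ref{lem:s4_low_bound}, the surviving configurations will be exactly the four entries of Table~\ref{tab:p=2}, completing the proof. Since the argument is a line-by-line transposition of the $p\equiv 3\bmod 4$ proof with $0$ replaced by $1$ in the minimal $\alpha$-exponents, I would present only the genuinely different computations in detail and indicate that the remaining verifications are identical to those of Proposition~\ref{prop:p=3mod4}.
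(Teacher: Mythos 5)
Your overall strategy is the same as the paper's: the paper also proves Proposition~\ref{prop:p=2} by observing that $a,b,c,d$ are all odd (so $\alpha_i\geq 1$ for every $i$) and then rerunning the case chase of Proposition~\ref{prop:p=3mod4} with $0$ replaced by $1$. However, there is one concrete step where the transposition is \emph{not} verbatim, and it is exactly the step you single out. For odd $p$, the elimination of Case~C (first option, $\alpha_1=\alpha_6$ minimal, $\beta_2=\beta_5$ minimal) \emph{begins} by invoking Lemma~\ref{lem:alpha_beta_zero} to guarantee $\beta:=\beta_2=\beta_5>0$, and this positivity is what makes the $L$-term small: the summand coming from $s_2/(s_4-1)$ equals $p^{\alpha_2-\alpha}/q^{\beta_4-\beta}$, which by $s_2<s_3$ is at most $q^{-\beta}\cdot q^{-(\beta_4-\beta-\beta')}$, so one needs $q^{\beta}\geq 5$. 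Lemma~\ref{lem:alpha_beta_zero} is stated and proved only for odd primes, and the paper itself stresses in Section~\ref{Sec:p=3} that no analogue of it holds for $p=2$. Hence for $p=2$ you cannot exclude the sub-case $\beta_2=\beta_5=0$, i.e. $ac+1=2^{\alpha_2}$ and $bd+1=2^{\alpha_5}$; in that sub-case the term $s_2/(s_4-1)\approx 2^{\alpha_2-\alpha}/q^{\beta_4}$ is only bounded by $q^{-(\beta_4-\beta')}$, which is comparable to the distance of the main term $2^{\alpha_2-\alpha+1}/q^{\beta_4-\beta'}$ from the nearest integer, so the integrality contradiction (and with it your final inequality $q^{\beta'-\beta}>p^{\alpha_2-\alpha}\geq 5q^{\beta'}$) no longer follows. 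This branch, $1=\alpha_1=\alpha_6<\alpha_3=\alpha_4<\alpha_2<\alpha_5$ together with $0=\beta_2=\beta_5<\beta_1=\beta_3\leq\beta_4<\beta_6$, is not in Table~\ref{tab:p=2}, so it must be killed, and your proposal as written does not kill it.

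The gap is repairable, but by a new argument rather than by transposition. For instance, in this sub-case one may assume $q\equiv 1\bmod 4$ (otherwise no $\{2,q\}$-Diophantine quadruple exists at all by \cite{Szalay:2015}), and then compare $2$-adic valuations in the second equation of \eqref{eq:SUnit}: the left-hand side is $s_3s_4-s_3-s_4=2^{\alpha}\bigl(2^{\alpha}q^{\beta'+\beta_4}-q^{\beta'}-q^{\beta_4}\bigr)$, whose inner bracket is $\equiv 2\bmod 4$, giving valuation $\alpha+1$; the right-hand side is $2^{\alpha_2+\alpha_5}-2^{\alpha_2}-2^{\alpha_5}$, of valuation $\alpha_2$; hence $\alpha_2=\alpha+1$. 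Lemma~\ref{lem:s4_low_bound} applied to $\gcd(s_4,s_1)\gcd(s_4,s_2)<s_4$ forces $\beta_4>\beta'$, and then
$a^2<\frac{s_1s_2}{s_4-1}<1.001\cdot\frac{2^{\alpha+2}q^{\beta'}}{2^{\alpha}q^{\beta_4}}\leq \frac{4.01}{q}<1$,
contradicting $a\geq 1$. Some such supplementary step (or a genuine $p=2$ substitute for Lemma~\ref{lem:alpha_beta_zero}) must be added; everything else in your plan does transpose as you describe, since for $\beta\geq 1$ the summand bounds and the concluding contradiction go through with $2^{\alpha}$ in place of $p^{\alpha}$.
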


\begin{table}[ht]
\caption{Restrictions to the exponents}\label{tab:p=2}
 \begin{tabular}{|c|l|l|}
 \hline Case & The $\alpha$ exponents & The $\beta$ exponents \\ \hline\hline
 I & $1=\alpha_1=\alpha_6<\alpha_4=\alpha_5<\alpha_2<\alpha_3$ & $\beta_1=\beta_2=\beta_3<\beta_4<\beta_5<\beta_6$ \\\hline
 II & $1=\alpha_1=\alpha_6<\alpha_2=\alpha_5<\alpha_3,\alpha_4$ & $\beta_3=\beta_4<\beta_1=\beta_2< \beta_5<\beta_6$ \\\hline
 III & $1=\alpha_2=\alpha_5<\alpha_1=\alpha_3\leq \alpha_4<\alpha_6$ & $\beta_1=\beta_6<\beta_3=\beta_4<\beta_2<\beta_5$ \\\hline
 IV & $1=\alpha_3=\alpha_4<\alpha_1=\alpha_2<\alpha_5<\alpha_6$ & $\beta_1=\beta_6<\beta_2=\beta_5< \beta_3,\beta_4$\\ \hline
 \end{tabular}

\end{table}

\begin{proof}
 By \cite[Lemma 2.4]{Szalay:2015} we know that, if $(a,b,c,d)$ is a $\{2,q\}$-Diophantine quadruple, then $a,b,c,d$ are all odd. Thus all $S$-units $s_1,\dots,s_6$ are even, hence
 $1\leq \alpha_1,\dots,\alpha_6$. Now the same chase of cases yields the same result, but with $0$ replaced by $1$.
\end{proof}

\section{The $p$-adic Wieferich condition}\label{Sec:Wief_padic}

In this section we want to investigate the case that $v_p(q^{p-1}-1)< \max\left\{2,\frac {\log q}{\log p}\right\}$. In view of our main Theorem \ref{th:gen} we would like 
to prove that under this $p$-adic divisibility assumption no $\{p,q\}$-Diophantine quadruple exists. In this section we prove such a result provided that $q$ is large enough.  

\begin{proposition}\label{prop:Wief_padic}
Let $p<q$ be odd primes such that $p\equiv 3 \mod 4$ and assume that $v_p(q^{p-1}-1)< \max\left\{2,\frac {\log q}{\log p} \right\}$. If there exists a $\{p,q\}$-Diophantine quadruple, then $q\leq 700393$. 
To be more precise, if there exists a  $\{p,q\}$-Diophantine quadruple, then
$$p<q<52038\log p.$$
\end{proposition}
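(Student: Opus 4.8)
The plan is as follows. First I would record the two reductions that the hypothesis $p\equiv 3\bmod 4$ provides. Since $p\not\equiv 1\bmod 4$, Corollary~\ref{cor:bound_AB} gives the absolute bounds $A\le 52038\log q$ and $B\le 52038\log p$, while Proposition~\ref{prop:p=3mod4} forces the exponents into one of the four configurations of Table~\ref{tab:p=3mod4}. Note also that the ``more precise'' assertion already contains the coarser one: if $p<q<52038\log p$ then $q<52038\log q$, i.e. $q/\log q<52038$, and solving this (the function $q/\log q$ being increasing for $q>e$) yields $q\le 700393$. Hence it suffices to prove $q<52038\log p$, which I would do by contradiction: assume $q\ge 52038\log p$, so that in particular $q\ge B$, and derive a contradiction in each of the four cases.

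The next ingredient is a lower bound for the dominant $p$-exponent $A$. In each case exactly one pair of $\alpha$'s vanishes (this is the content of Lemma~\ref{lem:quad_res}), so two of the $s_i$ are pure powers of $q$, while $p^A$ sits in one of the remaining units $s_i=a_\mu a_\nu+1$. Comparing sizes then pins $A$ from below. For instance in Case~I one has $s_3=ad+1=p^{\alpha_3}q^{\beta_1}$, whence $d<p^{\alpha_3}q^{\beta_1}$ because $a\ge 1$, and $s_6=cd+1=q^{\beta_6}$; since $c<d$ this gives $q^{\beta_6}-1=cd<d^2<p^{2\alpha_3}q^{2\beta_1}$ and therefore $A=\alpha_3>\tfrac{(\beta_6-2\beta_1)\log q}{2\log p}$. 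Because $\beta_1,\beta_2$ are tiny by Lemma~\ref{lem:ub_beta12}, this is essentially $A\gg \tfrac{B\log q}{\log p}$. The analogous estimates in Cases~II--IV are obtained the same way, reading off from Table~\ref{tab:p=3mod4} which unit carries $p^A$ and which are pure $q$-powers.

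The $p$-adic input is where the Wieferich hypothesis enters. Using the divisibilities of Lemma~\ref{lem:abcd-div} and Lemma~\ref{lem:s4_low_bound} together with the two pure $q$-power units, I would extract a congruence of the shape $q^{x}\equiv \pm1\pmod{p^{A'}}$ with $x\le B$ and with $A'$ a fixed positive fraction of $A$. Lemma~\ref{lem:p-adic}, applied with the roles of $p$ and $q$ interchanged, then bounds $A'\le v_p(q^{x}\mp1)\le u_p+v_p(x)$; combined with the standing hypothesis $u_p<\max\{2,\log q/\log p\}$ and $v_p(x)\le\log x/\log p\le\log B/\log p$ this yields $A'<\max\{2,\log q/\log p\}+\log B/\log p$. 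Throughout this extraction the passage between the $s_i$ and the congruences is controlled explicitly by the geometric-series estimate of Lemma~\ref{lem:L-geometric}, exactly as in the proof of Proposition~\ref{prop:p=3mod4}.

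Finally I would play the two bounds against each other. Substituting the upper bound for $A'$ into the lower bound for $A$ collapses $B$ --- and hence, via Table~\ref{tab:p=3mod4}, every exponent --- into a bounded range; a last size comparison of the two pure $q$-power units against the $p^A$-unit then contradicts the standing assumption $q\ge 52038\log p$ (recall $q>10^5$ by Lemma~\ref{lem:max_pq}, so the error terms are negligible). The main obstacle I anticipate is precisely the $p$-adic extraction of the third paragraph: each of the four cases places $p^A$ and the two pure $q$-power units differently, so one must choose the right combination of divisibilities from Lemma~\ref{lem:abcd-div} in each case and, more delicately, guarantee that the resulting modulus $p^{A'}$ is comparable to $p^A$ rather than to a much smaller power --- if $A'$ were only of size $\log q/\log p$ the two bounds would no longer conflict. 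Keeping $A'=\Theta(A)$ while holding $x\le B$ is the crux of the whole argument.
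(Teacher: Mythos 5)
Your plan hinges on the step you yourself call the crux --- extracting a congruence $q^{x}\equiv\pm 1\pmod{p^{A'}}$ with $x\le B$ and $A'$ a fixed positive fraction of $A$ --- and that step is not just an anticipated obstacle: it is not available from the identities at hand, and the paper never performs it. Reducing \eqref{eq:SUnit} modulo the largest power of $p$, say modulo $p^{\alpha_3}=p^{A}$ in Case I of Table \ref{tab:p=3mod4} (where $s_1=q^{\beta_1}$, $s_6=q^{\beta_6}$, $s_3=p^{\alpha_3}q^{\beta_1}$, $s_4=p^{\alpha_4}q^{\beta_4}$), yields
\[
q^{\beta_1+\beta_6}-q^{\beta_1}-q^{\beta_6}+p^{\alpha_4}q^{\beta_4}\equiv 0 \pmod{p^{\alpha_3}},
\]
and since the mixed term has $p$-adic valuation exactly $\alpha_4<\alpha_3$, the only information this carries is $v_p\bigl(q^{\beta_1+\beta_6}-q^{\beta_1}-q^{\beta_6}\bigr)=\alpha_4$: a statement about a three-term sum and about the \emph{smallest} positive $\alpha$-exponent, not about $A$, and not of a shape Lemma \ref{lem:p-adic} can process. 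The clean two-term divisibilities that do follow from \eqref{eq:SUnit} have moduli $p^{\alpha_i-\alpha_j}$ for intermediate differences --- e.g. $p^{\alpha_2-\alpha_4}\mid q^{\beta_5-\beta_4}-1$ in Case I --- and such a difference bears no relation to $A$; it can a priori be tiny while $A$ is huge. This is why the paper uses the Wieferich hypothesis in the opposite direction from you: via Lemma \ref{lem:p-adic-bound} it concludes that these differences are \emph{small}, $p^{\alpha_i-\alpha_j}<q^2$, and it never attempts to bound $A$ from above by quantities of size $\log q/\log p$.

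Even granting your crux step, your endgame does not close. Forcing $B$ into a bounded range is not in conflict with $q\ge 52038\log p$: bounded-exponent configurations (say $\beta_1=1$, $\beta_4=3$ in Case I) are precisely what remains to be excluded, and no size comparison does it. In the paper these configurations occupy the bulk of the proof: Lemma \ref{lem:p-adic-bound} together with Lemmas \ref{lem:abcd-div} and \ref{lem:s4_low_bound} pins the exponents to specific values ($2\beta_1<\beta_4<2\beta_1+2$, hence $\beta_4=2\beta_1+1$, then $\beta_1\le 2$, and so on), and the contradiction is an integrality one: the $L$-notation expansion of $a^2=(s_1-1)(s_2-1)/(s_4-1)$ shows that the integer $a^2$ would have to lie within $4.004/q^2$ of the non-integer $p^{\alpha_2-\alpha_4}/q$, or forces an impossible divisibility such as $q^2\mid p^{\alpha_2}(q-1)$; Cases II--IV are handled by variants of this. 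Your plan contains no substitute for these arguments. A further flaw: you invoke Lemma \ref{lem:ub_beta12} to make $\beta_1,\beta_2$ tiny, but that lemma assumes $B>27826\log p$, which is not known under the contradiction hypothesis $q\ge 52038\log p$; in the complementary regime your lower bound degenerates to $A\gtrsim(\beta_6-2\beta_1)\log q/(2\log p)$, where $\beta_6-2\beta_1$ need not be comparable to $B$, so it no longer conflicts with any upper bound of size $\log q/\log p$. Thus each of the two regimes of your argument fails for a different reason.
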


In order to prove Proposition \ref{prop:Wief_padic} we have to consider each case of Table \ref{tab:p=3mod4} individually. However, before we start with the proof of Proposition \ref{prop:Wief_padic}
let us state the following lemma which will be frequently used throughout this section.

\begin{lemma}\label{lem:p-adic-bound}
 Let $p<q$ be odd primes such that $p\equiv 3 \mod 4$ and let $*,\dagger \in \{0,1,2,3,4,5,6\}$. Moreover, set $\beta_0=0$ and assume that $q\geq 52038 \log p$.
 If $v_p(q^{p-1}-1)< \max\left\{2,\frac {\log q}{\log p} \right\}$, then we have 
 $$p^{v_p\left(q^{|\beta_*-\beta_\dagger|}\pm 1\right)}<q^2.$$
\end{lemma}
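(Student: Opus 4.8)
The plan is to bound the integer $z:=v_p\!\left(q^{n}\pm 1\right)$, where $n:=|\beta_*-\beta_\dagger|$, and to show $z\log p<2\log q$, which is exactly the asserted inequality $p^{z}<q^{2}$. The only fact about $n$ that I need is a crude size bound: since $p\equiv 3\bmod 4$, Corollary \ref{cor:bound_AB} gives $B\log q\leq 52038\log p\log q$, hence $B\leq 52038\log p$, and therefore (with $\beta_0=0$) we have $n\leq B\leq 52038\log p\leq q$ by the standing hypothesis $q\geq 52038\log p$. In particular $\log n\leq\log q$ and $n<p^{2}$ whenever $q<p^{2}$. I will assume $n\geq 1$ throughout, since $\beta_*=\beta_\dagger$ makes $q^{n}-1=0$ and the statement is applied only for distinct exponents.

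Next I would invoke Lemma \ref{lem:p-adic} with the roles of $p$ and $q$ interchanged (the lemma is stated without assuming $p<q$). For the minus sign it gives directly
$$v_p\!\left(q^{n}-1\right)\leq u_p+\frac{\log n}{\log p},$$
the error term coming from $v_p\!\left(n/\ord_p(q)\right)\leq v_p(n)<\tfrac{\log n}{\log p}$. For the plus sign, either $\ord_p(q)$ is odd, in which case $p\nmid q^{n}+1$ and $z=0<\tfrac{2\log q}{\log p}$ trivially, or $\ord_p(q)$ is even and the same bound $v_p\!\left(q^{n}+1\right)\leq u_p+\tfrac{\log n}{\log p}$ holds. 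Thus in every nontrivial case $z\leq u_p+\tfrac{\log n}{\log p}$, and combining this with the $p$-adic Wieferich hypothesis $u_p<\max\{2,\tfrac{\log q}{\log p}\}$ yields $z<\max\{2,\tfrac{\log q}{\log p}\}+\tfrac{\log n}{\log p}$.

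The final step is a case distinction on this maximum. If $q\geq p^{2}$, then $\max\{2,\tfrac{\log q}{\log p}\}=\tfrac{\log q}{\log p}$, and since $\tfrac{\log n}{\log p}\leq\tfrac{\log q}{\log p}$ (from $n\leq q$) we obtain $z<\tfrac{2\log q}{\log p}$, i.e. $p^{z}<q^{2}$. If instead $q<p^{2}$, then $u_p<2$ together with $u_p\geq 1$ (Fermat: $p\mid q^{p-1}-1$ since $p\neq q$) forces $u_p=1$, while $n\leq q<p^{2}$ gives $\tfrac{\log n}{\log p}<2$; hence $z<1+2=3$, so the integer $z$ satisfies $z\leq 2$ and $p^{z}\leq p^{2}<q^{2}$ because $p<q$. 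In both cases $p^{v_p(q^{n}\pm 1)}<q^{2}$, as claimed.

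I expect the genuine subtleties to be bookkeeping rather than conceptual. One must use the \emph{sharp} form of Lemma \ref{lem:p-adic} with the error term $\tfrac{\log n}{\log p}$ (as produced by its proof), since a term of size $\tfrac{n}{\log p}$ would be far too large to be useful. One must also keep the two regimes $q\geq p^{2}$ and $q<p^{2}$ separate, because the Wieferich hypothesis and the target $q^{2}$ interact differently: in the first regime the bound is genuinely logarithmic, whereas in the second the whole point is that $u_p$ collapses to $1$ and $\tfrac{\log n}{\log p}$ is controlled by $n<p^{2}<q^{2}$. The inequality $n\leq q$, itself a consequence of Corollary \ref{cor:bound_AB}, is what makes both estimates go through cleanly.
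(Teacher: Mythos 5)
Your proposal is correct, and its core is the same as the paper's proof: bound $n=|\beta_*-\beta_\dagger|$ by $52038\log p$ (you cite Corollary \ref{cor:bound_AB}, which is in fact the form needed here, since Proposition \ref{prop:bound_AB} alone leaves open Cases I and II and it is precisely $p\equiv 3 \bmod 4$ that excludes them), then apply Lemma \ref{lem:p-adic} in its sharp form --- with error term $v_p(n)\leq \log n/\log p$ as produced by its proof, not the stated $n/\log p$, which both you and the paper implicitly rely on --- and finally feed in the Wieferich hypothesis $u_p<\max\{2,\log q/\log p\}$. Where you genuinely diverge is the endgame. The paper concludes with the single chain $\max\{2,\tfrac{\log q}{\log p}\}+\tfrac{\log(52038\log p)}{\log p}\leq \tfrac{2\log q}{\log p}$, asserted to hold whenever $52038\log p\leq q$; this is justified when the maximum equals $\log q/\log p$, i.e.\ when $q\geq p^2$, but in the regime $q<p^2$ it amounts to $52038\,p^2\log p\leq q^2$, which does not follow from $q>p$ and $q\geq 52038\log p$ (take $p$ large enough that $p>52038\log p$ and $q$ the next prime after $p$). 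Your explicit case split covers exactly this regime: for $q<p^2$ the hypothesis forces $u_p<2$, Fermat gives $u_p\geq 1$ hence $u_p=1$, and $n\leq q<p^2$ forces the integer $v_p(n)$ to be at most $1$, so $p^z\leq p^2<q^2$. So your argument is not merely correct; at the one point where the paper's inequality chain is incomplete, your version supplies the missing case. Your explicit treatment of the $+$ sign via the parity of $\ord_p(q)$, and your remark that $\beta_*=\beta_\dagger$ must be excluded in the $-$ case (where the valuation is infinite), are further details the paper passes over silently.
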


\begin{proof}
 Since Proposition \ref{prop:bound_AB} we know that $|\beta_*-\beta_\dagger|<52038\log p$. Due to Lemma \ref{lem:p-adic} and our assumption that $u_p<\max\left\{2,\frac {\log q}{2\log p} \right\}$ we obtain
 \begin{align*}
  v_p\left(q^{|\beta_*-\beta_\dagger|}-1\right)=& u_p+v_p(|\beta_*-\beta_\dagger|)\\
  <&\max\left\{2,\frac {\log q}{\log p} \right\} +\frac{\log (52038 \log p)}{\log p}\\
  \leq&\frac{2\log q}{\log p},
 \end{align*}
 provided that $52038 \log p\leq q$, which implies immediately the statement of the lemma.
\end{proof}

Let us assume that Proposition \ref{prop:Wief_padic} is false and that $q\geq 52038 \log p$, i.e. Lemma \ref{lem:p-adic-bound} applies and $p^{v_p\left(q^{|\beta_*-\beta_\dagger|}\pm 1\right)}<q^2$.
We will show that under this assumption we obtain a contradiction in each case described in Proposition \ref{prop:p=3mod4} (see Table \ref{tab:p=3mod4}).

\subsection{Case I}

In this case we have 
\begin{align*}
 ab+1=&q^{\beta}, &  bc+1=&p^{\alpha}q^{\beta_4}, \\
 ac+1=&p^{\alpha_2}q^{\beta}, &  bd+1=&p^{\alpha}q^{\beta_5}, \\
 ad+1=&p^{\alpha_3}q^{\beta}, &  cd+1=&q^{\beta_6},
\end{align*}
with $0<\alpha<\alpha_2<\alpha_5$ and $\beta<\beta_4<\beta_5<\beta_6$. Note that we may assume that $\beta>0$ since otherwise $s_1=ab+1=1$ would yield an obvious contradiction.
We consider the second equation of system \eqref{eq:SUnit} and obtain after dividing through the common denominator $p^\alpha q^\beta$ the
equation
\begin{equation}\label{eq:CaseI}
 q^{\beta_4-\beta}\left(q^{\beta_5-\beta_4}-1\right)=p^{\alpha_2}q^{\beta_5}-p^{\alpha_3}q^{\beta_4}+p^{\alpha_3-\alpha}-p^{\alpha_2-\alpha}.
\end{equation}
That is $p^{\alpha_2-\alpha}|q^{\beta_5-\beta_4}-1$ and Lemma \ref{lem:p-adic-bound} implies that $p^{\alpha_2-\alpha}<q^2$.

Next, we observe that since Lemma \ref{lem:s4_low_bound} we have 
$$q^\beta \cdot p^{\alpha}q^\beta=\gcd(s_1,s_4)\gcd(s_2,s_4)<s_4=p^{\alpha}q^{\beta_4},$$
i.e. $2\beta< \beta_4$. On the other hand we have that $(ab+1)(ac+1)>bc+1$ and obtain
$$p^{\alpha_2}q^{2\beta}>p^\alpha q^{\beta_4}.$$
Therefore we obtain that $2\beta+2>\beta_4$, since $p^{\alpha_2-\alpha}<q^2$, hence $\beta_4=2\beta+1$.

Next we compute $a^2$. Using the $L$-notation and Lemma \ref{lem:L-geometric} we obtain:
\begin{align*}
 a^2 =& \frac{(s_1-1)(s_2-1)}{s_4-1}\\
 =&\frac{s_1s_2}{s_4}+\frac{s_1s_2}{s_4(s_4-1)}-\frac{s_2}{s_4-1}- \frac{s_1}{s_4-1}+\frac{1}{s_4-1}  \\
 =&\frac{p^{\alpha_2-\alpha}}{q}+\frac{p^{\alpha_2-\alpha}}{q(p^\alpha q^{2\beta+1}-1)}-\frac{p^{\alpha_2-\alpha}}{q^{\beta+1}-\frac{1}{q^\beta}}-\frac{1}{p^\alpha q^{\beta+1}-\frac 1{q^\beta}}+\frac{1}{p^\alpha q^{2\beta+1}-1}\\
 =& \frac{p^{\alpha_2-\alpha}}{q}+L\left(1.001\cdot \left(\frac{p^{\alpha_2-\alpha}}{p^{\alpha}q^{2\beta+2}}+\frac{p^{\alpha_2-\alpha}}{q^{\beta+1}}
+\frac{1}{p^\alpha q^{\beta+1}}+\frac{1}{p^\alpha q^{2\beta+1}}\right)\right)\\
 =& \frac{p^{\alpha_2-\alpha}}{q}+L\left(1.001\cdot \left(\frac{1}{p^{\alpha}q^{2\beta}}+\frac{1}{q^{\beta-1}}
 +\frac{1}{p^\alpha q^{\beta+1}}+\frac{1}{p^\alpha q^{2\beta+1}}\right)\right)
\end{align*}
To obtain the last inequality we used the fact that $p^{\alpha_2-\alpha}<q^2$. Let us assume for the moment that $\beta\geq 3$. Then we obtain
$$a^2=\frac{p^{\alpha_2-\alpha}}{q}+L\left(\frac{4.004}{q^2}\right).$$
Obviously $\frac{p^{\alpha_2-\alpha}}{q}$ is not an integer. But the distance to the nearest integer is at least $\frac 1q$. Thus we deduce that $a^2$ cannot be an integer, hence a contradiction.

Therefore we may assume that $0<\beta\leq 2$. Let us discuss the case that $\beta=1$ first. A similar computation of $a^2$ as before shows that
$$a^2=\frac{p^{\alpha_2-\alpha}}{q}-\frac{p^{\alpha_2-\alpha}}{q^2}+L\left(1.001 \cdot \left(\frac{1}{p^{\alpha}q^{2}}+\frac{1}{p^{\alpha} q^{3}}
 +\frac{1}{p^{\alpha} q^{2}}+\frac{1}{p^\alpha q^{3}} \right)\right)
 $$
and since $p^\alpha\geq p\geq 3$ and $q>10^5$ we conclude that 
$$a^2=\frac{p^{\alpha_2-\alpha}}{q}-\frac{p^{\alpha_2-\alpha}}{q^2}+L\left(\frac{0.7}{q^2}\right)$$
is an integer. This implies that
$$q^2\left| \left(qp^{\alpha_2}-p^{\alpha_2}\right)\right.$$
which is an obvious contradiction.

In the case that $\beta=2$ we deduce by reconsidering the asymptotic expansion of $a^2$ a similar contradiction.

\subsection{Case II}

In the second case we have 
\begin{align*}
 ab+1=&q^{\beta'}, &  bc+1=&p^{\alpha_4}q^{\beta}, \\
 ac+1=&p^{\alpha}q^{\beta'}, &  bd+1=&p^{\alpha}q^{\beta_5}, \\
 ad+1=&p^{\alpha_3}q^{\beta}, &  cd+1=&q^{\beta_6},
\end{align*}
with $0<\alpha<\alpha_3,\alpha_4$ and $\beta<\beta'<\beta_5<\beta_6$. Note that $\beta=0$ can be excluded due to Lemma \ref{lem:alpha_beta_zero}. We consider the second equation of system \eqref{eq:SUnit} and obtain 
\begin{equation}\label{eq:CaseII}
 q^{\beta'-\beta}\left(q^{\beta_5-\beta'}+1\right)=p^{\alpha}q^{\beta_5+\beta'-\beta}+p^{\alpha_3-\alpha}+p^{\alpha_4-\alpha}-p^{\alpha_3+\alpha_4-\alpha}q^\beta.
\end{equation}
Let $M=\min\{\alpha,\alpha_3-\alpha,\alpha_4-\alpha\}$, then $p^{M}|q^{\beta_5-\beta'}+1$ and Lemma \ref{lem:p-adic-bound} implies that $p^{M}<q^2$.
That is we have to consider three subcases:
\begin{description}
 \item[Case A] $M=\alpha$ or
 \item[Case B] $M=\alpha_3-\alpha$ or
 \item[Case C] $p^{\alpha_4-\alpha}<q^2$ and $M\neq \alpha$.
\end{description}
Let us note that we will show that Case B will imply that $p^{\alpha_4-\alpha}<q^2$. This is the reason why we consider this more general statement for Case C.

\subsubsection{Case A}

First, we observe that
$$
\frac{c}{b}=\frac{s_2-1}{s_1-1}=\frac{q^{\beta'}p^\alpha-1}{q^{\beta'}-1}=p^\alpha \frac{q^{\beta'}}{q^{\beta'}-1}-\frac1{q^{\beta'}-1}<1.001 p^{\alpha}
$$
since $q^{\beta'}\geq q>10^5$. Therefore we have
$$q^{\beta_6}=cd+1<\frac{c}{b}(bd+1)<1.001p^{2\alpha}q^{\beta_5}$$
and we obtain that $\beta_6-\beta_5\leq 4$.

Let us assume that $\beta_6-\beta_5=3$. Then Lemma \ref{lem:abcd-div} yields that $d|\frac{s_6-s_5}{\gcd(s_6,s_5)}=\frac{q^{\beta_6}-q^{\beta_5}p^\alpha}{q^{\beta_5}}$
and we obtain that $d<q^3$. Further, we deduce that $q^6>d^2>cd+1=q^{\beta_6}$. Therefore $\beta_6\leq 5$ and by our assumption that $\beta_6-\beta_5=3$  we obtain that
$\beta_5\leq 2$. On the other hand we have $0<\beta<\beta'<\beta_5\leq 2$ which yields an obvious contradiction. Similar arguments show that $\beta_6-\beta_5=1,2$ cannot hold.

Therefore we are left with the case that $\beta_6-\beta_5=4$. In this case Lemma \ref{lem:abcd-div} yields similarly as above that $\beta_6\leq 7$ and we obtain
$$\beta=1,\qquad \beta'=2,\qquad \beta_5=3,\qquad \beta_6=7.$$
Let us apply Lemma \ref{lem:s4_low_bound}, then we obtain that
$$q^3\cdot p^\alpha=\gcd(s_1,s_5)\gcd(s_3,s_5)<s_5=q^3p^{\alpha}$$
which is a contradiction.

\subsubsection{Case B}

Due to Case A we may assume that $\alpha>\alpha_3-\alpha$.
If we consider the inequality $s_2<s_3$, then we obtain that $q\leq q^{\beta'-\beta}<p^{\alpha_3-\alpha}<q^2$ and therefore $\beta'=\beta+1$. Note that this inequality also
implies that $q<p^{\alpha_3-\alpha}$ and therefore we also have that $p^\alpha>q$ and $p^{\alpha_4}=p^{\alpha_4-\alpha}p^\alpha>q^2$. Next, we compute
\begin{align*}
 a^2=& \frac{(s_1-1)(s_2-1)}{s_4-1}\\
 =& \frac{p^{\alpha}q^{2\beta'}-p^{\alpha}q^{\beta'}-q^{\beta'}+1}{p^{\alpha_4} q^{\beta}-1}\\
 =&  \left(p^{\alpha}q^{2\beta'}-p^{\alpha}q^{\beta'}\right)\left(\frac{1}{p^{\alpha_4} q^{\beta}}+\frac{1}{p^{\alpha_4} q^{\beta}(p^{\alpha_4} q^{\beta}-1)}\right)-
 (q^{\beta'}-1)\frac{1}{p^{\alpha_4} q^{\beta}-1}\\
 =& \frac{q^{\beta'+1}-q}{p^{\alpha_4-\alpha}}+\stackrel{X:=}{\overbrace{ \frac{q^{\beta'+1}-q}{p^{\alpha_4-\alpha}(p^{\alpha_4} q^{\beta}-1)}}}
 -\stackrel{Y:=}{\overbrace{ (q^{\beta'}-1)\frac{1}{p^{\alpha_4} q^{\beta}-1}}}
\end{align*}
We obviously have $X,Y>0$. We want to show that also $X,Y<p^{\alpha-\alpha_4}$ holds. Indeed $X<p^{\alpha-\alpha_4}$ holds since
$$X<\frac{q^{\beta'+1}}{p^{\alpha_4-\alpha} (p^{\alpha_4} q^{\beta}-1)}=\frac{1}{p^{\alpha_4-\alpha}}\cdot \frac{q^2}{p^{\alpha_4}-\frac{1}{q^\beta}}
<\frac{1}{p^{\alpha_4-\alpha}}.$$
In particular note that $p^{\alpha_4}-\frac{1}{q^\beta}>q^2$ since $p^{\alpha_4}>q^2$ and both $p^{\alpha_4}$ and $q^2$ are integers.
A similar computation shows that $Y<p^{\alpha-\alpha_4}$ holds:
$$Y<\frac{q^{\beta'}}{p^{\alpha_4} q^{\beta}-1}=\frac{1}{p^{\alpha_4-\alpha}}\cdot \frac{q}{p^{\alpha}-\frac{1}{q^\beta p^{\alpha_4-\alpha}}}
<\frac{1}{p^{\alpha_4-\alpha}}.$$
Therefore $|X-Y|<\frac{1}{p^{\alpha_4-\alpha}}$ and  $\frac{q^{\beta'+1}-q}{p^{\alpha_4-\alpha}}$ has to be an integer. Thus
$$p^{\alpha_4-\alpha}|q^{\beta'}-1$$
and Lemma \ref{lem:p-adic-bound} yields $p^{\alpha_4-\alpha}<q^2$. But this implies that we are in Case C.

\subsubsection{Case C}

First of all note that we may assume that $p^\alpha>q$, since $p^\alpha<q$ would imply that either $p^{\alpha_3-\alpha}<q$ or that $p^{\alpha_4-\alpha}<q$. The first case can be
excluded as in Case B. The second case can be excluded by using Lemma \ref{lem:s4_low_bound}:
$$q^\beta\cdot p^\alpha q^\beta= \gcd(s_4,s_1)\gcd(s_4,s_2)<s_4=q^{\beta}p^{\alpha_4}$$
which yields $q^\beta<p^{\alpha_4-\alpha}$. If $p^{\alpha_4-\alpha}<q$ we would deduce that $\beta=0$ a contradiction to Lemma \ref{lem:alpha_beta_zero}. Nevertheless we
obtain $\beta=1$. By the inequality $s_2<s_4$ we also obtain that $q^{\beta'-\beta}<p^{\alpha_4-\alpha}<q^2$, hence $\beta=1$ and $\beta'=2$.

Next we observe that by Lemma \ref{lem:abcd-div} we have $c|\frac{s_4-s_2}{\gcd(s_4,s_2)}$ and therefore $c<\frac{qp^{\alpha_4}}{qp^{\alpha}}<q^2$. Since
$ac+1=q^2p^\alpha$ this yields that $a>p^\alpha>q$ and therefore we get $q^2=ab+1>p^{2\alpha}>q^2$, a contradiction. 

\subsection{Case III}

In the third case we have:
\begin{align*}
 ab+1=&p^\alpha q^{\beta}, &  bc+1=&p^{\alpha_4}q^{\beta'}, \\
 ac+1=&q^{\beta_2}, &  bd+1=&q^{\beta_5}, \\
 ad+1=&p^{\alpha}q^{\beta'}, &  cd+1=&p^{\alpha_6}q^{\beta},
\end{align*}
with $0<\alpha\leq\alpha_4<\alpha_6$ and $\beta<\beta'<\beta_2<\beta_5$.
We consider the last equation of system \eqref{eq:SUnit} and obtain in combination with Lemma \ref{lem:p-adic-bound} that $p^{\alpha_4-\alpha}<q^2$. Indeed, we can do slightly better
and show that $p^{\alpha_4-\alpha}<q^2-1$, since $\gcd(q+1,q-1)=2$ and $p^{\alpha_4-\alpha}=q^2-1=(q-1)(q+1)$ would yield a contradiction. We use Lemma \ref{lem:s4_low_bound}
to obtain
$$q^{\beta}p^\alpha \cdot q^{\beta'}=\gcd(s_4,s_1)\gcd(s_4,s_2)<s_4=q^{\beta'}p^{\alpha_4}, $$
and therefore $q^\beta<p^{\alpha_4-\alpha}<q^2$, which implies that $\beta=1$. Also note that since $\beta=0$ is excluded due to Lemma \ref{lem:alpha_beta_zero} we  
conclude that $q<p^{\alpha_4-\alpha}$. We also note that due to $s_2<s_3$ we have that $p^\alpha>q$, hence $p^{\alpha_4}>q^2$. 

Now, let us consider the second equation of system \eqref{eq:SUnit}. After dividing through a common denominator and rearranging terms we obtain
$$p^\alpha(p^{\alpha_4-\alpha}+1)=q^{\beta'}p^{\alpha_4}-q^{\beta_2+\beta_5-\beta'}+q^{\beta_2-\beta'}+q^{\beta_5-\beta'}.$$
Since $p^{\alpha_4-\alpha}+1<q^2$ the $q$-adic valuation of the right hand side is at most $1$. Therefore we have either $\beta'=1$ or $\beta_2-\beta'=1$. Since $\beta=\beta'=1$
is a contradiction we have that $\beta_2=\beta'+1$. 

Next, we compute
\begin{align*}
 a^2=&\frac{(s_1-1)(s_2-1)}{s_4-1}\\
 =& \frac{p^{\alpha}q^{\beta'+2}-q^{\beta'+1}-p^{\alpha}q+1}{p^{\alpha_4}q^{\beta'}-1}\\
 =&  \frac{q^2}{p^{\alpha_4-\alpha}}-\frac{q}{p^{\alpha_4}}+L\left(1.001 \cdot \left(\frac{q^2}{p^{2\alpha_4-\alpha}q^{\beta'}}+ \frac{q}{p^{2\alpha_4}q^{\beta'}}+\frac{q}{p^{\alpha_4-\alpha}q^{\beta'}}
 +\frac{1}{p^{\alpha_4} q^{\beta'}}\right)\right)\\ 
 =&  \frac{q^2}{p^{\alpha_4-\alpha}}-\frac{q}{p^{\alpha_4}}+L\left(\frac{1.001}{p^{\alpha_4-\alpha}} \left(\frac{1}{q^2}+ \frac{1}{q^5}+\frac{1}{q^2}+\frac{1}{q^4}\right)\right)\\
 =&  \frac{q^2}{p^{\alpha_4-\alpha}}-\frac{1}{p^{\alpha_4-\alpha}}\cdot \frac{q}{p^\alpha}+L\left(\frac{2.004}{p^{\alpha_4-\alpha}}\cdot \frac{1}{q}\right).
\end{align*}
Therefore we have that $1>a^2p^{\alpha_4-\alpha}-q^2>-2$. This implies that
$$a^2=\frac{q^2-1}{p^{\alpha_4-\alpha}}\qquad \text{or} \qquad a^2=\frac{q^2}{p^{\alpha_4-\alpha}}$$
is an integer. The second option clearly cannot hold. Thus $p^{\alpha_4-\alpha}|q^2-1=(q+1)(q-1)$. 
Since $\gcd(q+1,q-1)=2$ and $p$ is odd we conclude that $p^{\alpha_4-\alpha}\leq \frac{q+1}2<q$ which contradicts our previous
conclusion that $q<p^{\alpha_4-\alpha}$.

\subsection{Case IV}

In the last case we have:
\begin{align*}
 ab+1=&p^\alpha q^{\beta}, &  bc+1=&q^{\beta_4}, \\
 ac+1=&p^{\alpha}q^{\beta'}, &  bd+1=&p^{\alpha_5}q^{\beta'}, \\
 ad+1=&q^{\beta_3}, &  cd+1=&p^{\alpha_6}q^{\beta},
\end{align*}
with $0<\alpha<\alpha_5<\alpha_6$ and $0<\beta<\beta'<\beta_3,\beta_4$. Note that $\beta=0$ is not possible due to Lemma \ref{lem:alpha_beta_zero}
In this case we consider the first equation of system~\eqref{eq:SUnit} and obtain in combination with Lemma~\ref{lem:p-adic-bound} that $p^{\alpha_5-\alpha}<q^2$. Similar as in Case III we may even
assume that $p^{\alpha_5-\alpha}<q^2-1$. Now applying Lemma \ref{lem:s4_low_bound}
we obtain
$$p^\alpha q^\beta \cdot q^{\beta'}=\gcd(s_5,s_1)\gcd(s_5,s_3)<s_5=p^{\alpha_5}q^{\beta'},$$
hence $q^\beta<p^{\alpha_5-\alpha}<q^2$ and therefore $\beta=1$. Now, let us consider the second equation of system~\eqref{eq:SUnit}. After dividing through a common denominator and rearranging terms we obtain
$$p^\alpha(p^{\alpha_5-\alpha}+1)=q^{\beta'}p^{\alpha_5+\alpha}-q^{\beta_3+\beta_4-\beta'}+q^{\beta_3-\beta'}+q^{\beta_4-\beta'}.$$
Since $p^{\alpha_5-\alpha}+1<q^2$ the $q$-adic valuation of the right hand side is at most $1$. Therefore we have either $\beta'=1$ or $\beta_4-\beta'=1$ or $\beta_3-\beta'=1$. Since $\beta=\beta'=1$
is a contradiction the first case cannot hold.

Let us assume that $\beta_4=\beta'+1$. We apply Lemma \ref{lem:s4_low_bound} and obtain
$$q\cdot q^{\beta'}=\gcd(s_4,s_1)\gcd(s_4,s_2)<s_4=q^{\beta'+1}, $$
an obvious contradiction.

Therefore we have $\beta_3=\beta'+1$ and $\beta_4\geq \beta'+2$. But $\beta_3=\beta'+1$ yields together with $s_2<s_3$ that $p^\alpha<q$ and therefore $p^{\alpha_5}<q^3$ and due to $s_4<s_5$ we
deduce that $\beta_4=\beta'+2$. Let us compute
\begin{align*}
 a^2=&\frac{(s_1-1)(s_2-1)}{s_4-1}\\
 =& \frac{p^{2\alpha}q^{1+\beta'}-p^{\alpha}q^{\beta'}-p^{\alpha}q+1}{q^{\beta'+2}-1}\\
 =&  \left(p^{2\alpha}q^{1+\beta'}-p^{\alpha}q^{\beta'}\right)\left(\frac{1}{q^{\beta'+2}}+\frac{1}{q^{\beta'+2}(q^{\beta'+2}-1)}\right)-
 (p^\alpha q-1)\frac{1}{q^{\beta'+2}-1}\\
 =& \frac{p^{\alpha}(p^{\alpha}q-1)}{q^2}+\stackrel{X:=}{\overbrace{ \frac{p^\alpha(p^\alpha q-1)}{q^2(q^{\beta'+2}-1)}}}
 -\stackrel{Y:=}{\overbrace{ (p^\alpha q-1)\frac{1}{q^{\beta'+2}-1}}}
\end{align*}
Since $p^\alpha<q$ it is easy to see that $0<X,Y<\frac{1}{q^2}$ and therefore $|X-Y|<\frac{1}{q^2}$. Hence $\frac{p^{\alpha}(p^{\alpha}q-1)}{q^2}$ is an 
integer, which contradicts the fact that $q^2\nmid p^{\alpha}(p^{\alpha}q-1)$.

Since in all four cases which are described in Proposition \ref{prop:p=3mod4} (see Table \ref{tab:p=3mod4}) the assumption that $q\geq 52038 \log p$ yields a contradiction we have
proved Proposition \ref{prop:Wief_padic} completely.

\section{The case $p=2,3$}\label{Sec:p=3}

We start with the easier case that $p=3$. The first main result of this section is the following proposition which is the content of the first part of Theorem \ref{th:p23}:

\begin{proposition}\label{prop:Wief_3}
There is no $\{3,q\}$-Diophantine quadruple.   
\end{proposition}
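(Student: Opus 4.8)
The plan is to combine the structural restrictions from Proposition~\ref{prop:p=3mod4} (Table~\ref{tab:p=3mod4}) with the $p$-adic analysis of Proposition~\ref{prop:Wief_padic}, specialized to $p=3$. Since $3\equiv 3\bmod 4$, any hypothetical $\{3,q\}$-Diophantine quadruple must satisfy the exponent constraints of one of Cases~I--IV of Table~\ref{tab:p=3mod4}. The key observation is that for $p=3$ the $p$-adic Wieferich condition $v_3(q^{2}-1)<\max\{2,\log q/\log 3\}$ is essentially automatic: since $q^2-1=(q-1)(q+1)$ and $\gcd(q-1,q+1)=2$, at most one of the factors is divisible by $3$, and one checks directly that $v_3(q^2-1)$ is too small to be a genuine Wieferich obstruction. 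Thus the hypotheses of Proposition~\ref{prop:Wief_padic} are met with $p=3$, forcing $q<52038\log 3$, i.e. $q$ lies in a bounded range.

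\textbf{First} I would verify the $p$-adic condition explicitly. For $p=3$ one has $u_3=v_3(q^{3-1}-1)=v_3(q^2-1)$. Because $\gcd(q-1,q+1)=2$, the full power of $3$ dividing $q^2-1$ comes from whichever of $q\pm 1$ is a multiple of $3$, and in practice $v_3(q^2-1)$ stays below $\max\{2,\log q/\log 3\}$ for all but possibly very small $q$; the finitely many potential exceptions can be listed and dismissed by hand. Having established that $(3,q)$ is not an extreme Wieferich pair, Proposition~\ref{prop:Wief_padic} applies and yields the sharp bound
\begin{equation*}
  q<52038\log 3<57181.
\end{equation*}
\textbf{Then} I would invoke Corollary~\ref{cor:bound_23}, which already gives the absolute bound $B\leq 57170$ in the case $p=3$, so that both the prime $q$ and all exponents are confined to an effectively computable finite box.

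\textbf{The final step} is to dispose of the remaining finite range $3<q<57181$. Here I would appeal to the computational procedure of Szalay and Ziegler from \cite{Szalay:2015}, whose implementation is discussed in Section~\ref{Sec:Small}: for each prime $q$ in the admissible interval one enumerates the possible exponent patterns compatible with Table~\ref{tab:p=3mod4} and with the bounds on $A,B$, and checks directly that no genuine $\{3,q\}$-Diophantine quadruple arises. Since $q$ ranges over only finitely many primes and the exponents are bounded, this is a finite (if lengthy) verification.

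\textbf{The main obstacle} I anticipate is making the reduction to Proposition~\ref{prop:Wief_padic} airtight for the smallest values of $q$, where the bound $q<52038\log 3$ is vacuous and where the $p$-adic estimate $v_3(q^2-1)<\max\{2,\log q/\log 3\}$ might conceivably fail. These low cases are not covered by the asymptotic machinery and must be handled separately, either by the small-case algorithm of Section~\ref{Sec:Small} or by a direct ad hoc argument; this is precisely why the statement needs the computational input and cannot be deduced purely from the earlier analytic propositions.
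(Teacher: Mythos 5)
Your proposal is correct and its core reduction is exactly the paper's: verify the $p$-adic Wieferich condition for $p=3$, then apply Proposition~\ref{prop:Wief_padic} to force $q<52038\log 3<57170$. Two points of comparison are worth making. First, your verification of the Wieferich condition is hedged unnecessarily: since $\gcd(q-1,q+1)=2$ and $3$ is odd, $3^{u_3}$ divides exactly one of $q\pm 1$, and that factor is even, so $3^{u_3}\leq \frac{q+1}{2}<q$, i.e.\ $u_3<\frac{\log q}{\log 3}\leq \max\left\{2,\frac{\log q}{\log 3}\right\}$ for \emph{every} prime $q>3$ --- there are no small exceptions to list and dismiss, and this is precisely the one-line argument the paper gives. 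Second, and more substantively, you close the remaining range $3<q<57170$ with a fresh computer search via the algorithm of Section~\ref{Sec:Small}, whereas the paper simply cites Lemma~\ref{lem:max_pq} (a result of Szalay--Ziegler already stated in Section~\ref{Sec:aux}): any $\{p,q\}$-Diophantine quadruple forces $\max\{p,q\}>10^5$, and since $57170<10^5$ the two bounds overlap and nothing is left to check. Your computational finish is legitimate (the paper in fact runs exactly such a search later, for Theorem~\ref{th:gen}), but it is heavier than necessary here, and your closing assertion that the proposition ``needs the computational input and cannot be deduced purely from the earlier analytic propositions'' is not accurate within this paper: it follows purely from Proposition~\ref{prop:Wief_padic} together with the already-available Lemma~\ref{lem:max_pq}, with no new computation at all.
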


\begin{proof}
 We start by noting that $u_3=v_3(q^{\ord_3(q)}-1)\leq v_3(q^2-1)$. Since $q^2-1=(q-1)(q+1)$ and $3\nmid \gcd(q+1,q-1)=2$ we deduce that $3^{u_3}\leq \frac{q+1}2$,
 i.e. $u_3<\max\left\{2,\frac{\log q}{\log 3}\right\}$ and the $p$-adic Wieferich condition is fulfilled and we may apply Proposition \ref{prop:Wief_padic} and deduce that
 $52038 \log p<57170$, if $p=3$. Therefore Proposition \ref{prop:Wief_padic} implies that there is no $\{3,q\}$-Diophantine quadruple
 if $q$ is a prime $\geq 57170$. However, by Lemma \ref{lem:max_pq} we also know that no $\{p,q\}$-Diophantine quadruple exists, if $\max\{p,q\}<10^5$. Thus no $\{3,q\}$-Diophantine
 quadruple exists.
\end{proof}

Now we turn to the much more difficult proof that no $\{2,q\}$-Diophantine quadruple exists. First, let us note that due to \cite[Theorem 1.2]{Szalay:2015} we may assume that
$q\equiv 1 \mod 4$. However we start with an analog of Lemma \ref{lem:p-adic} for the case that $p=2$ and $q\equiv 1 \mod 4$.

\begin{lemma}\label{lem:2-adic}
 Let $q\equiv 1 \mod 4$ be a prime. Then we have
 $$v_2(q^x-1)= u_2+v_2(x)$$
 and
 $$v_2(q^x+1)=1$$
\end{lemma}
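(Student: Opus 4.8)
The plan is to derive both identities from the single observation that $q\equiv 1\mod 4$ forces $q^x\equiv 1\mod 4$ for every $x\geq 1$, which is precisely the feature distinguishing this case from a general odd $q$ and makes the clean formulas possible. With the notation introduced before Lemma~\ref{lem:p-adic}, we have $u_2=v_2(q^{2-1}-1)=v_2(q-1)$, and since $q\equiv 1\mod 4$ this gives $u_2\geq 2$.

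I would establish the second identity first, since it feeds directly into the first. From $q^x\equiv 1\mod 4$ we immediately get $q^x+1\equiv 2\mod 4$, and hence $v_2(q^x+1)=1$.

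For the first identity I would write $x=2^k t$ with $t$ odd and $k=v_2(x)$, and argue in two steps. First, for the odd part, factor
$$q^t-1=(q-1)\sum_{i=0}^{t-1}q^i;$$
the sum consists of $t$ odd summands, hence is odd, so $v_2(q^t-1)=v_2(q-1)=u_2$. Second, I would lift the exponent one power of $2$ at a time using $q^{2m}-1=(q^m-1)(q^m+1)$: by the second identity each factor $q^m+1$ contributes exactly $v_2(q^m+1)=1$, so passing from exponent $m$ to $2m$ raises the $2$-adic valuation by exactly one. Applying this $k$ times, starting from exponent $t$, yields $v_2(q^x-1)=u_2+k=u_2+v_2(x)$.

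There is no genuinely hard step here; the result is the $p=2$ analogue of the lifting-the-exponent computation cited from \cite{Cohen:NTI} in the proof of Lemma~\ref{lem:p-adic}. The only point requiring care is that for a general odd $q$ the doubling step would instead involve $v_2(q^m+1)$, which can exceed $1$ when $q\equiv 3\mod 4$; it is precisely the hypothesis $q\equiv 1\mod 4$ that pins this contribution to $1$ and makes the clean formula $u_2+v_2(x)$ valid.
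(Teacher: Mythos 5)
Your proof is correct, and it establishes the key steps by a different mechanism than the paper. Both arguments rest on the same skeleton --- decompose $x=2^kt$ with $t$ odd, show that the odd factor $t$ leaves the valuation unchanged and that each doubling raises it by exactly one --- but the paper proves these two facts by congruence computations: assuming $v_2(q^x-1)=\ell$, it writes $q^x\equiv 1+k\cdot 2^{\ell} \mod 2^{\ell+2}$ with $k$ odd, then expands $q^{2x}$ and $q^{ux}$ ($u$ odd), using $\ell\geq 2$ (i.e.\ the hypothesis $q\equiv 1\mod 4$) to discard the higher-order terms, and concludes with an induction in the style of \cite[Section 2.1.4]{Cohen:NTI}. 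You instead obtain the odd-exponent step from the factorization $q^t-1=(q-1)\sum_{i=0}^{t-1}q^i$, whose second factor is odd as a sum of $t$ odd terms, and the doubling step from $q^{2m}-1=(q^m-1)(q^m+1)$ combined with the lemma's own second identity $v_2(q^m+1)=1$. Making the first identity feed on the second is the nicest feature of your route: it is self-contained, avoids all congruence bookkeeping, and isolates exactly where $q\equiv 1\mod 4$ enters --- it pins $v_2(q^m+1)$ to $1$, which is precisely what fails for $q\equiv 3\mod 4$. What the paper's version buys in exchange is direct parallelism with the lifting-the-exponent argument for odd primes underlying Lemma~\ref{lem:p-adic}, of which it is the $2$-adic analogue; your factorization trick, by contrast, is special to the prime $2$.
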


\begin{proof}
The second statement is almost trivial after noting that for a prime $q\equiv 1\mod 4$ we have $q^x\equiv 1 \mod 4$.

 In order to prove the first statement of the lemma, assume that $v_2(q^x-1)=\ell$. Then 
 $$q^x\equiv 1+k \cdot 2^\ell \mod 2^{\ell+2}$$
 for some odd integer $k$ and we have
 $$q^{2x}\equiv 1+k\cdot 2^{\ell+1} \mod 2^{\ell+2},$$
 i.e. $v_2(q^{2x}-1)=\ell+1$. For some odd integer $u$ we obtain
 $$q^{ux}\equiv 1+uk\cdot 2^{\ell} \mod 2^{\ell+2},$$
 i.e. $v_2(q^{ux}-1)=\ell$.
 Note that since $q\equiv 1 \mod 4$ we have that $\ell\geq 2$. An induction argument similar as in the case that $p$ is odd can be applied (e.g. see \cite[Section 2.1.4]{Cohen:NTI}).
\end{proof}

In view of Lemma \ref{lem:2-adic} we obtain now the following variant of Lemma \ref{lem:p-adic-bound}:

\begin{lemma}\label{lem:2-adic-bound}
 Let $*,\dagger \in \{0,1,2,3,4,5,6\}$ and set $\beta_0=0$ and $\alpha_0=0$.
 Then we have that
 $$2^{v_2\left(q^{|\beta_*-\beta_\dagger|}-1\right)}< 2^{15} q<q^{1.51}$$
 and
 $$v_2\left(q^{|\beta_*-\beta_\dagger|}+1\right)=1.$$
 
 Moreover we have
 $$v_q\left(2^{|\alpha_*-\alpha_\dagger|}\pm 1\right)=0,u_q.$$
\end{lemma}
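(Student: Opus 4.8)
The plan is to prove Lemma~\ref{lem:2-adic-bound} in three parts, each resting on an already-established result.

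First I would bound the quantity $|\beta_*-\beta_\dagger|$ uniformly. By Corollary~\ref{cor:bound_AB}, since for $p=2$ we have $p\not\equiv 1\mod 4$, it follows that $B\log q\leq 52038\log 2\log q$, hence $B<52038\log 2<36070$; consequently $|\beta_*-\beta_\dagger|<36070<2^{16}$ (with the convention $\beta_0=0$). Now apply the first statement of Lemma~\ref{lem:2-adic}, which gives
$$v_2\bigl(q^{|\beta_*-\beta_\dagger|}-1\bigr)=u_2+v_2\bigl(|\beta_*-\beta_\dagger|\bigr).$$
Here $u_2=v_2(q^{\ord_2(q)}-1)$; but modulo $4$ we have $q\equiv 1$, so for odd $q$ the $2$-adic valuation $u_2=v_2(q-1)$ and since $q<\infty$ this contributes at most $v_2(q-1)$. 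The crucial observation is that $q-1<q$, so $2^{u_2}\leq q-1<q$, whence $2^{u_2}<q$. Combining this with $v_2(|\beta_*-\beta_\dagger|)\leq \log_2(36070)<16$, i.e. $2^{v_2(|\beta_*-\beta_\dagger|)}<2^{16}$, we would arrive at (adjusting the constant as the paper records) $2^{v_2(q^{|\beta_*-\beta_\dagger|}-1)}<2^{15}q$. Since $q>10^5$ by Lemma~\ref{lem:max_pq}, one checks $2^{15}q<q\cdot q^{0.51}=q^{1.51}$ because $2^{15}=32768<10^{5\cdot0.51}=10^{2.55}$ is false numerically, so the exponent bound $q^{1.51}$ must instead follow from $2^{15}<q^{0.51}$ using $q\geq 700393$ or a sharper accounting of $u_2$; this calibration of constants is the one delicate point and would require pinning down the precise bound on $B$ and on $u_2$.

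Second, the statement $v_2(q^{|\beta_*-\beta_\dagger|}+1)=1$ is immediate from the second statement of Lemma~\ref{lem:2-adic}: for $q\equiv 1\mod 4$ we have $q^x\equiv 1\mod 4$, so $q^x+1\equiv 2\mod 4$, giving exactly valuation $1$. No size estimate is needed here.

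Third, for the $q$-adic valuation $v_q(2^{|\alpha_*-\alpha_\dagger|}\pm 1)$ I would invoke Lemma~\ref{lem:p-adic} with the roles of the primes as stated there (it allows $p<q$ or not, and with $p=2$ one reads $v_q(2^x-1)$ and, when $2\mid\ord_q(2)$, $v_q(2^x+1)$). That lemma gives $v_q(2^x-1)\leq u_q+\tfrac{x}{\log q}$, and analogously for the $+1$ case, while $v_q(2^x+1)=0$ when $\ord_q(2)$ is odd. The claim to be extracted is that the valuation takes only the values $0$ or $u_q$; this follows from Lemma~\ref{lem:p-adic} together with the elementary structure of $\Z/q\Z$: the valuation is positive exactly when the relevant order condition holds, and in that case Lemma~\ref{lem:p-adic}'s proof (via $v_q(s^n-1)=v_q(s-1)+v_q(n)$ with $s=2^{q-1}$) forces the valuation to equal $u_q=v_q(2^{q-1}-1)$ unless $q\mid |\alpha_*-\alpha_\dagger|$, which cannot occur since $|\alpha_*-\alpha_\dagger|<A<52038\log q/\log 2<q$ for large $q$. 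Thus the valuation is either $0$ (order condition fails) or $u_q$ (it holds and $q\nmid$ the exponent).

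The main obstacle I expect is the first part: correctly tracking the constants so that the bound $2^{v_2(q^{|\beta_*-\beta_\dagger|}-1)}<2^{15}q<q^{1.51}$ holds. One must combine the explicit bound $B<36070$ from Corollary~\ref{cor:bound_AB}, the bound $2^{u_2}<q$, and the logarithmic bound on $v_2$ of the exponent, and then verify numerically that the product stays below $q^{1.51}$ using $q>10^5$. The other two parts are essentially immediate consequences of Lemmas~\ref{lem:2-adic} and~\ref{lem:p-adic}.
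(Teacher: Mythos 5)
Your overall route is the paper's route (Lemma~\ref{lem:2-adic} for the $2$-adic statements, Lemma~\ref{lem:p-adic} for the $q$-adic one, plus the exponent bounds from Section~\ref{Sec:UB}), but there is a genuine gap, and it is exactly at the point you flagged and then left unresolved: the inequality $2^{15}q<q^{1.51}$, i.e. $2^{15}<q^{0.51}$. None of your proposed fixes can work. With $q>10^5$ one only gets $q^{0.51}\approx 355$, and even with $q\geq 700393$ one gets $q^{0.51}\approx 957$, both far below $2^{15}=32768$. Nor can a ``sharper accounting of $u_2$'' or of $B$ save the argument: $u_2=v_2(q-1)$ really can be as large as roughly $\log q/\log 2$ (think of $q$ with $q-1$ a power of $2$), and the bound $B<36070$ is what Corollary~\ref{cor:bound_23} gives. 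The missing idea is that for $p=2$ one may assume $q>10^9$ and $q\equiv 1 \mod 4$, by the earlier results of Szalay and Ziegler (\cite[Theorems 1.2 and 1.3]{Szalay:2015}, quoted in the Introduction): any $\{2,q\}$-Diophantine quadruple forces $q>10^9$. This is what the paper's proof invokes, and with $q>10^9$ one has $q^{0.51}>10^{4.59}>3.8\cdot 10^4>2^{15}$, so $2^{v_2(q^{|\beta_*-\beta_\dagger|}-1)}\leq 2^{15}(q-1)<2^{15}q<q^{1.51}$ goes through.

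The same omission silently affects your third part. There you need $v_q(|\alpha_*-\alpha_\dagger|)=0$, equivalently $|\alpha_*-\alpha_\dagger|<q$, and you justify it ``for large $q$''; the paper makes this precise via $|\alpha_*-\alpha_\dagger|\leq 52038\log q<q$, which again holds because $q>10^9$ (it would in fact fail for $q=700393$, since $52038\log(700393)>700393$). Your second part ($v_2(q^x+1)=1$ from $q\equiv 1\mod 4$) is correct and identical to the paper's; note, however, that the hypothesis $q\equiv 1\mod 4$ used there is itself not free but comes from the same citation of \cite{Szalay:2015}. So the structure of your proof is right, but without importing the lower bound $q>10^9$ from prior work the first and third statements cannot be established by the constants you have available.
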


\begin{proof}
Note that due to the results of Szalay and the author \cite{Szalay:2015} we may assume that $q\equiv 1 \mod 4$ and $q>10^9$. The statement that
$v_2\left(q^{|\beta_*-\beta_\dagger|}+1\right)=1$ is a direct consequence of Lemma~\ref{lem:2-adic}.

Since $u_2=v_2(q-1)$ we deduce that $u_2<\frac{\log q}{\log 2}$, hence Lemma \ref{lem:2-adic} implies
$$\exp\left(\log 2 \cdot v_2\left(q^{|\beta_*-\beta_\dagger|}-1\right)\right)<\exp(\log q+\log 2 \cdot v_2(|\beta_*-\beta_\dagger|))=q\cdot 2^{v_2(|\beta_*-\beta_\dagger|)}.$$
Since Corollary \ref{cor:bound_23} we know that $|\beta_*-\beta_\dagger|<36070$. Therefore we have that $v_2(|\beta_*-\beta_\dagger|)\leq 15$ and we obtain the first
statement of the lemma, if we take into account that we may assume that $q>10^9$.

The last statement of the lemma is easily deduced from Lemma \ref{lem:p-adic}. Indeed we have
$$v_q\left(2^{|\alpha_*-\alpha_\dagger|}\pm 1\right)\leq u_q+\frac{\log |\alpha_*-\alpha_\dagger|}{\log q}<u_q+\frac{\log (52038 \log q)}{\log q}<u_q+1$$
since $q>52038 \log q$, which holds for $q>10^9$. The last statement of the lemma follows now from the observation that $v_q(2^x\pm 1)\geq u_q$ if $v_q(2^x\pm 1)\neq 0$.
\end{proof}

Now, we consider the four cases of Proposition \ref{prop:p=2} individually and show that the assumption that $q>10^9$ yields a contradiction in each case.
The problem is that an analogous statement of Lemma \ref{lem:alpha_beta_zero} does not hold for $p=2$
and we have to be careful when we repeat the proof of Proposition \ref{prop:Wief_padic} in the case that $p=2$.

\subsection{Case I} 

In this case we have 
\begin{align*}
 ab+1=&2q^{\beta}, &  bc+1=&2^{\alpha}q^{\beta_4}, \\
 ac+1=&2^{\alpha_2}q^{\beta}, &  bd+1=&2^{\alpha}q^{\beta_5}, \\
 ad+1=&2^{\alpha_3}q^{\beta}, &  cd+1=&2q^{\beta_6},
\end{align*}
with $1<\alpha<\alpha_2<\alpha_5$ and $\beta<\beta_4<\beta_5<\beta_6$. We consider the second equation of system \eqref{eq:SUnit} and obtain after dividing through the common denominator $2^\alpha q^\beta$ the
equation
\begin{equation}\label{eq:CaseI-p=2}
 q^{\beta_4-\beta}\left(q^{\beta_5-\beta}-1\right)=2^{\alpha_2}q^{\beta_5}-2^{\alpha_3}q^{\beta_4}+2^{\alpha_3-\alpha}-2^{\alpha_2-\alpha}.
\end{equation}
That is $2^{\alpha_2-\alpha}|q^{\beta_5-\beta}-1$ and Lemma \ref{lem:2-adic-bound} implies that $2^{\alpha_2-\alpha}<q^{1.51}$.%

Similarly as in the case that $p$ is odd, we observe that due to Lemma \ref{lem:s4_low_bound} we have 
$$2q^\beta \cdot 2^{\alpha}q^\beta=\gcd(s_1,s_4)\gcd(s_2,s_4)<s_4=2^{\alpha}q^{\beta_4},$$
i.e. $2\beta< \beta_4$. Moreover, $(ab+1)(ac+1)>bc+1$ implies that
$$2^{\alpha_2+1}q^{2\beta}>2^\alpha q^{\beta_4},$$
hence $2\beta+2>\beta_4$. Thus $\beta_4=2\beta+1$.

Next we compute similarly as in the case that $p$ is odd the quantity $a^2$ and we obtain:
\begin{align*}
 a^2 =& \frac{(s_1-1)(s_2-1)}{s_4-1}\\
 =& \frac{2^{\alpha_2+1}q^{2\beta}-2^{\alpha_2}q^{\beta}-2q^\beta+1}{2^\alpha q^{2\beta+1}-1}\\
 =& \frac{2^{\alpha_2-\alpha+1}}{q}+L\left(1.001\cdot \left(\frac{2^{\alpha_2}}{2^{2\alpha}q^{2\beta+2}}+\frac{2^{\alpha_2-\alpha}}{q^{\beta+1}}
+\frac{2}{2^\alpha q^{\beta+1}}+\frac{1}{2^\alpha q^{2\beta+1}}\right)\right)\\
 =& \frac{2^{\alpha_2-\alpha+1}}{q}+L\left(1.001\cdot \left(\frac{1}{2^{\alpha}q^{2\beta}}+\frac{2}{q^{\beta-1}}
 +\frac{1}{2^\alpha q^{\beta+1}}+\frac{1}{2^\alpha q^{2\beta+1}}\right)\right)
\end{align*}
If we assume that $\beta\geq 3$ we obtain
$$a^2=\frac{2^{\alpha_2-\alpha+1}}{q}+L\left(\frac{4.004}{q^2}\right)$$
and similarly as in Case I of the proof of Proposition \ref{prop:Wief_padic} we obtain a contradiction. Thus we may assume that $\beta\leq 2$.

The case that $\beta=0$ can be excluded since otherwise we would have that $s_1=ab+1=2$ and $a=b=1$ which is excluded. Assume that $\beta=1$, then a computation of $a^2$ as before shows that
$$a^2=\frac{2^{\alpha_2-\alpha+1}}{q}-\frac{2^{\alpha_2-\alpha}}{q^2}+L\left(1.001 \cdot \left(\frac{1}{2^{\alpha}q^{2}}+\frac{1}{2^{\alpha} q^{3}}
 +\frac{1}{2^{\alpha-1} q^{2}}+\frac{1}{2^\alpha q^{3}} \right)\right)
 $$
and since $2^\alpha\geq 4$ we conclude that 
$$a^2=\frac{2^{\alpha_2-\alpha+1}}{q}-\frac{2^{\alpha_2-\alpha}}{q^2}+L\left(\frac{0.8}{q^2}\right)$$
is an integer. This implies that
$$q^2\left| q2^{\alpha_2+1}-2^{\alpha_2}\right.$$
which is impossible.

By similar means we can show that in the case that $\beta=2$ no $\{2,q\}$-Diophantine quadruple exists.

\subsection{Case II}

In the second case we have 
\begin{align*}
 ab+1=&2q^{\beta'}, &  bc+1=&2^{\alpha_4}q^{\beta}, \\
 ac+1=&2^{\alpha}q^{\beta'}, &  bd+1=&2^{\alpha}q^{\beta_5}, \\
 ad+1=&2^{\alpha_3}q^{\beta}, &  cd+1=&2q^{\beta_6},
\end{align*}
with $0<\alpha<\alpha_3,\alpha_4$ and $\beta<\beta'<\beta_5<\beta_6$. We consider the second equation of system \eqref{eq:SUnit} and obtain 
\begin{equation}\label{eq:CaseII-p=2}
 q^{\beta'-\beta}\left(q^{\beta_5-\beta'}+1\right)=2^{\alpha}q^{\beta_5+\beta'-\beta}+2^{\alpha_3-\alpha}+2^{\alpha_3+\alpha_4-\alpha}q^\beta.
\end{equation}
Let $M=\min\{\alpha,\alpha_3-\alpha,\alpha_4-\alpha\}$, then $2^{M}|q^{\beta_5-\beta}+1$ and Lemma \ref{lem:2-adic-bound} implies that $M=1$.
In case that $\alpha=1$, we would obtain that $s_1=s_2$ a contradiction. If $\alpha_3=\alpha+1$ we obtain $2^\alpha q^{\beta'}=s_2<s_3=2^{\alpha+1}q^\beta$,
i.e. $q^{\beta'-\beta}<2$ an obvious contradiction. Finally in the case that $\alpha_4=\alpha+1$ we obtain a contradiction by considering the inequality
$s_2<s_4$.

\subsection{Case III}

In the third case we have:
\begin{align*}
 ab+1=&2^\alpha q^{\beta}, &  bc+1=&2^{\alpha_4}q^{\beta'}, \\
 ac+1=&2q^{\beta_2}, &  bd+1=&2q^{\beta_5}, \\
 ad+1=&2^{\alpha}q^{\beta'}, &  cd+1=&2^{\alpha_6}q^{\beta},
\end{align*}
with $1<\alpha\leq\alpha_4<\alpha_6$ and $\beta<\beta'<\beta_2<\beta_5$.
We consider the last equation of system \eqref{eq:SUnit} and we obtain that $2^{\alpha_4-\alpha}|q^{\beta'-\beta}-1$. Thus Lemma \ref{lem:2-adic-bound} implies that $2^{\alpha_4-\alpha}<2^{15}q$.
We apply Lemma \ref{lem:s4_low_bound} and obtain
$$q^{\beta}2^\alpha \cdot 2q^{\beta'}=\gcd(s_4,s_1)\gcd(s_4,s_2)<s_4=q^{\beta'}2^{\alpha_4}, $$
and therefore $q^\beta<2^{\alpha_4-\alpha-1}<2^{14}q$, which implies that $\beta=0,1$.

The case that $\beta=1$ is similar to the treatment of Case III, if $p$ is odd. However we may even assume that $2^{\alpha_4-\alpha}>2q$ and the inequality $s_2<s_3$ yields that $2^\alpha>2q$, that
is we have that $2^{\alpha_4}>4q^2$. Now, let us consider the second equation of system \eqref{eq:SUnit}. After rewriting the equation we obtain
$$2^{\alpha}(2^{\alpha_4-\alpha}+1)=q^{\beta'}2^{\alpha_4+\alpha}-4q^{\beta_2+\beta_4}+2q^{\beta_2}+2q^{\beta_5}.$$
Since $2^{\alpha_4-\alpha-1}+1<q^2$ we deduce that either $\beta'=1$ or $\beta_2=\beta'+1$. Note that $\beta=\beta'=1$ is excluded, hence we deduce that $\beta_2=\beta'+1$. Next, we compute  
\begin{align*}
 a^2=&\frac{(s_1-1)(s_2-1)}{s_4-1}\\
 =& \frac{2^{\alpha+1}q^{2\beta'+1}-2q^{\beta'+1}-2^{\alpha}q+1}{2^{\alpha_4}q^{\beta'}-1}\\
 =&  \frac{q^2}{2^{\alpha_4-\alpha-1}}+L\left(1.001 \cdot \left(\frac{q^2}{2^{2\alpha_4-\alpha-1}q^{\beta'}}+ \frac{q}{2^{\alpha_4-1}}+\frac{q}{2^{\alpha_4-\alpha}q^{\beta'}}
 +\frac{1}{2^{\alpha_4} q^{\beta'}}\right)\right)\\ 
 =&  \frac{q^2}{2^{\alpha_4-\alpha-1}}+L\left(\frac{0.51}{2^{\alpha_4-\alpha-1}}\right)
\end{align*}
which implies that $\frac{q^2}{2^{\alpha_4-\alpha-1}}$ is an integer, an obvious contradiction.

Therefore we have $\beta=0$ and by considering again the second equation of system \eqref{eq:SUnit} we obtain that $\beta'=1$. Note that the case that $\beta_2=\beta'+1$
has been excluded by the previous paragraph, i.e. we have that $\beta_2\geq 3$. Due to the inequality $2q^{\beta_2}=s_2<s_3=2^{\alpha}q$ we deduce that $2^{\alpha}>2q^{\beta_2-1}\geq 2q^2$. Next, we observe that 
$$\frac db=\frac{s_3-1}{s_1-1}=\frac{2^\alpha q-1}{2^\alpha-1}<1.001 q$$
and
$$\frac db=\frac{s_6-1}{s_4-1}=\frac{2^{\alpha_6}-1}{q2^{\alpha_4}-1}>\frac{2^{\alpha_6-\alpha_4}}q$$
and therefore $2^{\alpha_6-\alpha_4}<1.001q^2$. On the other hand we have that $c|\frac{s_6-s_4}{\gcd(s_6,s_4)}$, hence $c<2^{\alpha_6-\alpha_4}<1.001q^2$. This yields $2q^{\beta_2}=ac+1<c^2<2q^4$ and
we obtain that $\beta_2\leq 3$. Since we have excluded the case that $\beta_2< 3$ we are left with the possibility that $\beta_2=3$. Let us compute
$$ a^2=\frac{(s_1-1)(s_2-1)}{s_4-1}<\frac{s_1s_2}{s_4}=2^{\alpha+1-\alpha_4}q^2$$
and therefore we have $a<q\sqrt{2}$, since $\alpha_4\geq \alpha$. But $a<1.5q$ and $c<1.001q^2$ yield $2q^3=ac+1<2q^3$ a contradiction.

\subsection{Case IV}

In the last case we have:
\begin{align*}
 ab+1=&2^\alpha q^{\beta}, &  bc+1=&2q^{\beta_4}, \\
 ac+1=&2^{\alpha}q^{\beta'}, &  bd+1=&2^{\alpha_5}q^{\beta'}, \\
 ad+1=&2q^{\beta_3}, &  cd+1=&2^{\alpha_6}q^{\beta},
\end{align*} 
with $\alpha<\alpha_5<\alpha_6$ and $\beta<\beta'< \beta_3,\beta_4$.
In this case we consider the first equation of system \eqref{eq:SUnit} and obtain in combination with Lemma~\ref{lem:2-adic-bound} that $2^{\alpha_5-\alpha}<2^{15}q$. Applying Lemma \ref{lem:s4_low_bound}
we obtain
$$2^\alpha q^\beta \cdot 2 q^{\beta'}=\gcd(s_1,s_5)\gcd(s_3,s_5)<s_5=q^{\beta'}2^{\alpha_5},$$
hence $q^\beta<2^{\alpha_5-\alpha-1}<q^2$ and therefore $\beta=0,1$.

In the case that $\beta=1$ we proceed similarly as in the case that $p$ is odd. That is we consider the second equation of system \eqref{eq:SUnit}. 
After dividing through a common denominator and rearranging terms we obtain
\begin{equation}\label{eq:CaseIV-p=2}
2^\alpha(2^{\alpha_5-\alpha}+1)=q^{\beta'}p^{\alpha_5}-4q^{\beta_3+\beta_4-\beta'}+2q^{\beta_3-\beta'}+2q^{\beta_4-\beta'}.
\end{equation}
Since $2^{\alpha_5-\alpha}+1<q^2$ the $q$-adic valuation of the right hand side is at most $1$. Therefore we have either $\beta'=1$ or $\beta_4-\beta'=1$ or $\beta_3-\beta'=1$. Since $\beta=\beta'=1$
is a contradiction the first case cannot hold and the second case cannot hold due to an application of Lemma \ref{lem:s4_low_bound}. Indeed we obtain
$$2q^\beta \cdot 2q^{\beta'}=\gcd(s_1,s_4)\gcd(s_2,s_4)<s_4=2q^{\beta'+1}$$
which yields a contradiction since we assume that $\beta=1$.

Therefore we have $\beta_3=\beta'+1$ and $\beta_4\geq \beta'+2$. But $\beta_3=\beta'+1$ yields together with $s_2<s_3$ that $2^\alpha<2q$ and therefore $2^{\alpha_5}<2^{16}q^2<q^3$ and due to $s_4<s_5$ we
deduce that $\beta_4=\beta'+2$. Let us compute
\begin{align*}
 a^2=&\frac{(s_1-1)(s_2-1)}{s_4-1}\\
 =& \frac{2^{2\alpha}q^{1+\beta'}-2^{\alpha}q^{\beta'}-2^{\alpha}q+1}{2q^{\beta'+2}-1}\\
 =&  \left(2^{2\alpha}q^{1+\beta'}-2^{\alpha}q^{\beta'}\right)\left(\frac{1}{2q^{\beta'+2}}+\frac{1}{2q^{\beta'+2}(2q^{\beta'+2}-1)}\right)-
 (2^\alpha q-1)\frac{1}{2q^{\beta'+2}-1}\\
 =& \frac{2^{\alpha-1}(2^{\alpha}q-1)}{q^2}+\stackrel{X:=}{\overbrace{ \frac{2^\alpha(2^\alpha q-1)}{2q^2(2q^{\beta'+2}-1)}}}
 -\stackrel{Y:=}{\overbrace{ (2^\alpha q-1)\frac{1}{2q^{\beta'+2}-1}}}
\end{align*}
Since $2^\alpha<2q$ it is easy to see that $0<X,Y<\frac{1}{q^2}$ and therefore $|X-Y|<\frac{1}{q^2}$. Hence $a^2=\frac{2^{\alpha-1}(2^{\alpha}q-1)}{q^2}$ is an 
integer, which contradicts the fact that $q^2\nmid 2^{\alpha}(2^{\alpha}q-1)$.

Therefore we are left with the case that $\beta=0$. Again we consider \eqref{eq:CaseIV-p=2} and see that $q^M|2^{\alpha_5-\alpha}+1$, where $M=\min\{\beta',\beta_3-\beta',\beta_4-\beta'\}$.
Since $2^{\alpha_5-\alpha}+1<2^{15}q+1<q^2$ we obtain that $M\leq 1$.
Note that in case that $2^{\alpha_5-\alpha}<q-1$ we have $M=0$ which would yield an immediate contradiction. Therefore we have $M=1$ and $2^{\alpha_5-\alpha}\geq q-1$. However, $M=1$ implies
that either $\beta'=1$ or $\beta_4-\beta'=1$ or $\beta_3-\beta'=1$. The last two options have been excluded in our previous discussion, hence $\beta'=1$. Next we compute
$$ \frac da=\frac{s_5-1}{s_1-1}=\frac{2^{\alpha_5}q-1}{2^\alpha-1}<1.001 \cdot 2^{\alpha_5-\alpha}q<1.001 \cdot 2^{15} q^2$$
and
$$\frac da=\frac{s_6-1}{s_2-1}=\frac{2^{\alpha_6}-1}{2^\alpha q-1}>\frac{2^{\alpha_6-\alpha}}{q}.$$
Therefore we obtain that $2^{\alpha_6-\alpha}<1.001 \cdot 2^{15} q^3$. But since $2^{\alpha_5-\alpha}\geq q-1$ we obtain that $2^{\alpha_6-\alpha_5}<1.002 \cdot 2^{15} q^2$.
On the other hand we have $d|\frac{s_6-s_5}{\gcd(s_5,s_6)}$ and we obtain that $d<2^{\alpha_6-\alpha_5}<1.002 \cdot 2^{15} q^2$.

Next we estimate
\begin{equation}\label{eq:csquare}
c^2=\frac{(s_4-1)(s_2-1)}{s_1-1}>0.999 \frac{s_4s_2}{s_1}=1.998 q^{\beta_4+1}
\end{equation}
and we obtain 
$$1.01 \cdot 2^{30} q^4>d^2>c^2>1.998 q^{\beta_4+1}.$$
hence $1.02\cdot 2^{29} q^3>q^{\beta_4}$ and since we may assume that $q>10^9>1.02\cdot 2^{29}$ due to \cite[Theorem~1.3]{Szalay:2015} we deduce that $\beta_4\leq 3$. On the other
hand we have already excluded that $\beta_4<3$ and obtain that $\beta_4=3$. Moreover, \eqref{eq:csquare} yields $c>1.4 q^2$. On the other hand we have that
$c|\frac{s_4-s_2}{\gcd(s_4,s_2)}$, hence we get $c<q^2$ and we have a contradiction.

Since we excluded the existence of $\{2,q\}$-Diophantine quadruples for all four cases the proof that no $\{2,q\}$-Diophantine quadruple exists is complete.

\section{The $q$-adic Wieferich condition}\label{Sec:Wief_qadic}

This section is the $q$-adic analog of Section \ref{Sec:Wief_padic}. Thus we assume the divisibility condition $q^2 \nmid p^{q-1}-1$, i.e. $v_q(p^{q-1}-1)=1$. 
The main result of this section is the following proposition:

\begin{proposition}\label{prop:Wief_qadic}
Let $p<q$ be primes such that $p\equiv 3 \mod 4$ and $q^2 \nmid p^{q-1}-1$, i.e. $u_q=1$. If there exists a $\{p,q\}$-Diophantine quadruple, then $q\leq 700393$.
\end{proposition}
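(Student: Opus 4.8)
The plan is to set up the $q$-adic analogue of Section~\ref{Sec:Wief_padic}. Since $p\equiv 3\bmod 4$, Proposition~\ref{prop:p=3mod4} applies and places the exponents into one of the four configurations of Table~\ref{tab:p=3mod4}, exactly as in the $p$-adic case. I would assume for contradiction that a $\{p,q\}$-Diophantine quadruple exists with $q>700393$. The purpose of this threshold is that, by Corollary~\ref{cor:bound_AB} (which applies because $p\equiv 3\bmod 4$), one has $A\le 52038\log q<q$, so that $0<|\alpha_*-\alpha_\dagger|\le A<q$ whenever $\alpha_*\ne\alpha_\dagger$; the conclusion $q\le 700393$ is then precisely the negation of this working hypothesis.

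The first step is a $q$-adic replacement for Lemma~\ref{lem:p-adic-bound}: under the above assumptions, whenever $\alpha_*\ne\alpha_\dagger$ one has $v_q\!\left(p^{|\alpha_*-\alpha_\dagger|}\pm 1\right)\le u_q=1$. This is immediate from Lemma~\ref{lem:p-adic}, because $0<|\alpha_*-\alpha_\dagger|<q$ forces $v_q(|\alpha_*-\alpha_\dagger|)=0$, whence $v_q(p^{x}-1)\le u_q+v_q(x)=1$ and likewise $v_q(p^{x}+1)\le 1$ for $x=|\alpha_*-\alpha_\dagger|$. In words, the $q$-adic Wieferich condition makes the $q$-part of any difference or sum of two of our $p$-powers as small as it can possibly be, namely at most $q^1$; this is the sharp analogue of the bound $p^{v_p(\cdots)}<q^2$ that drove the $p$-adic case.

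The second step is to run through the four cases of Table~\ref{tab:p=3mod4}, reading the valuations the other way round. In each case I would write out the six $S$-units, select the appropriate equation of \eqref{eq:SUnit}, divide through by the common $p$- and $q$-factor, and take $q$-adic valuations. One side then carries a factor $q^{(\text{difference of }\beta\text{-exponents})}$ times a $q$-unit, while the other side reduces modulo higher powers of $q$ to a sum or difference of two $p$-powers, whose $q$-valuation is $\le 1$ by the lemma just stated; this forces the relevant $\beta$-gap to be at most $1$. Combining such a tight relation with the lower bounds of Lemma~\ref{lem:s4_low_bound} (and, where needed, Lemma~\ref{lem:abcd-div}) yields a contradiction. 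For example, in Case~I the second equation of \eqref{eq:SUnit}, after dividing by $p^{\alpha}q^{\beta}$, gives $q^{\beta_4-\beta}(q^{\beta_5-\beta_4}-1)$ on one side and a $p$-part $p^{\alpha_2-\alpha}(p^{\alpha_3-\alpha_2}-1)$ on the other, so $\beta_4-\beta=v_q(p^{\alpha_3-\alpha_2}-1)\le 1$, i.e.\ $\beta_4=\beta+1$; but Lemma~\ref{lem:s4_low_bound} gives $2\beta<\beta_4$, whence $\beta<1$, contradicting $\beta\ge 1$ (as $s_1=q^{\beta}=ab+1>1$).

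The main obstacle will be the borderline subcases with very small $q$-exponents (typically $\beta=1$), which I expect to arise in Cases~II, III and IV. There the bound $v_q(\cdots)\le 1$ is not comfortably below the exponents occurring, so the valuation argument alone cannot separate the leading term from the remainder; as in the proof of Proposition~\ref{prop:Wief_padic} I would then fall back on expanding a quantity such as $a^2=\frac{(s_1-1)(s_2-1)}{s_4-1}$ (and occasionally $c^2$ or $d^2$) with the $L$-notation of Lemma~\ref{lem:L-geometric} to enough precision to conclude that a manifestly non-integral rational number is forced to be an integer. A secondary subtlety absent from the $p$-adic setting is that the sign in $p^{x}\pm 1$ now matters, through the parity of $\ord_q(p)$; but since both $p^{x}-1$ and $p^{x}+1$ obey the same bound, this affects only the bookkeeping. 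Carrying this out in all four cases excludes every configuration of Table~\ref{tab:p=3mod4} under $q>700393$, which proves the proposition.
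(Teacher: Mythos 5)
Your proposal is correct and follows essentially the same route as the paper: the same key $q$-adic lemma (obtained from Lemma~\ref{lem:p-adic} together with the bound $|\alpha_*-\alpha_\dagger|\le A\le 52038\log q<q$, which is exactly where the threshold $700393$ enters), followed by the same elimination of the four configurations of Table~\ref{tab:p=3mod4} via $q$-adic valuations of the equations \eqref{eq:SUnit}, Lemma~\ref{lem:s4_low_bound}, Lemma~\ref{lem:abcd-div}, and $L$-notation expansions of $a^2$ in the borderline subcases. Your worked Case~I coincides with the paper's argument (including the contradiction $2\beta<\beta_4=\beta+1$), and the fallback you describe for the remaining cases --- forcing a manifestly non-integral rational such as $a^2$ to be an integer --- is precisely what the paper does in Cases~III and~IV (Case~II needs only Lemma~\ref{lem:s4_low_bound} together with Lemma~\ref{lem:alpha_beta_zero} to exclude $\beta=0$).
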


Similar as in the proof of Proposition \ref{prop:Wief_padic} we need a tool to estimate $q$-adic valuations. That is we prove the following $q$-adic variation of Lemma \ref{lem:p-adic-bound}:

\begin{lemma}\label{lem:q-adic-bound}
 Let $p<q$ be odd primes such that $p\equiv 3 \mod 4$ and let $*,\dagger \in \{0,1,2,3,4,5,6\}$. Moreover, set $\alpha_0=0$ and assume that $q>700393$. If $v_q(p^{q-1}-1)=u_q$, then we have that
 $v_q\left(p^{|\alpha_*-\alpha_\dagger|}\pm 1\right)=0,u_q$.  
\end{lemma}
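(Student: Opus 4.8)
The plan is to run the exact $q$-adic analogue of the argument for Lemma~\ref{lem:p-adic-bound}, exploiting the cyclic structure of $(\Z/q\Z)^\times$ precisely as in Lemma~\ref{lem:p-adic}. Write $x=|\alpha_*-\alpha_\dagger|$ and $e=\ord_q(p)$. First I would bound $x$: since $p\equiv 3\mod 4$, the primes $p<q$ are not both $\equiv 1\mod 4$, so Corollary~\ref{cor:bound_AB} applies and yields $A\le 52038\log q$; as $\alpha_0=0$ and every $\alpha_i$ lies in $[0,A]$, this gives $x\le A\le 52038\log q$. In every application of the lemma the two exponents genuinely differ, so I may assume $x\ge 1$.

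The whole statement hinges on showing $q\nmid x$, equivalently $v_q(x)=0$, since (as the computation below shows) $v_q(p^x\pm1)$ equals $u_q+v_q(x)$ whenever it is nonzero. To get $q\nmid x$ I would simply compare the bound $x\le 52038\log q$ with $q$ itself: once $q$ lies beyond the large solution of $q=52038\log q$ one has $52038\log q<q$, and this is exactly the numerical content encoded by the hypothesis $q>700393$. Hence $1\le x<q$, so $q\nmid x$ and $v_q(x)=0$. I expect this comparison—pinning down the precise threshold so that the single large exponent cannot be a multiple of $q$—to be the decisive (if elementary) step, and it is the reason the constant $700393$ appears.

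With $v_q(x)=0$ secured, the valuation bookkeeping is routine. Writing $q-1=ek$ with $0<k<q$ and using the sharp identity $v_q(s^n-1)=v_q(s-1)+v_q(n)$ for $s\equiv 1\mod q$ from the proof of Lemma~\ref{lem:p-adic}, I first record that $u_q=v_q(p^{q-1}-1)=v_q(p^e-1)+v_q(k)=v_q(p^e-1)$. For the minus sign: if $e\nmid x$ then $p^x\not\equiv 1\mod q$ and $v_q(p^x-1)=0$, while if $e\mid x$ then $v_q(p^x-1)=v_q(p^e-1)+v_q(x/e)=u_q$, since $0<x/e\le x<q$ forces $v_q(x/e)=0$. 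For the plus sign I would split on the parity of $e$: if $2\nmid e$ then $v_q(p^x+1)=0$ by Lemma~\ref{lem:p-adic}; if $2\mid e$ but $q\nmid p^x+1$ the valuation is again $0$; and if $q\mid p^x+1$, then $p^x\equiv-1\mod q$ gives $p^x-1\equiv-2\not\equiv0\mod q$, so factoring $p^{2x}-1=(p^x-1)(p^x+1)$ forces $v_q(p^x+1)=v_q(p^{2x}-1)=u_q+v_q(2x)=u_q$, the last equality using $v_q(2x)=v_q(x)=0$ as $q$ is odd. In every case $v_q(p^x\pm1)\in\{0,u_q\}$, as claimed. The only point requiring care is this even-order subcase of the plus sign, which I would handle exactly through the factorization above.
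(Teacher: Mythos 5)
Your proposal is correct and follows essentially the same route as the paper: bound $x=|\alpha_*-\alpha_\dagger|\le A<52038\log q<q$ (which is exactly where the hypothesis $q>700393$ enters), deduce $v_q(x)=0$, and then use the lifting-the-exponent identity underlying Lemma~\ref{lem:p-adic} together with the observation that any nonzero valuation is at least $u_q$. The only differences are cosmetic: you unwind Lemma~\ref{lem:p-adic} by hand (treating the plus sign via the factorization $p^{2x}-1=(p^x-1)(p^x+1)$) where the paper simply invokes that lemma, and you cite Corollary~\ref{cor:bound_AB} where the paper loosely cites Proposition~\ref{prop:bound_AB}, which is in fact the more accurate reference since $p\equiv 3\bmod 4$ is needed to exclude Cases I and II.
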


\begin{proof}
 The lemma is a consequence of Lemma \ref{lem:p-adic} and Proposition \ref{prop:bound_AB}. Indeed we have 
 $$v_q\left(p^{|\alpha_*-\alpha_\dagger|}-1\right)<u_q+ v_q(|\alpha_*-\alpha_\dagger|)<u_q+\frac{\log (52038 \log q)}{\log q}<u_q+1,$$
 which implies the statement of the lemma provided that $\frac{\log (52038 \log q)}{\log q}<1$. Note that if $v_q(p^x-1)\neq 0$, then $v_q(p^x-1)\geq u_q$.
 
 To obtain the lemma we observe that the inequality $52038 \log q<q$ holds, if $q$ is a prime larger than $700393$. Indeed the inequality $52038 \log x<x$ holds if $x>700401$ and $700393$ is the largest prime
 less than $700401$.
\end{proof}

The proof of Proposition \ref{prop:Wief_qadic} is similar to the proof of Proposition \ref{prop:Wief_padic}. However, we have to discuss all four cases of Proposition \ref{prop:p=3mod4}.

\subsection{Case I}

In this case we have 
\begin{align*}
 ab+1=&q^{\beta}, &  bc+1=&p^{\alpha}q^{\beta_4}, \\
 ac+1=&p^{\alpha_2}q^{\beta}, &  bd+1=&p^{\alpha}q^{\beta_5}, \\
 ad+1=&p^{\alpha_3}q^{\beta}, &  cd+1=&q^{\beta_6},
\end{align*}
with $0<\alpha<\alpha_2<\alpha_5$ and $\beta<\beta_4<\beta_5<\beta_6$. We consider the second equation of system \eqref{eq:SUnit} and obtain after dividing through the common denominator $p^\alpha q^\beta$ the
equation
$$
 p^{\alpha_2-\alpha}(p^{\alpha_3-\alpha_2}-1)=p^{\alpha_3}q^{\beta_4}-p^{\alpha_2}q^{\beta_5}+q^{\beta_5-\beta}-q^{\beta_4-\beta}
$$
That is $q^{\beta_4-\beta}|p^{\alpha_3-\alpha_2}-1$ and Lemma \ref{lem:q-adic-bound} implies that $\beta_4-\beta=u_q=1$. We apply Lemma \ref{lem:s4_low_bound} and obtain
$$q^\beta \cdot p^\alpha q^\beta =\gcd(s_1,s_4)\gcd(s_2,s_4)<s_4=q^\beta \cdot p^\alpha q^\beta<p^\alpha q^{\beta+1}.$$
This implies that $q^\beta<q$, i.e. $\beta=0$. But, $\beta=0$ implies that $ab+1=1$, a contradiction.

\subsection{Case II}

In the second case we have 
\begin{align*}
 ab+1=&q^{\beta'}, &  bc+1=&p^{\alpha_4}q^{\beta}, \\
 ac+1=&p^{\alpha}q^{\beta'}, &  bd+1=&p^{\alpha}q^{\beta_5}, \\
 ad+1=&p^{\alpha_3}q^{\beta}, &  cd+1=&q^{\beta_6},
\end{align*}
with $0<\alpha<\alpha_3,\alpha_4$ and $\beta<\beta'<\beta_5<\beta_6$. We consider the first equation of system \eqref{eq:SUnit} and obtain after rearranging terms
$$
p^\alpha -1= p^{2\alpha}q^{\beta_5}-p^\alpha q^{\beta_5-\beta'}-q^{\beta_6}+q^{\beta_6-\beta'}.
$$
Lemma \ref{lem:q-adic-bound} yields $\beta_5-\beta'=1$. By an application of Lemma \ref{lem:s4_low_bound} we obtain
$$q^{\beta'} \cdot p^\alpha q^{\beta}=\gcd(s_1,s_5)\gcd(s_3,s_5)<s_5=q^{\beta'+1}p^\alpha, $$
which yields a contradiction unless $\beta=0$. But $\beta=0$ is also impossible due to Lemma \ref{lem:alpha_beta_zero}.

\subsection{The Case III}

In the third case we have:
\begin{align*}
 ab+1=&p^\alpha q^{\beta}, &  bc+1=&p^{\alpha_4}q^{\beta'}, \\
 ac+1=&q^{\beta_2}, &  bd+1=&q^{\beta_5}, \\
 ad+1=&p^{\alpha}q^{\beta'}, &  cd+1=&p^{\alpha_6}q^{\beta},
\end{align*}
with $0<\alpha\leq\alpha_4<\alpha_6$ and $\beta<\beta'<\beta_2<\beta_5$.
If we consider the first equation of system \eqref{eq:SUnit} and rearrange it in view of $q$-adic valuations we obtain
$$p^{\alpha_6-\alpha}+1=p^{\alpha_6}q^\beta-p^{\alpha_4}q^{2\beta'-\beta}+q^{\beta'-\beta}+p^{\alpha_4-\alpha}q^{\beta_4-\beta}.$$
Lemma \ref{lem:p-adic-bound} implies now that either $\beta=1$ or $\beta_2-\beta=1$. The second case
cannot hold since $\beta<\beta'<\beta_2$. However, if we consider the second equation of system \eqref{eq:SUnit} in view of $q$-adic valuations
we have
$$p^\alpha(p^{\alpha_4-\alpha}+1)=p^{\alpha+\alpha_4}q^{\beta'}-q^{\beta_2+\beta_5-\beta'}+q^{\beta_2-\beta'}+q^{\beta_5-\beta'}.$$
By Lemma \ref{lem:q-adic-bound} we obtain 
that either $\beta'=1$ or $\beta_2-\beta'=1$. Thus we conclude that $\beta=1$ and $\beta_2=\beta'+1$. Moreover the inequality $s_2<s_3$ yields that $p^\alpha>q$. Now by an almost identical computation
as in Case III in the proof of Proposition \ref{prop:Wief_padic} we obtain
\begin{align*}
 a^2 =&  \frac{q^2}{p^{\alpha_4-\alpha}}-\frac{q}{p^{\alpha_4}}+L\left(1.001 \cdot \left(\frac{q^2}{p^{2\alpha_4-\alpha}q^{\beta'}}+ \frac{q}{p^{2\alpha_4}q^{\beta'}}+\frac{q}{p^{\alpha_4-\alpha}q^{\beta'}}
 +\frac{1}{p^{\alpha_4} q^{\beta'}}\right)\right)\\ 
 =&  \frac{q^2}{p^{\alpha_4-\alpha}}-\frac{q}{p^{\alpha_4}}+L\left(\frac{1.001}{p^{\alpha_4-\alpha}} \cdot \left(\frac{1}{q}+ \frac{1}{q^3}+\frac{1}{q}+\frac{1}{q^3}\right)\right)\\
 =&  \frac{q^2}{p^{\alpha_4-\alpha}}-\frac{1}{p^{\alpha_4-\alpha}}\cdot \frac{q}{p^\alpha}+L\left(\frac{2.004}{p^{\alpha_4-\alpha}}\cdot \frac{1}{q}\right).
\end{align*}
and the same argument as in Case III in the proof of Proposition \ref{prop:Wief_padic} applies.

\subsection{The Case IV}

In the last case we have:
\begin{align*}
 ab+1=&p^\alpha q^{\beta}, &  bc+1=&q^{\beta_4}, \\
 ac+1=&p^{\alpha}q^{\beta'}, &  bd+1=&p^{\alpha_5}q^{\beta'}, \\
 ad+1=&q^{\beta_3}, &  cd+1=&p^{\alpha_6}q^{\beta},
\end{align*}
with $0<\alpha<\alpha_5<\alpha_6$ and $\beta<\beta'<\beta_3,\beta_4$.
We consider the third equation of system \eqref{eq:SUnit} in view of $q$-adic valuations and obtain
$$p^\alpha(p^{\alpha_6-\alpha}+1)=p^{\alpha+\alpha_6}q^{\beta}-q^{\beta_3+\beta_4-\beta}+q^{\beta_3-\beta}+q^{\beta_4-\beta} .$$
Lemma \ref{lem:q-adic-bound} yields that either $\beta=1$ or $\beta_3-\beta=1$ or $\beta_4-\beta=1$. 
The last two cases cannot hold, since $\beta<\beta'<\beta_3,\beta_4$. However, if we consider the second equation of system \eqref{eq:SUnit} we obtain
$$p^\alpha(p^{\alpha_5-\alpha}+1)=p^{\alpha+\alpha_5}q^{\beta'}-q^{\beta_3+\beta_4-\beta'}+q^{\beta_3-\beta'}+q^{\beta_4-\beta'}$$
and Lemma \ref{lem:q-adic-bound} implies that either $\beta'=1$ or $\beta_3-\beta'=1$ or $\beta_4-\beta'=1$. Obviously the first case cannot hold since $1=\beta<\beta'$. 

Let us assume for the moment that $\beta_4=\beta'+1$. By Lemma \ref{lem:s4_low_bound} we obtain
$$q \cdot q^{\beta'}=\gcd(s_1,s_4)\gcd(s_2,s_4) <s_4= q^{\beta'+1},$$
a contradiction. Therefore we may assume that $\beta_3=\beta'+1$ and $\beta_4=\beta'+2+\ell$ with some non-negative integer $\ell$.

Now, we are almost in the same situation as in Case IV of the proof of Proposition \ref{prop:Wief_padic}. Thus we compute the quantity $a^2$.
Before we do this let us note that $p^\alpha<q$ holds due to the inequality $s_2<s_3$. Now let us compute
\begin{align*}
 a^2=&\frac{(s_1-1)(s_2-1)}{s_4-1}\\
 =& \frac{p^{2\alpha}q^{1+\beta'}-p^{\alpha}q^{\beta'}-p^{\alpha}q+1}{q^{\beta'+2+\ell}-1}\\
 =&  \left(p^{2\alpha}q^{1+\beta'}-p^{\alpha}q^{\beta'}\right)\left(\frac{1}{q^{\beta'+2+\ell}}+\frac{1}{q^{\beta'+2+\ell}(q^{\beta'+2+\ell}-1)}\right)-
 \frac{p^\alpha q-1}{q^{\beta'+2+\ell}-1}\\
 =& \frac{p^{\alpha}(p^{\alpha}q-1)}{q^{2+\ell}}+\stackrel{X:=}{\overbrace{ \frac{p^\alpha(p^\alpha q-1)}{q^{2+\ell}(q^{\beta'+2+\ell}-1)}}}
 -\stackrel{Y:=}{\overbrace{\frac{p^\alpha q-1}{q^{\beta'+2+\ell}-1}}}
\end{align*}
Since $p^\alpha<q$ and $\beta'\geq 2$ it is easy to see that $0<X,Y<\frac{1}{q^{2+\ell}}$ and therefore $|X-Y|<\frac{1}{q^{2+\ell}}$. Hence $\frac{p^{\alpha}(p^{\alpha}q-1)}{q^{2+\ell}}$ is an 
integer, which contradicts the fact that $q^2\nmid p^{\alpha}(p^{\alpha}q-1)$.

Therefore the proof of Proposition \ref{prop:Wief_qadic} is complete. \hfill $\square$

Let us summarize what we have proved so far. Combining our results obtained in the $p$-adic case (Proposition \ref{prop:Wief_padic}) and our results found
in the $q$-adic case (Proposition \ref{prop:Wief_qadic}) we immediately obtain  

\begin{corollary}\label{cor:large_q}
  Let $p<q$ be primes and assume that $p\equiv 3 \mod 4$. Furthermore assume that $(p,q)$ is not an extreme Wieferich pair.
  Then a $S$-Diophantine quadruple exists only if $q\leq 700393$. Moreover if $p^2\nmid q^{p-1}-1$ we have 
  $$p<q<52038\log p.$$
\end{corollary}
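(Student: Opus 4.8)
The plan is to derive the corollary as a straightforward case distinction on \emph{how} $(p,q)$ fails to be an extreme Wieferich pair, feeding each case into one of the two propositions already established. First I would unravel the definition: to say that $(p,q)$ is not an extreme Wieferich pair means, by negating \eqref{eq:Wieferich-ext}, that at least one of the two defining inequalities is violated, i.e.\ either
$$v_q(p^{q-1}-1)<2 \qquad \text{or} \qquad v_p(q^{p-1}-1)<\max\left\{2,\tfrac{\log q}{\log p}\right\}.$$

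Next I would treat the two cases separately. In the first case I would recall that $p^{q-1}\equiv 1 \bmod q$ by Fermat's little theorem, so $u_q=v_q(p^{q-1}-1)\geq 1$ always holds; hence the inequality $v_q(p^{q-1}-1)<2$ forces $u_q=1$. This is precisely the hypothesis of Proposition \ref{prop:Wief_qadic}, which (using $p\equiv 3 \bmod 4$ and $p<q$, both assumed) yields $q\leq 700393$. In the second case the inequality $v_p(q^{p-1}-1)<\max\{2,\log q/\log p\}$ is literally the $p$-adic Wieferich condition appearing in Proposition \ref{prop:Wief_padic}, so that proposition applies and again gives $q\leq 700393$. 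Since at least one of the two cases must occur, the first assertion of the corollary follows.

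For the sharper \emph{moreover} claim I would observe that $p^2\nmid q^{p-1}-1$ says exactly that $u_p=v_p(q^{p-1}-1)<2$, and since $2\leq \max\{2,\log q/\log p\}$ this places us squarely in the second case above. Thus Proposition \ref{prop:Wief_padic} is applicable and delivers not only $q\leq 700393$ but the stronger conclusion $p<q<52038\log p$.

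The argument is essentially bookkeeping, so I do not expect any genuine analytic obstacle; everything hard has already been absorbed into Propositions \ref{prop:Wief_padic} and \ref{prop:Wief_qadic}. The only points that require care are to negate the definition of extreme Wieferich pair correctly, to match the two resulting inequalities exactly to the respective hypotheses of the two propositions, and to record the elementary but essential remark that $u_q\geq 1$, so that the conditions ``$v_q(p^{q-1}-1)<2$'' and ``$u_q=1$'' coincide.
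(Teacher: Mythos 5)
Your proposal is correct and takes essentially the same route as the paper: the corollary is obtained by noting that failure of the extreme Wieferich condition puts one in the hypothesis of either Proposition \ref{prop:Wief_qadic} (when $u_q=1$) or Proposition \ref{prop:Wief_padic} (when $v_p(q^{p-1}-1)<\max\{2,\log q/\log p\}$), each of which gives $q\leq 700393$. For the \emph{moreover} part the paper makes exactly your observation, namely that $p^2\nmid q^{p-1}-1$ forces $u_p=1<\max\{2,\log q/\log p\}$, so the sharper bound $p<q<52038\log p$ of Proposition \ref{prop:Wief_padic} applies.
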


\begin{proof}
 Note that if $p^2\nmid q^{p-1}-1$ we have $u_p=1<\max\left\{2,\frac{\log q}{\log p}\right\}$ and we can use the sharper bound provided by Proposition \ref{prop:Wief_padic}.
\end{proof}

\section{The remaining small cases}\label{Sec:Small}

In \cite{Szalay:2015} Szalay and the author found a method to reduce the huge bound for $\log d$ coming from the theory of linear forms in logarithms to
comparable small bounds by using continued fractions. In particular they proved the following lemma (see \cite[Lemma 3.1]{Szalay:2015}):

\begin{lemma}\label{lem:CFracMethod}
 Let $C\ge\log d$ and assume that for some real number $\delta>0$ we have
 \[\left|P\log p-Q\log q\right|>\delta\]
 for all convergents $P/Q$ to ${\log q}/{\log p}$ with $Q<{2C}/{\log q}$ and $P<{2C}/{\log p}$. 
 Then
 \[\log d<2C_1+u_q\log q+u_p\log p+\log\left(\frac{2C_1^2}{\log p\log q}\right),\]
 where
 \[C_1=\max\left\{\log\left(\frac{2}{\delta}\right),\log\left(\frac{8C}{\log p \log q}\right)\right\}.\]
\end{lemma}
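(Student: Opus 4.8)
The plan is to run a continued-fraction (best-approximation) reduction against the Stewart--Tijdeman linear form, and then to convert the resulting bound on the small variables into a bound on $d$ by elementary divisibility together with the $p$-adic and $q$-adic estimates of Lemma \ref{lem:p-adic}. First I would recall the quantity $T=\frac{s_1s_6}{s_3s_4}=p^{A'}q^{B'}$ with $A'=\alpha_1+\alpha_6-\alpha_3-\alpha_4$ and $B'=\beta_1+\beta_6-\beta_3-\beta_4$, which by \eqref{eq:Stewart} satisfies $0<\log T<\frac{3}{2s_1}$. Thus $\Lambda:=\log T=A'\log p+B'\log q$ is a nonzero linear form in $\log p,\log q$ whose size is exponentially small in $s_1=ab+1$. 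Since $c<d$ forces $s_6=cd+1\le d^2$, every $S$-unit satisfies $\log s_i\le\log s_6\le2\log d\le2C$, so $A\le 2C/\log p$ and $B\le2C/\log q$; using Lemma \ref{lem:min_al} and the size restrictions to collapse $B'$ to a single exponent difference gives $|B'|\le\beta_6\le2C/\log q$. Because $\Lambda\neq0$, neither $A'$ nor $B'$ vanishes and they have opposite signs, so with $\theta=\log q/\log p$ and $(P,Q)=(|A'|,|B'|)$ we have $|\Lambda|=\log p\,|Q\theta-P|$.

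The arithmetic core is the best-approximation property of continued fractions: if $P_k/Q_k$ is the convergent to $\theta$ with $Q_k\le Q<Q_{k+1}$, then $|Q\theta-P|\ge|Q_k\theta-P_k|$. Since $Q_k\le Q=|B'|<2C/\log q$ and correspondingly $P_k\approx Q_k\theta<2C/\log p$, the hypothesis applies to $P_k/Q_k$ and yields
\[|\Lambda|=\log p\,|Q\theta-P|\ge\log p\,|Q_k\theta-P_k|=|P_k\log p-Q_k\log q|>\delta.\]
Comparing with the upper bound gives $\delta<\frac{3}{2s_1}$, that is $s_1=ab+1<\frac{3}{2\delta}$, so that $\log(ab)\le\log s_1<\log(2/\delta)\le C_1$. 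This is the reduction step proper: it pins down the two smallest entries of the quadruple in terms of $C_1$, which is only logarithmic in $C$ and in $1/\delta$.

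It then remains to promote this to a bound on $d$. Here I would invoke Lemma \ref{lem:abcd-div}: since $s_5\nmid s_6$ by Lemma \ref{lem:tripel} and $\gcd(s_6,s_5)$ is coprime to $d$, we get $d\mid\frac{s_6-s_5}{\gcd(s_6,s_5)}$, whence
\[\log d<(\alpha_6-\alpha_5)^{+}\log p+(\beta_6-\beta_5)^{+}\log q.\]
Each exponent difference is bounded by extracting valuations from the relevant equation of \eqref{eq:SUnit}: a relation of the shape $p^{\alpha_6-\alpha_5}\mid q^{m}\pm1$ (respectively $q^{\beta_6-\beta_5}\mid p^{n}\pm1$) together with Lemma \ref{lem:p-adic} gives $(\alpha_6-\alpha_5)\log p\le u_p\log p+\log m$ and $(\beta_6-\beta_5)\log q\le u_q\log q+\log n$, where $m,n$ are exponent differences already controlled by $C_1$ from the first step. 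Summing yields $u_q\log q+u_p\log p$ together with residual terms $\log m,\log n$ and the $s_1$-contribution, and collecting them reproduces exactly $\log d<2C_1+u_q\log q+u_p\log p+\log\!\big(2C_1^{2}/(\log p\log q)\big)$.

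The step I expect to be the main obstacle is this last conversion. Exhibiting $p^{\alpha_6-\alpha_5}$ (and $q^{\beta_6-\beta_5}$) as a divisor of a value $q^{m}\pm1$ (respectively $p^{n}\pm1$) is genuinely configuration-dependent, since which exponents dominate depends on the case in Table \ref{tab:p=3mod4}, and marshalling the resulting error terms into the precise closed form $2C_1+\log(2C_1^{2}/(\log p\log q))$ requires careful but routine bookkeeping with the estimates $v_p(q^{x}-1)\le u_p+\tfrac{\log x}{\log p}$. A secondary technical point is making the best-approximation step airtight: one must verify that the (possibly non-reduced) pair $(|A'|,|B'|)$ is dominated by a genuine convergent whose denominator lies below $2C/\log q$ and whose numerator lies below $2C/\log p$, which is precisely the regime in which the hypothesis on $|P\log p-Q\log q|$ is assumed.
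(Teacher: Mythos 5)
The paper itself does not prove this lemma; it is imported verbatim from \cite[Lemma 3.1]{Szalay:2015}, so your proposal has to be judged on its own merits, and it has two genuine gaps. The first is in the reduction step. The overall idea (squeeze $\log T$ between the convergent hypothesis and the Stewart--Tijdeman upper bound $\tfrac{3}{2s_1}$, concluding $ab+1<\tfrac{3}{2\delta}$, hence $\log s_1<C_1$) is indeed the mechanism behind Lemma \ref{lem:alpha12_beta12_small}, but your coefficient bound is unjustified: you claim Lemma \ref{lem:min_al} ``collapses'' $B'=\beta_1+\beta_6-\beta_3-\beta_4$ to a single exponent difference, giving $|B'|\le 2C/\log q$. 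Lemma \ref{lem:min_al} only says that the two smallest entries of $(\beta_1,\beta_3,\beta_4,\beta_6)$ coincide; they may both lie in the numerator or both in the denominator of $T$ (e.g. $\beta_1=\beta_6$ minimal, or $\beta_3=\beta_4$ minimal --- exactly the patterns of Cases II--IV of Table \ref{tab:p=3mod4}), in which case $B'$ is a sum of two differences and is bounded only by $2B\le 4C/\log q$. The convergent your best-approximation step then requires lies outside the range covered by the hypothesis. This is repairable (switch among the three small forms $\frac{s_1s_6}{s_3s_4}$, $\frac{s_2s_5}{s_3s_4}$, $\frac{s_1s_6}{s_2s_5}$ according to which quadruple of Lemma \ref{lem:min_al} splits, or transfer the bound from the $\alpha$-coefficient to the $\beta$-coefficient using $|\log T|<\tfrac12$), but as written the step fails. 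Moreover you bound only $s_1$; the conclusion (and Lemma \ref{lem:alpha12_beta12_small}) also needs $s_2=ac+1<e^{C_1}$, obtained by running the same argument on $T'=\frac{s_2s_5}{s_3s_4}$ as in the proof of Lemma \ref{lem:ub_beta12}; without it $c$, and hence the differences $c-a$, $c-b$ that any conversion step must use, remain uncontrolled.

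The second gap is the conversion to a bound on $\log d$, which you yourself flag as the main obstacle but leave unfilled. The relations you postulate, $p^{|\alpha_6-\alpha_5|}\mid q^{m}\pm1$ with $m$ ``controlled by $C_1$'', are not available in general: in this paper relations of that shape (e.g. $p^{\alpha_2-\alpha}\mid q^{\beta_5-\beta_4}-1$ following \eqref{eq:CaseI}) occur only under the special exponent patterns of Table \ref{tab:p=3mod4}, and there the exponent $\beta_5-\beta_4$ is a difference of \emph{large} exponents, controlled by Proposition \ref{prop:bound_AB}, not by $C_1$. Worse, even granting such relations, your accounting cannot reproduce the stated bound: in $d\mid\frac{s_6-s_5}{\gcd(s_6,s_5)}$ the factor $\gcd(s_6,s_5)$ has been divided out, so your final estimate would read $\log d\le u_p\log p+u_q\log q+\log m+\log n$, with no source for the leading term $2C_1$. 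The mechanism that actually works with the paper's toolkit is different: since $s_5-s_3=d(b-a)$, $s_6-s_3=d(c-a)$, $s_6-s_5=d(c-b)$, and each $\gcd(s_i,s_j)$ with $i,j\in\{3,5,6\}$ is coprime to $d$, these gcds divide $b-a$, $c-a$, $c-b$ respectively, all $<e^{C_1}$ once the reduction step is done; equivalently --- and this is where Lemma \ref{lem:p-adic}, hence $u_p$ and $u_q$, enter --- the identity $d(s_2-s_1)=a(s_6-s_5)$ (both sides equal $ad(c-b)$) gives $\min(\alpha_5,\alpha_6)\le v_p(s_2-s_1)\le\min(\alpha_1,\alpha_2)+v_p\bigl(q^{|\beta_1-\beta_2|}-1\bigr)$, and symmetrically for the $q$-adic valuation with the roles of $p$ and $q$ exchanged. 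Consequently at most one of $\alpha_3,\alpha_5,\alpha_6$ and at most one of $\beta_3,\beta_5,\beta_6$ can be large, so at least one $s_*\in\{s_3,s_5,s_6\}$ has both exponents bounded by quantities of the form $C_1+u\log(\cdot)+\log(C_1/\log(\cdot))$, and $\log d<\log s_*$ then yields exactly a bound of the claimed shape. Your proposal contains neither this step nor a workable substitute for it.
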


Also the following lemma is useful (see \cite[Lemma 3.2]{Szalay:2015}):

\begin{lemma}\label{lem:alpha12_beta12_small}
Under the assumptions of Lemma~\ref{lem:CFracMethod}, $\alpha_1,\alpha_2<{C_1}/{\log p}$ and $\beta_1,\beta_2<{C_1}/{\log q}$ follows, where
\[C_1=\max\left\{\log\left(\frac{2}{\delta}\right),\log\left(\frac{8C}{\log p \log q}\right)\right\}.\]
Moreover, $\log(ab+1)<C_1$ and $\log(ac+1)<C_1$ also hold.
\end{lemma}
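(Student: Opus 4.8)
The plan is to establish the two size bounds $\log(ab+1)<C_1$ and $\log(ac+1)<C_1$ first, and to deduce everything else from them. Since $ab+1=s_1=p^{\alpha_1}q^{\beta_1}$, we have $\log(ab+1)=\alpha_1\log p+\beta_1\log q$, a sum of two non-negative terms; hence any bound $\log(ab+1)<C_1$ forces both $\alpha_1\log p<C_1$ and $\beta_1\log q<C_1$, that is $\alpha_1<C_1/\log p$ and $\beta_1<C_1/\log q$. The identical remark applied to $ac+1=s_2=p^{\alpha_2}q^{\beta_2}$ yields the bounds for $\alpha_2$ and $\beta_2$. Thus the lemma reduces entirely to bounding the two smallest $S$-units $s_1$ and $s_2$.

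To bound $s_1$ I would reuse the Stewart--Tijdeman quantity $T=\frac{s_1s_6}{s_3s_4}$ of \eqref{eq:Stewart}. Setting $A'=\alpha_1+\alpha_6-\alpha_3-\alpha_4$ and $B'=\beta_1+\beta_6-\beta_3-\beta_4$, inequality \eqref{eq:Stewart} gives $0<\log T<\frac{3}{2s_1}$ with $\log T=A'\log p+B'\log q$; as the four integers are distinct we have $T\neq1$, and since $p\neq q$ the logarithms $\log p,\log q$ are $\mathbb{Q}$-linearly independent, so $\log T\neq0$ and $(A',B')\neq(0,0)$. Therefore $s_1<\frac{3}{2\log T}$ and it remains to bound $\log T$ from below. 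Put $\theta=\log q/\log p$, with convergents $p_n/q_n$, and write $\log T=\log p\,|A'+B'\theta|$, so that the smallness of $\log T$ says $-A'/B'$ is an excellent rational approximation to $\theta$. Using the crude bounds $s_i\le s_6=cd+1<d^2\le e^{2C}$ one controls the sizes of $A'$ and $B'$; the best--approximation property of continued fractions then produces a convergent $p_n/q_n$ with $q_n\le|B'|$ and $\log T\ge\log p\,|q_n\theta-p_n|=|q_n\log q-p_n\log p|$. Provided this convergent lies in the admissible range $q_n<2C/\log q$, $p_n<2C/\log p$, the hypothesis of Lemma~\ref{lem:CFracMethod} gives $|q_n\log q-p_n\log p|>\delta$, whence $\log T>\delta$ and $s_1<\frac{3}{2\delta}<\frac2\delta$, so that $\log(ab+1)<\log(2/\delta)\le C_1$.

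If instead $-A'/B'$ is not a convergent lying in that range, I would fall back on Legendre's inequality $|A'+B'\theta|\ge\frac{1}{2|B'|}$, giving $\log T\ge\frac{\log p}{2|B'|}$ and hence $s_1<\frac{3|B'|}{\log p}$; inserting the size bound for $|B'|$, which is of order $2C/\log q$, yields $s_1<\frac{6C}{\log p\log q}<\frac{8C}{\log p\log q}$ and therefore $\log(ab+1)<\log\!\bigl(\tfrac{8C}{\log p\log q}\bigr)\le C_1$. The two regimes match the two terms in the definition of $C_1$ exactly. The bound $\log(ac+1)<C_1$ is obtained in the same way from the companion quantity $T'=\frac{s_2s_5}{s_3s_4}$, which satisfies $0<\log T'<\frac{3}{2s_2}$ by the analogue of \eqref{eq:Stewart}; this supplies the bounds for $\alpha_2$ and $\beta_2$ and completes the proof.

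The genuinely delicate point is the range check in the second paragraph: one must guarantee that the convergent forced out by the smallness of $\log T$ really satisfies $q_n<2C/\log q$ and $p_n<2C/\log p$, where the gap $\delta$ is available. This requires tracking the sizes of $A'$ and $B'$ against the convergent denominators and reconciling the factor coming from the estimate $s_6<d^2\le e^{2C}$ with the threshold $2C$ in the hypothesis; the second term $\log(8C/(\log p\log q))$ in $C_1$ is present precisely to absorb the case in which the range check fails and one must resort to the unconditional Legendre bound. Everything else --- the elementary inequalities for $\log T$ and $\log T'$ and the passage from the $S$-unit bounds to the exponent bounds --- is routine.
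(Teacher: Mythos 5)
Your opening reduction (from $\log s_1,\log s_2<C_1$ to the exponent bounds) and your choice of the Stewart--Tijdeman quantities $T=\frac{s_1s_6}{s_3s_4}$ and $T'=\frac{s_2s_5}{s_3s_4}$ with $0<\log T<\frac{3}{2s_1}$ are correct, and they are the natural starting point: the same quantities underlie the archimedean analogue, Lemma~\ref{lem:ub_beta12}, and the proof cited from \cite[Lemma 3.2]{Szalay:2015}. The argument breaks down, however, exactly at the point you yourself call ``genuinely delicate'', and the fix you propose there is not valid. Your dichotomy is: either the fraction forced out by the smallness of $\log T$ is a convergent lying in the admissible range, in which case the hypothesis of Lemma~\ref{lem:CFracMethod} applies, or else one falls back on ``Legendre's inequality'' $|A'+B'\theta|\ge\frac{1}{2|B'|}$. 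But that inequality is not unconditional: by Legendre's theorem it is guaranteed only when the fraction $P/Q$, written in lowest terms, is \emph{not} a convergent of $\theta$. The case covered by neither branch is that the best-approximation step lands on a convergent $p_n/q_n$ with $q_n\ge 2C/\log q$ (or $p_n\ge 2C/\log p$): there the hypothesis of Lemma~\ref{lem:CFracMethod} is silent, Legendre gives nothing, and $|q_n\log q-p_n\log p|$ is of size roughly $\log p/q_{n+1}$, which is arbitrarily small when the next partial quotient of $\theta$ is large. In that case no lower bound for $\log T$, hence no upper bound for $s_1$, follows at all.

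Moreover, this uncovered case cannot be dismissed with the size information you invoke. From $s_i\le s_6<d^2\le e^{2C}$ one gets only $\beta_i<2C/\log q$ for each $i$, hence $|B'|\le\max\{\beta_1+\beta_6,\,\beta_3+\beta_4\}<4C/\log q$; your assertion that $|B'|$ is ``of order $2C/\log q$'' is off by a factor of $2$. Nothing available at this stage of the argument excludes, say, $s_3s_4$ being nearly a pure power of $q$ and $s_1s_6$ nearly a pure power of $p$ with $a,b,c$ all comparable to $d$, in which case $|B'|$ is genuinely of size $4C/\log q$ while $\log T$ is tiny; the convergent produced by the best-approximation property can then have denominator anywhere in the window $[2C/\log q,\,4C/\log q]$, outside the admissible range. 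The same missing factor of $2$ also ruins the numerology of your fallback branch: with the correct bound $|B'|<4C/\log q$, Legendre yields only $s_1<3|B'|/\log p<12C/(\log p\log q)$, which overshoots the term $8C/(\log p\log q)$ in the definition of $C_1$. To close the proof one needs additional bookkeeping tying $|A'|$, $|B'|$, and the relevant convergent to the threshold $2C$ appearing in the hypothesis; that is precisely what the proof in \cite{Szalay:2015} has to supply, and it is the part missing from your argument.
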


One can apply these two lemmas repeatedly and obtain small upper bounds for $\log d$ which makes a computer search feasible. Moreover the algorithm is very efficient
and it is possible to test for many pairs $(p,q)$ of primes whether a $\{p,q\}$-Diophantine quadruple exists. We use the following algorithm to find all
$S$-Diophantine quadruples for given $S=\{p,q\}$.

\begin{algo}\label{algo}
 Given two primes $p,q$ such that either $p\equiv 3\mod 4$ or $q\equiv 3 \mod 4$. Then the algorithm returns all possible $\{p,q\}$-Diophantine quadruples.
 \begin{enumerate}
  \item We compute the bound
  $$\log d<\log \left(p^Aq^B\right)<C_0:= 104076 \log p \log q.$$
  \item We use Lemma \ref{lem:CFracMethod} and Lemma \ref{lem:alpha12_beta12_small} to compute the new upper bounds $C$ and $C_1$ by using the upper bound $C_0\geq \log d$. 
  If $C<C_0 -0.1$ we put $C_0:=C$ and repeat this step.
  \item For all exponents $0\leq \alpha_1, \alpha_2\leq \frac{C_1}{\log p}$ and all exponents $0\leq\beta_1, \beta_2<\frac{C_1}{\log q}$
  we do the following:
  \begin{enumerate}
  \item Compute $g=\gcd(p^{\alpha_1}q^{\beta_1}-1,p^{\alpha_2}q^{\beta_2}-1)$. Note that $\gcd(s_1-1,s_2-1)=\gcd(ab,ac)=a\gcd(b,c)$.
  \item If $g>0$ we compute for all divisors $a$ of $g$ the quantity $b=\frac{p^{\alpha_1}q^{\beta_1}-1}a$ and if $a<b$ we compute $c=\frac{p^{\alpha_2}q^{\beta_2}-1}a$
  and check whether $c$ is an integer such that $a<b<c$. If $(a,b,c)$ is a $\{p,q\}$-Diophantine triple, i.e. the only prime divisors of $bc+1$ are $p$ and $q$, then we store $(a,b,c)$ in a list $\mathrm{L}$.
 \end{enumerate}
  \item For all exponents $0\leq \alpha_6 \leq \frac{C}{\log p}$, all exponents $0\leq\beta_6<\frac{C}{\log q}$ and all triples $(a,b,c)\in\mathrm{L}$
  we do the following: 
  \begin{enumerate}
  \item We compute $d=\frac{p^{\alpha_6}q^{\beta_6}-1}{c}$.
  \item We check whether $d$ is an integer such that $d>c$.
  \item We check whether $(a,b,c,d)$ is a $\{p,q\}$-Diophantine quadruple, that is we check whether the only prime divisors of $ad+1$ and $bd+1$ are $p$ and $q$.
  \item If $(a,b,c,d)$ is a $\{p,q\}$-Diophantine quadruple, we store $(a,b,c,d)$ in a list $\mathrm{Quad}$.
  \end{enumerate}
  \item We return the list $\mathrm{Quad}$.
 \end{enumerate}
\end{algo}

We implemented this algorithm in PARI/GP \cite{pari} and checked all pairs of primes such that $p<q$, $p\equiv 3 \mod 4$ and $q<52038 \log p$. This are $340306885$ pairs and it
took about $21$ on a usual PC (Intel i7-7500U -- 2.70 GHz). However we found no $S$-Diophantine quadruple. In view of Corollary \ref{cor:large_q} this proves Theorem \ref{th:gen} for all
pairs $(p,q)$ of primes such that $p<q$, $p\equiv 3\mod 4$ and $p^2\nmid q^{p-1}-1$.

For the remaining cases we do the following. For all pairs of primes $(p,q)$ such that $p<q$, $p\equiv 3 \mod 4$ and $52038 \log p\leq q \leq 700393$, we check whether $p^2| q^{p-1}-1$.
In the cases for which $p^2| q^{p-1}-1$ we apply Algorithm \ref{algo}. Let us note that only $24297$ pairs $(p,q)$ of the $60321782$ remaining pairs of primes satisfy $p^2| q^{p-1}-1$.
Therefore the running time of $87$ seconds was rather short. Since we found in this second round no $S$-Diophantine quadruple the proof of Theorem \ref{th:gen} is now complete.

\section{Further Remarks and open problems}\label{Sec:Remarks}

In this final section we want to discuss several open problems concerning this topic. First, we want to mention that with some effort it seems
to be possible to resolve the case of primes $p<q$ such that $q\equiv 3 \mod 4$. We hope to be able to prove in a forthcoming paper the following conjecture:

\begin{conjecture}
 Let $p<q$ be primes and assume that $(p,q)$ is not a Wieferich pair nor satisfies $p\equiv q\equiv 1 \mod 4$. Then there is no $\{p,q\}$-Diophantine quadruple.
\end{conjecture}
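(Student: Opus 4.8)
The plan is to isolate the one genuinely new configuration and then to transpose the machinery of Sections~\ref{Sec:Prime}--\ref{Sec:Wief_qadic}. First I would clear away everything already known. Since every extreme Wieferich pair is an ordinary Wieferich pair, the assumption that $(p,q)$ is not a Wieferich pair implies in particular that it is not an extreme Wieferich pair; hence for $p\equiv 3 \mod 4$ the desired conclusion is precisely Theorem~\ref{th:gen}. The case $p=2$ (and $p=3$) is Theorem~\ref{th:p23}, and the case $p\equiv q\equiv 3 \mod 4$ is the theorem of Szalay and Ziegler~\cite{Szalay:2013b}. As the hypothesis excludes $p\equiv q\equiv 1 \mod 4$, the only surviving configuration is
$$p\equiv 1 \mod 4,\qquad q\equiv 3 \mod 4,\qquad 5\leq p<q,\qquad (p,q)\text{ not a Wieferich pair}.$$
Thus the entire conjecture reduces to this single case, and ``not Wieferich'' here means $u_p=1$ or $u_q=1$.

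For this configuration the strategy is to run the argument of Sections~\ref{Sec:Prime}--\ref{Sec:Wief_qadic} with $q$ taking over the role of the prime $\equiv 3 \mod 4$. Proposition~\ref{prop:p=3mod4} was deliberately proved without the assumption $p<q$ and uses only the quadratic-residue input of Lemma~\ref{lem:quad_res}; applying it with $q$ in that role produces a transposed version of Table~\ref{tab:p=3mod4} in which the $\beta$-exponents realise the sparse pattern of the former $\alpha$-column and the $\alpha$-exponents realise the chains of the former $\beta$-column. Since the two primes are not both $\equiv 1 \mod 4$, Corollary~\ref{cor:bound_AB} still yields $\max\{A\log p,B\log q\}\leq 52038\log p\log q$, so all exponent differences stay bounded, and the symmetric Lemma~\ref{lem:p-adic} continues to control $v_q(p^{|\alpha_*-\alpha_\dagger|}\pm 1)$ and $v_p(q^{|\beta_*-\beta_\dagger|}\pm 1)$. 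When $u_q=1$ this transposes cleanly: the prime whose $\pm1$-valuations one must bound is the \emph{large} prime $q$, so from $v_q(p^{x}\pm1)\leq u_q+\log x/\log q<2$ one gets the clean bound $q^{v_q(p^x\pm1)}<q^2$ exactly as in Lemma~\ref{lem:q-adic-bound}, and the divisibility-and-integrality analysis of Sections~\ref{Sec:Wief_padic}--\ref{Sec:Wief_qadic} can be carried out on the transposed table by the same reasoning, forcing $q$ below an explicit threshold. The resulting finitely many pairs are then dispatched by Algorithm~\ref{algo} as in Section~\ref{Sec:Small}.

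The hard part will be the remaining branch, namely $q\equiv 3 \mod 4$ with $u_q\geq 2$ but $u_p=1$, which for a non-Wieferich pair is the only way the clean branch above can fail. Here one is forced to exploit the weak condition $u_p=1$, and the controlling prime in the transposed integrality arguments becomes the \emph{small} prime $p$. This is exactly where the symmetry breaks: in the genuine regime $p<q$ the threshold $\max\{2,\log p/\log q\}$ that would play the role of $\max\{2,\log q/\log p\}$ in Lemma~\ref{lem:p-adic-bound} degenerates to $2$, so $u_p=1$ no longer forces $v_p(q^{x}-1)<2$ once $p\mid x$, and $p^{v_p(q^{x}-1)}$ need not be smaller than $q^2$. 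Consequently the contradictions in the present paper---which repeatedly show that a quantity of shape $p^{i}/q^{j}$ lies within $1/q$ of an integer yet is not one---have transposed analogues of shape $q^{j}/p^{i}$ whose distance to the nearest integer may be as small as $p^{-i}$, while the natural $L$-error term is only of order $p^{-1}$; the integrality obstruction therefore evaporates in several subcases. Overcoming this is the crux. I expect it will require either recomputing $a^2$ (or $c^2$, $d^2$) from a different triple of $S$-units so that the large prime $q$ again governs the error term, or extracting a supplementary $q$-adic relation from the system~\eqref{eq:SUnit} to pin down the unbounded $p$-power directly; once this single branch is controlled, the remaining pairs fall to Algorithm~\ref{algo} and the conjecture follows.
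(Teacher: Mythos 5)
This statement is not proved in the paper at all: it is stated as an open conjecture in Section~\ref{Sec:Remarks}, immediately after the author's remark that he hopes ``to be able to prove in a forthcoming paper'' exactly this assertion; the results actually established here (Theorems~\ref{th:p23} and~\ref{th:gen}) cover only $p\in\{2,3\}$ and $p\equiv 3 \mod 4$. So there is no proof in the paper to compare yours against, and your attempt has to stand on its own. It does not: it is a proof plan with an admitted hole. Your reduction is correct, and it is precisely the easy part --- since a non-Wieferich pair is a fortiori not an extreme Wieferich pair, Theorem~\ref{th:gen} disposes of $p\equiv 3\mod 4$, Theorem~\ref{th:p23} of $p\in\{2,3\}$ (the stray pair $(2,3)$ via Lemma~\ref{lem:max_pq}), leaving $p\equiv 1\mod 4$, $q\equiv 3 \mod 4$, $5\le p<q$, with $u_p=1$ or $u_q=1$. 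But for that remaining case you prove nothing: for the branch $u_p=1$, $u_q\ge 2$ you write ``I expect it will require either \dots or \dots'', which is a research programme, not an argument, and it is exactly the point at which the problem is open.

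Moreover, the branch you declare to transpose ``cleanly'' ($u_q=1$) is also unverified, and the transposition is genuinely not verbatim. Applying Proposition~\ref{prop:p=3mod4} with the roles of the primes exchanged is legitimate (it was proved without assuming $p<q$), but it places the sparse column of Table~\ref{tab:p=3mod4} on the \emph{large} prime, and the integrality arguments of Sections~\ref{Sec:Wief_padic}--\ref{Sec:Wief_qadic} are not symmetric under this exchange. In the transposed Case~I, for example, $u_q=1$ pins $\beta_2=\beta+1$ and the key quantity becomes $a^2\approx q^{\beta_2-\beta}/p^{\alpha_4-2\alpha}$, whose denominator is a power of the \emph{small} prime bounded only by $q$; its distance to the nearest integer is merely $\ge p^{-(\alpha_4-2\alpha)}$, which can be of the same order as the $L$-error terms, so the paper's ``a non-integer within $1/q$ of an integer'' contradiction evaporates here too --- the very failure mode you attribute solely to the other branch. (Conversely, your stated obstruction for the $u_p=1$ branch is inaccurate: from $u_p=1$ and $|\beta_*-\beta_\dagger|\le B\le 52038\log p$ (Corollary~\ref{cor:bound_AB} still applies) one gets $p^{v_p(q^{|\beta_*-\beta_\dagger|}-1)}\le p\cdot p^{v_p(|\beta_*-\beta_\dagger|)}\le 52038\,p\log p<q^2$ once $q$ exceeds the paper's threshold; what genuinely breaks is the dichotomy $v_p\in\{0,u_p\}$ of Lemma~\ref{lem:q-adic-bound}, since its proof needs $52038\log p<p$, which fails for small $p$, and it is this dichotomy that the pinning steps such as ``$\beta_4=\beta+1$'' rely on.) So both branches require new case-by-case work of the kind carried out in Sections~\ref{Sec:Wief_padic}--\ref{Sec:Wief_qadic}, none of which is done; the statement remains exactly as open as the paper leaves it.
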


It would be very interesting to get rid of the Wieferich condition as we were able do to in the case that $S=\{2,q\}$ or $S=\{3,q\}$. With a little more effort will also prove
in a forthcoming paper the following conjecture:

\begin{conjecture}\label{th:p-5}
 Let $q\not\equiv 1 \mod 4$ be a prime. Then there is no $\{5,q\}$-Diophantine quadruple.
\end{conjecture}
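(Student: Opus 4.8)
The plan is to imitate the unconditional treatment of the $\{3,q\}$-case (Proposition~\ref{prop:Wief_3}), taking advantage of the fact that here the fixed prime $5$ is small. First I would dispose of the small values of $q$: the cases $q=2$ and $q=3$ are exactly the pairs $\{2,5\}$ and $\{3,5\}$, both of which are already excluded by Theorem~\ref{th:p23} (applied with $q=5$). Hence I may assume $q>5$, and since $q\not\equiv 1\mod 4$ is an odd prime this means $q\equiv 3\mod 4$. I keep the labelling of \eqref{eq:SUnit} with $p=5$, noting that now the prime $\equiv 3\mod 4$ is the \emph{larger} of the two primes.

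Second, I would extract the rigid shape of the exponents. Because $q\equiv 3\mod 4$, Lemma~\ref{lem:quad_res} and Proposition~\ref{prop:p=3mod4} apply with the two primes interchanged (both are stated without the hypothesis $p<q$), so one of the four patterns of Table~\ref{tab:p=3mod4} must hold, but with the roles of $\alpha$ and $\beta$ exchanged: the zeros and the ``staircase'' now sit among the $q$-exponents $\beta_1,\dots,\beta_6$. Since $5<q$ and the two primes are not both $\equiv 1\mod 4$, Corollary~\ref{cor:bound_AB} gives $\max\{A\log 5,\,B\log q\}\le 52038\log 5\log q$; in particular $B\le 52038\log 5$ is bounded by an \emph{absolute constant}, whereas the $5$-exponents $\alpha_i$ carry all the growth (up to $52038\log q$). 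Thus every $q$-exponent difference $|\beta_*-\beta_\dagger|$ is bounded by a constant, while the large exponent differences all lie on the prime $5$.

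Third — and this is the heart of the matter — I would establish an \emph{unconditional} $5$-adic valuation bound playing the role of Lemma~\ref{lem:p-adic-bound}. Since $\ord_5(q)\mid 4$, we have $u_5:=v_5\!\left(q^{\ord_5(q)}-1\right)\le v_5(q^4-1)$, and writing $q^4-1=(q-1)(q+1)(q^2+1)$ with all pairwise $\gcd$s dividing $2$, the prime $5$ divides exactly one of the three factors; hence $5^{u_5}\le q^2+1$. Combined with $|\beta_*-\beta_\dagger|\le B$ and Lemma~\ref{lem:p-adic}, this yields $5^{\,v_5(q^{|\beta_*-\beta_\dagger|}\pm 1)}<q^{3}$ for all $q$ admitting a quadruple (by Lemma~\ref{lem:max_pq} such $q$ exceed $10^5$), and in fact the sharp estimate $5^{\,v_5(q^{|\beta_*-\beta_\dagger|}\pm 1)}\le q^2+1$ whenever $5\nmid|\beta_*-\beta_\dagger|$. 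The essential point is that, unlike the general Wieferich situation, this bound needs \emph{no} hypothesis on $q$: it is precisely the mechanism that made the $\{3,q\}$-case unconditional, the only change being the harmless factor $q^2+1$ in place of $\tfrac{q+1}{2}$.

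Finally, I would run the four (role-swapped) cases of Table~\ref{tab:p=3mod4} as in Sections~\ref{Sec:Wief_padic} and~\ref{Sec:Wief_qadic}. Because the large exponents now lie on $5$, manipulating the appropriate equation of \eqref{eq:SUnit} and reducing modulo a power of $5$ produces a divisibility of the shape $5^{\,\mathrm{large}}\mid q^{\,\mathrm{small}}\pm 1$; for instance, in the analogue of Case~I one finds $5^{\alpha_4-\alpha_1}\mid q^{\beta_3-\beta_2}-1$. The $5$-adic bound above then forces the large $5$-exponent difference to be $O(\log q)$, after which the size and divisibility estimates of Lemmas~\ref{lem:tripel}, \ref{lem:abcd-div} and~\ref{lem:s4_low_bound}, together with an $a^2$-integrality computation in the $L$-notation, give a contradiction exactly as in the existing analyses. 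The finitely many remaining pairs with $q$ below the resulting threshold are cleared by Algorithm~\ref{algo} (applicable since $q\equiv 3\mod 4$) in combination with Lemma~\ref{lem:max_pq}. I expect the main obstacle to be quantitative rather than structural: the bound $5^{\,\mathrm{diff}}<q^{2+\varepsilon}$ is slightly weaker than the clean $p^{\,\mathrm{diff}}<q^2$ enjoyed for $p=3$, so the integer-threshold steps (such as pinning down $\beta_4=2\beta+1$ in Case~I) must be rechecked, taking $q$ large and, where needed, invoking the sharp estimate $5^{\,v_5(q^{|\beta_*-\beta_\dagger|}\pm1)}\le q^2+1$ that holds when $5\nmid|\beta_*-\beta_\dagger|$.
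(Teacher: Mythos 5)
First, a point of status: the statement you were asked to prove is Conjecture~\ref{th:p-5} of the paper, which the paper explicitly does \emph{not} prove --- it is deferred to a forthcoming paper ("with a little more effort"). So there is no proof in the paper to compare yours against, and your proposal has to stand on its own. Its preparatory layer is correct and is plausibly the intended skeleton: the reduction to $q\equiv 3\bmod 4$ via Theorem~\ref{th:p23}; the role-swapped use of Lemma~\ref{lem:quad_res} and Proposition~\ref{prop:p=3mod4} (legitimate, since both are stated without $p<q$), putting the Table~\ref{tab:p=3mod4} structure on the $q$-exponents; the bound $B\le 52038\log 5$ from Corollary~\ref{cor:bound_AB}; the unconditional estimate $5^{u_5}\le q^2+1$ from $q^4-1=(q-1)(q+1)(q^2+1)$; and the resulting bound $5^{v_5(q^{|\beta_*-\beta_\dagger|}\pm 1)}<q^3$. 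Your sample divisibility $5^{\alpha_4-\alpha_1}\mid q^{\beta_3-\beta_2}-1$ in the swapped Case~I is also what the second equation of \eqref{eq:SUnit} gives.

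The genuine gap is the last step, where you assert that the four swapped cases then close ``exactly as in the existing analyses,'' the remaining obstacle being ``quantitative rather than structural.'' That is not so. Your configuration --- structure on the \emph{large} prime $q$, valuation control only on the \emph{small} prime $5$, and only of strength $q^{2+\varepsilon}$ --- is matched by neither Section~\ref{Sec:Wief_padic} nor Section~\ref{Sec:Wief_qadic}. The role-swap of Section~\ref{Sec:Wief_padic} would require bounding $v_q(5^{|\alpha_*-\alpha_\dagger|}\pm 1)$, i.e.\ a Wieferich-type hypothesis on $u_q=v_q(5^{q-1}-1)$, which is exactly what your proof must avoid (your citation of the step ``$\beta_4=2\beta+1$'' belongs to this inapplicable branch). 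The role-swap of Section~\ref{Sec:Wief_qadic} is the one whose divisibilities you actually obtain, but there the case-closing arguments rely on the Wieferich hypothesis pinning exponent differences to be \emph{exactly} $u_q=1$ (e.g.\ ``$\beta_4-\beta=u_q=1$,'' fed into Lemma~\ref{lem:s4_low_bound} to force $\beta=0$); your unconditional bound gives only $\alpha_4-\alpha_1\le u_5+7$ with $5^{u_5}\le q^2+1$, and the same chain of steps then yields merely $5^{\alpha_1}\le 5^{7}(q^2+1)$, a bound on $ab+1$, not a contradiction. Consequently every one of the four cases needs a newly designed argument. Some do exist --- e.g.\ in swapped Case~I, multiplying the expansion of $a^2=(s_1-1)(s_2-1)/(s_4-1)$ by $5^{\alpha_4-\alpha_1}$ forces the exact identity $a^2 5^{\alpha_4-\alpha_1}=q^{\beta_2-\beta_4}\bigl(5^{\alpha_1}-1\bigr)$, which is impossible modulo $5$ --- but this is \emph{not} one of the paper's arguments, and in swapped Case~II (where $s_1=5^{\alpha_1}$, $s_2=5^{\alpha_1}q^{\beta_2}$, $s_4=5^{\alpha_3}q^{\beta_4}$ with $\alpha_3<\alpha_1$) the analogous expansion carries an uncontrolled term of size $5^{\alpha_1-\alpha_3}/q^{\beta_2}$, and neither your $5$-adic bound nor Lemma~\ref{lem:s4_low_bound} closes it. That missing case analysis is the actual content of the proof, and it is precisely the ``little more effort'' the paper postpones.
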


However, with much effort such results as Theorem \ref{th:p23} or Conjecture \ref{th:p-5} could also be established with $p=7$ or even $p=11$. But the problem is that
to the authors knowledge for fixed $p$ we do not know how large $u_p=v_p(q^{p-1}-1)$ can get. To the authors knowledge the best known upper bound for
$u_p$ is due to Yamada \cite{Yamada:2010} who used the very sharp results due to Bugeaud and Laurent~\cite{Bugeaud:1996} for linear forms in two $p$-adic logarithms.
In particular, Yamada obtained that
$$u_p\leq \left\lfloor 283(p-1)\frac{\log 2}{\log p}\cdot\frac{\log 2q}{\log p}\right\rfloor+4.$$
However this bound seems to be far from optimal. In particular, Yamada \cite[Conjecture 1.3]{Yamada:2010} conjectures that 
$$u_p\leq 2 +\frac{\log q+\log\log q+\log\log p}{\log p}.$$
In view of this conjecture it seems very unlikely that our $p$-adic Wieferich condition that $u_p<\max\left\{2,\frac{\log q}{\log p}\right\}$ is not fulfilled
if $q$ is large compared to $p$ and in view of our definition of an extreme Wieferich pair we are interested in the following problem:

\begin{problem}
Does there exist an extreme Wieferich pair $(p,q)$, with $q>p^2$? 
\end{problem}

The author's guess is that such an extreme Wieferich pair does not exist. Nevertheless to prove a theorem without a Wieferich type
criterion using the methods presented in this paper we would need to show that $v_p(q^{p-1}-1)<\frac{c\log q}{\log p}$, where
$c$ is a small (e.g. $c<2$) absolute constant, a result that seems to be far out of reach. 

It would be also interesting to get rid of the congruence condition that either $p$ or $q$ is $\equiv 3 \mod 4$. In particular, it would be interesting to prove the following
weaker form of Conjecture \ref{con:S-quadruple}:

\begin{conjecture}
 Let $p<q$ be primes and assume that $(p,q)$ is not a Wieferich pair. Then there exists no $\{p,q\}$-Diophantine quadruple.
\end{conjecture}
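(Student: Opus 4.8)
The plan is to organize the argument by the residues of $p$ and $q$ modulo $4$, deferring the genuinely new case to the end. When $p\equiv 3\mod 4$, the hypothesis that $(p,q)$ is not a Wieferich pair forces, a fortiori, that $(p,q)$ is not an extreme Wieferich pair in the sense of \eqref{eq:Wieferich-ext}; hence Theorem \ref{th:gen} applies directly and no $\{p,q\}$-Diophantine quadruple exists. When $q\equiv 3\mod 4$ (and $p\equiv 1\mod 4$) I would first prove the $q$-adic mirror of Theorem \ref{th:gen}, namely the forthcoming result announced in Section \ref{Sec:Remarks}. Since the quadratic-residue input of Lemma \ref{lem:quad_res} is available for any prime $\equiv 3\mod 4$, the structural Proposition \ref{prop:p=3mod4} and the whole case analysis of Sections \ref{Sec:Prime}--\ref{Sec:Wief_qadic} transfer after interchanging the roles of $p$ and $q$, again reducing matters to finitely many pairs that Algorithm \ref{algo} disposes of. This leaves the single hard regime $p\equiv q\equiv 1\mod 4$.

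For $p\equiv q\equiv 1\mod 4$ the non-Wieferich hypothesis still provides a foothold, since it forces $u_p=1$ or $u_q=1$, so at least one of the two valuation inputs driving Sections \ref{Sec:Wief_padic} and \ref{Sec:Wief_qadic} is available. Crucially, Proposition \ref{prop:bound_AB} survives verbatim, as its proof invokes only Lemmas \ref{lem:min_al}, \ref{lem:tripel} and \ref{lem:s4_low_bound} together with the linear-forms bounds, and never a congruence; likewise the valuation estimates behind Lemmas \ref{lem:p-adic-bound} and \ref{lem:q-adic-bound} rest only on Lemma \ref{lem:p-adic} and Proposition \ref{prop:bound_AB}, so they too remain valid when $p\equiv 1\mod 4$. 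The plan is therefore to re-derive, from the unit system \eqref{eq:SUnit} and repeated applications of Lemma \ref{lem:min_al}, a list of admissible exponent patterns playing the role of Table \ref{tab:p=3mod4}, and for each pattern to run the $L$-notation expansion of $a^2$ exactly as in Sections \ref{Sec:Wief_padic} and \ref{Sec:Wief_qadic}, using whichever Wieferich condition holds to force a contradiction; any surviving finite set of pairs would then be cleared by Algorithm \ref{algo}.

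The crux is Cases I and II of Proposition \ref{prop:bound_AB}. For $p\equiv 3\mod 4$ these were excluded in Corollary \ref{cor:bound_AB} through the relations of Lemma \ref{lem:quad_res}, which is exactly the ingredient that collapses when $-1$ is a quadratic residue modulo both $p$ and $q$. In those two cases Proposition \ref{prop:bound_AB} yields only the weak cubic bound $\max\{A\log p,B\log q\}\ll(\log p\log q)^3(\log\log p+\log\log q)^4$ recorded in the remark following Corollary \ref{cor:bound_23}, so a direct search via Algorithm \ref{algo} is hopeless, and it is not clear that the valuation-and-$a^2$ technique of Sections \ref{Sec:Wief_padic}--\ref{Sec:Wief_qadic} closes them. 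Removing this obstacle seems to demand a genuinely new idea: either a congruence-free argument excluding Cases I and II directly (for instance, exploiting the very rigid exponent patterns of those cases against Lemmas \ref{lem:s4_low_bound} and \ref{lem:q-adic-bound}), or a sharpening of the linear-forms-in-logarithms step that keeps the bound on $A$ and $B$ linear in $\log p\log q$ even when $p\equiv q\equiv 1\mod 4$. I expect this to be the decisive difficulty in any proof of the conjecture.
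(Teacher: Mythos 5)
You should first note that the statement you were asked to prove is not a theorem of the paper at all: it is the final, explicitly \emph{open} conjecture of Section \ref{Sec:Remarks}, stated there precisely because the author's methods do not reach it. So there is no proof in the paper to compare against, and the relevant question is whether your proposal actually closes the conjecture. It does not. Your reduction is sound in the case $p\equiv 3\bmod 4$ (not Wieferich implies not extreme Wieferich, so Theorem \ref{th:gen} applies), and your treatment of the case $q\equiv 3\bmod 4$ is at least plausible, though it silently assumes the $p\leftrightarrow q$ interchange of Sections \ref{Sec:Prime}--\ref{Sec:Wief_qadic} goes through; this is exactly the first conjecture of Section \ref{Sec:Remarks}, announced as forthcoming work, and it is not automatic, since the valuation lemmas (e.g.\ Lemma \ref{lem:p-adic-bound} with its hypothesis $q\geq 52038\log p$) and many size inequalities in the case analysis use the ordering $p<q$ in an essential way.

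The genuine gap is the one you yourself name: when $p\equiv q\equiv 1\bmod 4$, Lemma \ref{lem:quad_res} is unavailable, Cases I and II of Proposition \ref{prop:bound_AB} can no longer be excluded, and the only surviving bound on the exponents is the weak one $\max\{A\log p,B\log q\}\ll(\log p\log q)^3(\log\log p+\log\log q)^4$ from the remark after Corollary \ref{cor:bound_23}. Without the linear bound $52038\log p\log q$, the valuation arguments of Sections \ref{Sec:Wief_padic} and \ref{Sec:Wief_qadic} (which need $p^{v_p(q^{|\beta_*-\beta_\dagger|}\pm 1)}<q^2$ and its $q$-adic analog, both derived from that linear bound) collapse, and the finite search of Algorithm \ref{algo} is no longer over a bounded set of pairs. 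Your concluding paragraph candidly concedes that a ``genuinely new idea'' is needed here; that concession is accurate, and it means your proposal is a research plan with a correctly diagnosed obstruction, not a proof. In short: your analysis of where the difficulty lies agrees with the author's own assessment, but the conjecture remains open both in the paper and in your write-up.
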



\def\cprime{$'$}

\end{document}